\theoremstyle{definition}
\newtheorem{theorem}{Theorem}
\newtheorem{lemma}[theorem]{Lemma}
\newtheorem{proposition}[theorem]{Proposition}
\newtheorem{corollary}[theorem]{Corollary}
\numberwithin{equation}{section}
\numberwithin{theorem}{section}
\begin{document}

\begin{center}
{\bf{\Large Analogues of Jacobi's derivative formula III}}
\end{center}

\begin{center}
By Kazuhide Matsuda
\end{center}

\begin{center}
Faculty of Fundamental Science, National Institute of Technology, Niihama College,\\
7-1 Yagumo-chou, Niihama, Ehime, Japan 792-8580. \\
E-mail: matsuda@sci.niihama-nct.ac.jp  \\
Fax: 81-0897-37-7809
\end{center}

\noindent
{\bf Abstract}
In this paper,
we realize high-level versions of Jacobi's derivative formula to all the rational characteristics corresponding to level 
$k \,\,(k=3,4,5,6).$ 
For this purpose, 
we propose the method to obtain derivative formulas by means of the residue theorem. 
We believe that this method can be also applied to all the rational characteristics corresponding to level $k\ge 7.$ 
\newline
{\bf Key Words:} theta functions; rational characteristics; Jacobi's derivative formula; the residue theorem.
\newline
{\bf MSC(2010)}  14K25;  11E25

\section{Introduction}
Throughout this paper,
for each positive integer $k$
set $\zeta_k=\exp(2\pi i /k).$ 
For the positive integers $j,k,$ and $n,$
$d_{j,k}(n)$ denotes the number of positive divisors $d$ of $n$ such that $d\equiv j \,\,(\mathrm{mod} \,k),$
and
$d_{j,k}^{*}(n)$ denotes the number of positive divisors $d$ of $n$ such that $d\equiv j \,\,(\mathrm{mod} \,k)$ and $n/d$ is odd.
Moreover, let the upper half plane be defined by
$$
\mathbb{H}^2=\{ \tau \in\mathbb{C} \, | \, \Im \tau >0 \}.
$$
\par
Following Farkas and Kra \cite{Farkas-Kra},
we introduce the {\it theta function with characteristics,}
which is defined by
\begin{equation*}
\theta
\left[
\begin{array}{c}
\epsilon \\
\epsilon^{\prime}
\end{array}
\right] (\zeta, \tau) :=\sum_{n\in\mathbb{Z}} \exp
\left(2\pi i\left[ \frac12\left(n+\frac{\epsilon}{2}\right)^2 \tau+\left(n+\frac{\epsilon}{2}\right)\left(\zeta+\frac{\epsilon^{\prime}}{2}\right) \right] \right), 
\end{equation*}
where $\epsilon, \epsilon^{\prime}\in\mathbb{R}, \, \zeta\in\mathbb{C},$ and $\tau\in\mathbb{H}^{2}.$
The {\it theta constants} are given by
\begin{equation*}
\theta
\left[
\begin{array}{c}
\epsilon \\
\epsilon^{\prime}
\end{array}
\right]
:=
\theta
\left[
\begin{array}{c}
\epsilon \\
\epsilon^{\prime}
\end{array}
\right] (0, \tau).
\end{equation*}
Let us denote the {\it theta derivatives} by
\begin{equation*}
\theta^{\prime}
\left[
\begin{array}{c}
\epsilon \\
\epsilon^{\prime}
\end{array}
\right]
:=\left.
\frac{\partial}{\partial \zeta}
\theta
\left[
\begin{array}{c}
\epsilon \\
\epsilon^{\prime}
\end{array}
\right] (\zeta, \tau)
\right|_{\zeta=0}.
\end{equation*}
{\it Jacobi's derivative formula} is then given by
\begin{equation}
\label{eqn:Jacobi-derivative}
\theta^{\prime}
\left[
\begin{array}{c}
1 \\
1
\end{array}
\right]
=
-\pi
\theta
\left[
\begin{array}{c}
0 \\
0
\end{array}
\right]
\theta
\left[
\begin{array}{c}
1 \\
0
\end{array}
\right]
\theta
\left[
\begin{array}{c}
0 \\
1
\end{array}
\right].
\end{equation}
We note that Farkas and Kra's notation is different from Mumford's notation, 
\begin{equation}
\label{eqn:Mumford-theta}
\theta_{a,b}(z,\tau)
:=\sum_{n\in\mathbb{Z}} \exp
\left(\pi i\left(n+a\right)^2 \tau+2 \pi i\left(n+a\right)\left(z+b\right)  \right) \,\, \mathrm{for} \,\,z\in\mathbb{C} \,\,\mathrm{and} \,\,\tau\in\mathbb{H}^2.
\end{equation}
Farkas and Kra \cite{Farkas-Kra}
developed the theory of theta function with {\it rational characteristics},
i.e.,
the case where $\epsilon$ and $\epsilon^{\prime}$ are both rational numbers,
and 
applied the theory to number theory such as partition number. 
\par
In this paper,
we treat the following problem of Mumford \cite[pp. 117]{Mumford}:
 \begin{quotation}
Can Jacobi's formula be generalized, e.g., to
$$
\left(
\frac{\partial}{\partial z} \theta_{a,b}
\right)(0,\tau)
=
\left\{
\textrm{cubic polynomial in}
\,\,
\theta_{c,d}
\,\,
\textrm{'s}
\right\}
$$
for all $a,b \in\mathbb{Q}$?
Similarly, are there generalizations of Jacobi's formula with higher-order differential operators?
\end{quotation}
The aim of this paper is to deal with Mumford's problem for all the rational characteristics corresponding to level 
$k \,\,(k=3,4,5,6).$ While our derivative formulas are not cubic polynomials but rational expressions in theta constants,  
they yield many product-series identities.  
\par
Our motivation lies in ordinary differential equations (ODEs) satisfied by modular forms, 
whose typical example is Ramanujan's coupled system of ODEs, 
\begin{equation}
\label{eqn:Ramanujan-ODE}
q\frac{E_2}{dq}=\frac{(E_2)^2-E_4  }{12  }, \,\,
q\frac{E_4}{dq}=\frac{E_2 E_4-E_6  }{3  }, \,\,
q\frac{E_6}{dq}=\frac{E_2 E_6-(E_4)^2  }{2  }, 
\end{equation}
where 
\begin{align*}
E_2(q)=E_2(\tau)&:=1-24\sum_{n=1}^{\infty} \frac{nq^n}{1-q^n}, \,\,
E_4(q)=E_4(\tau):=1+240\sum_{n=1}^{\infty} \frac{n^3 q^n}{1-q^n}, \\
E_6(q)=E_4(\tau)&:=1-504\sum_{n=1}^{\infty} \frac{n^5 q^n}{1-q^n}, \,\,q=\exp(2 \pi i \tau).  
\end{align*}
In \cite{Matsuda4}, 
by means of Farkas and Kra's theory, 
the author derived coupled systems of ODEs satisfied by the cubic theta functions, 
\begin{align*}
a(q)=&\sum_{m,n\in\mathbb{Z}} q^{m^2+mn+n^2}, \,\,
b(q)=\sum_{m,n\in\mathbb{Z}} \omega^{n-m} q^{m^2+mn+n^2}, \\
c(q)=&\sum_{m,n\in\mathbb{Z}} q^{(n+\frac13)^2+(n+\frac13)(m+\frac13)+(m+\frac13)^2}, \,\,\omega=e^{\frac{2\pi i}{3}}, \,\,|q|<1.
\end{align*}
In \cite{Matsuda3}, 
applying the drivative formulas, he showed that 
some ratios of theta constants satisfy Riccati equations. 
The study of derivative formulas is expected to lead to ODEs satisfied by modular forms. 
\par
In \cite{Matsuda1}, 
the author expressed
$
\theta^{\prime}
\left[
\begin{array}{c}
1 \\
\frac12
\end{array}
\right],
\theta^{\prime}
\left[
\begin{array}{c}
1 \\
\frac14
\end{array}
\right],
$
\normalsize{
and
}
$
\theta^{\prime}
\left[
\begin{array}{c}
1 \\
\frac34
\end{array}
\right]
$
by the theta constants with rational characteristics.
For this purpose,
he used the following arithmetical formulas:
\begin{theorem}
(Sums of squares)
\label{thm:sums-of-squares}
{\it
For each positive integer $n\in\mathbb{N},$ the following holds: 
\begin{equation}
\label{eqn:2-squares}
S_2(n)=\sharp \{(x,y)\in\mathbb{Z}^2  \,| \, n=x^2+y^2 \}
        =4(d_{1,4}(n)-d_{3,4}(n))
\end{equation}
and
\begin{equation}
\label{eqn:1,2-squares}
S_{1,2}(n)=\sharp \{(x,y)\in\mathbb{Z}^2  \,| \, n=x^2+2y^2 \}
            =2(d_{1,8}(n)+d_{3,8}(n)-d_{5,8}(n)-d_{7,8}(n)).
\end{equation}
}
\end{theorem}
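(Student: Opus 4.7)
The plan is to establish both identities by computing the generating functions $\sum_n S_2(n) q^n$ and $\sum_n S_{1,2}(n) q^n$ in two different ways and then equating the results, following Jacobi's classical theta-function strategy.

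First, setting $q = e^{2\pi i \tau}$ and $\vartheta(\tau) := \sum_{n \in \mathbb{Z}} q^{n^2}$, I would observe that
\[
\vartheta(\tau)^2 = \sum_{n \geq 0} S_2(n)\, q^n, \qquad \vartheta(\tau)\vartheta(2\tau) = \sum_{n \geq 0} S_{1,2}(n)\, q^n,
\]
since each term of the product on the left corresponds to a representation on the right. This converts both problems into proving identities between $q$-series. Next, expanding $q^m/(1-q^m) = \sum_{\ell \geq 1} q^{m\ell}$ and grouping by residue class turns the right-hand sides into Lambert series: for (\ref{eqn:2-squares}),
\[
\sum_{n \geq 1} (d_{1,4}(n) - d_{3,4}(n))\, q^n = \sum_{m \geq 0} \left( \frac{q^{4m+1}}{1 - q^{4m+1}} - \frac{q^{4m+3}}{1 - q^{4m+3}} \right),
\]
with a completely analogous mod-$8$ decomposition for (\ref{eqn:1,2-squares}).

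The core of the argument is then to prove the two theta-Lambert identities, namely the classical
\[
\vartheta(\tau)^2 = 1 + 4 \sum_{m \geq 0} \left( \frac{q^{4m+1}}{1 - q^{4m+1}} - \frac{q^{4m+3}}{1 - q^{4m+3}} \right)
\]
together with its counterpart for $\vartheta(\tau)\vartheta(2\tau)$ obtained from the mod-$8$ Lambert series. Each can be derived either by logarithmic differentiation of the Jacobi triple product expansion of $\vartheta$, or, more in keeping with the methodology of the present paper, by applying the residue theorem on a period parallelogram to a carefully chosen quotient of theta functions whose divisor encodes the poles $q^{4m+1}$ and $q^{4m+3}$ with alternating residues. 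Comparing coefficients of $q^n$ on both sides then yields (\ref{eqn:2-squares}) and (\ref{eqn:1,2-squares}).

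The main obstacle is Step~3 for the $S_{1,2}$ identity. The two-squares case is standard and its residue-theoretic proof is well-documented, but for $\vartheta(\tau)\vartheta(2\tau)$ the appropriate elliptic function must have its quasi-periods simultaneously compatible with $\tau$ and $2\tau$, and selecting it so that the residue calculation reproduces precisely the signed sum $d_{1,8} + d_{3,8} - d_{5,8} - d_{7,8}$ on the right-hand side of (\ref{eqn:1,2-squares}) is the delicate combinatorial point. A natural candidate is a ratio involving $\theta\!\left[\begin{smallmatrix}1\\1/2\end{smallmatrix}\right]$-type theta functions, whose characteristics already single out the mod-$8$ structure, but verifying that the residues align correctly is the step that would require real work.
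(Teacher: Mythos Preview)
The paper does not actually prove this theorem: immediately after stating it, the author writes ``For the proof of Theorem~\ref{thm:sums-of-squares}, see \cite[pp.~56,74]{Berndt} or \cite[pp.~68]{Dickson}.'' So there is no in-paper argument to compare against; the result is imported as a known classical fact.

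Your outline is the standard generating-function/Lambert-series route and is essentially what one finds in Berndt. The reduction to the identities
\[
\vartheta(\tau)^2 = 1 + 4\sum_{m\ge 0}\Bigl(\tfrac{q^{4m+1}}{1-q^{4m+1}} - \tfrac{q^{4m+3}}{1-q^{4m+3}}\Bigr)
\]
and its $\vartheta(\tau)\vartheta(2\tau)$ analogue is correct, and the first is indeed classical. Your honest flag on the second identity is the right place to flag: the mod-$8$ Lambert series for $\vartheta(\tau)\vartheta(2\tau)$ does require a genuine argument, and your suggestion of a residue computation with a $\theta\!\left[\begin{smallmatrix}1\\1/2\end{smallmatrix}\right]$-type quotient is plausible but not carried out. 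In practice the cited references prove it by $q$-series manipulations (Ramanujan's ${}_1\psi_1$ or direct Lambert-series rearrangement) rather than a residue argument, so if you want a self-contained proof that is the path of least resistance; the residue approach you sketch would work too but is more labor than the problem warrants for a result the paper is content to cite.
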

\noindent
We note that Zemel \cite{Zemel} used binary quadratic forms, $x^2+y^2, x^2+2y^2$ and $x^2+xy+y^2$ 
to treat all the rational characteristics corresponding to level $k \,\, (k=3,4,6,8)$ and 
obtain derivative formulas, which are cubic polynomials in the theta constants with rational characteristics. 
For the proof of Theorem \ref{thm:sums-of-squares}, see \cite[pp. 56,74]{Berndt} or \cite[pp. 68]{Dickson}.
\par
In \cite{Matsuda2}, the author treated
the case where $\epsilon=1$ and $\epsilon^{\prime}\in\mathbb{Q}$, and obtained derivative formulas. 
For this purpose,
we mainly used Liouville's theorem:
\begin{theorem}
\label{thm-Liouville-elliptic-function}
(Liouville's theorem)
{\it
There exists no nonconstant elliptic function without poles, i.e.,
a holomorphic elliptic function is a constant.
}
\end{theorem}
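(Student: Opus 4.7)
The plan is to reduce the assertion to the classical Liouville theorem of complex analysis, which says that a bounded entire function on $\mathbb{C}$ must be constant. Let $f$ be an elliptic function with periods $\omega_1,\omega_2\in\mathbb{C}$ that are linearly independent over $\mathbb{R}$, and assume $f$ is holomorphic on $\mathbb{C}$, i.e., it has no poles. I would first pass from the global function to a fundamental parallelogram
$$
P:=\{s\omega_1+t\omega_2 \,|\, 0\le s,t\le 1\},
$$
which is a compact subset of $\mathbb{C}$.

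Next, since $f$ is assumed holomorphic on all of $\mathbb{C}$, its restriction to $P$ is continuous on a compact set, so there exists a real constant $M>0$ with $|f(z)|\le M$ for every $z\in P$. The key step is then to promote this local bound to a global bound using double periodicity: given any $w\in\mathbb{C}$, one can choose integers $m,n\in\mathbb{Z}$ so that the translate $z=w-m\omega_1-n\omega_2$ lies in $P$. Applying the periodicity relations $f(w)=f(w-m\omega_1)=f(w-m\omega_1-n\omega_2)=f(z)$ yields $|f(w)|\le M$ for all $w\in\mathbb{C}$. Consequently, $f$ is a bounded entire function, and the classical Liouville theorem forces $f$ to be constant.

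The only subtle point, and the place I would be most careful, is the setup of the fundamental parallelogram: one must use the closed parallelogram $P$ (not its interior) so that every $w\in\mathbb{C}$ is congruent modulo $\mathbb{Z}\omega_1+\mathbb{Z}\omega_2$ to a point of $P$. Boundary points of $P$ are represented more than once in this scheme, but this overlap is harmless because the argument only requires an upper bound on $|f|$. No real obstacle arises beyond this bookkeeping; the theorem is essentially a direct consequence of the compactness of the torus $\mathbb{C}/(\mathbb{Z}\omega_1+\mathbb{Z}\omega_2)$ combined with the Liouville theorem for entire functions.
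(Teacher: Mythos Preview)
Your proof is correct and is the standard argument for this classical result. Note, however, that the paper does not supply its own proof of Liouville's theorem: it merely states the result as background (alongside the residue theorem) and uses it as a tool, so there is no in-paper proof to compare against.
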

\noindent
\par
In this paper,
we obtain high-level versions of Jacobi's derivative formula to all the rational characteristics corresponding to level $k \,\,(k=3,4,5,6).$ 
Moreover, for level 8, we investigate derivative formulas from the viewpoint of the number of expressions of a positive integer $n$ as sums of squares, or sums of triangular numbers, 
where the {\it squares} are $x^2 \,\,(x\in\mathbb{Z})$ and the {\it triangular numbers} are $t_x=x(x+1)/2$ for $x\in\mathbb{Z}.$ 
For this purpose, we mainly use the residue theorem:
\begin{theorem}
\label{thm-fundamental-elliptic-function}
(The residue theorem)
{\it
The sum of all the residues of an elliptic function in the fundamental parallelogram is zero.
}
\end{theorem}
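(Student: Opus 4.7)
The plan is to reduce the claim to Cauchy's residue theorem from ordinary complex analysis, applied to a carefully chosen translate of the fundamental parallelogram. Let $f$ be an elliptic function with periods $\omega_1,\omega_2$ (linearly independent over $\R$), and let $P_a$ denote the closed parallelogram with vertices $a,\, a+\omega_1,\, a+\omega_1+\omega_2,\, a+\omega_2$. Since the poles of $f$ form a discrete subset of $\C$, I can choose $a\in\C$ so that no pole of $f$ lies on the boundary $\partial P_a$; because translation by a lattice vector does not alter the multiset of residues in a fundamental domain, it suffices to prove the statement for such a shifted parallelogram.

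Next, I would apply the classical residue theorem from complex analysis to the meromorphic function $f$ on $P_a$, obtaining
\begin{equation*}
\sum_{z_0 \in P_a} \res_{z=z_0} f(z) \;=\; \frac{1}{2\pi i} \oint_{\partial P_a} f(z)\, dz,
\end{equation*}
where the sum runs over the finitely many poles of $f$ inside $P_a$ (finiteness follows because poles are discrete and $P_a$ is compact). Split the contour integral into the four directed segments
\begin{equation*}
\oint_{\partial P_a} f(z)\, dz \;=\; \int_a^{a+\omega_1} f + \int_{a+\omega_1}^{a+\omega_1+\omega_2} f - \int_{a+\omega_2}^{a+\omega_1+\omega_2} f - \int_a^{a+\omega_2} f.
\end{equation*}

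I would then pair opposite sides and use double periodicity. For the horizontal pair, substituting $w=z-\omega_2$ in the third integral and invoking $f(w+\omega_2)=f(w)$ identifies it with the first integral, so their difference vanishes; an entirely analogous substitution $w=z-\omega_1$ in the fourth integral, combined with $f(w+\omega_1)=f(w)$, kills the remaining pair. Hence the boundary integral is zero, and the sum of residues inside $P_a$ vanishes, which is the desired conclusion. There is no real obstacle here beyond the initial choice of $a$; the only point requiring care is the pole-free boundary, and this is handled transparently by the translation argument at the start.
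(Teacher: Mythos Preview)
Your argument is correct and is the standard textbook proof of this classical fact: translate the fundamental parallelogram so that no pole sits on its boundary, apply Cauchy's residue theorem, and cancel the integrals along opposite sides using the double periodicity of $f$.

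The paper, however, does not prove this statement at all. Theorem~\ref{thm-fundamental-elliptic-function} is simply quoted in the introduction as a well-known tool from the theory of elliptic functions, alongside Liouville's theorem and the formulas on sums of squares; it is then invoked repeatedly in Sections~\ref{sec:derivative-level4}--\ref{sec:derivative-level8} to derive the derivative formulas. So there is no ``paper's own proof'' to compare against --- your proposal supplies a valid proof where the paper offers none.
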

\par
The remainder of this paper is organized as follows. Section \ref{sec:properties} reviews the properties of theta functions.
Sections \ref{sec:derivative-level4}, \ref{sec:derivative-level5}, \ref{sec:derivative-level6}, and \ref{sec:derivative-level3} treat derivative formulas to to all the rational characteristics corresponding to level 
$k=4,5,6,3.$ 
Section \ref{sec:derivative-level8} deals with derivative formulas to some rational characteristics corresponding  to level $k=8.$
\par
In particular, in Section \ref{sec:derivative-level6}
we derive the K\"ohler-Macdonald identity:
\begin{equation*}
\frac{\eta^5(2\tau)}{\eta^2(\tau)}
=
\sum_{n=1}^{\infty} (-1)^{n-1}n\left(\frac{n}{3}\right) \exp \left(\frac{2\pi i n^2\tau}{3} \right),
\end{equation*}
where 
the {\it Dedekind eta function} is defined by
\begin{equation*}
\eta(\tau)=q^{\frac{1}{24}} \prod_{n=1}^{\infty} (1-q^n), \,\,q=\exp(2\pi i \tau) \,\,\text{for} \,\,\tau\in\mathbb{H}^2,
\end{equation*}
and 
\begin{equation*}
\left(\frac{n}{3}\right)
=
\begin{cases}
+1, \,\,&\text{if}  \,\,n\equiv 1  \,\,(\mathrm{mod} \, 3),  \\
-1, \,\,&\text{if}  \,\,n\equiv -1 \,\,(\mathrm{mod} \, 3),  \\
0, \,\,&\text{if}  \,\,n\equiv 0 \,\,(\mathrm{mod} \, 3).  \\
\end{cases}
\end{equation*}
See the bibliographical notes of Farkas and Kra \cite[pp. 518-519]{Farkas-Kra}.
Further, in Section \ref{sec:derivative-level3}, we show a relation between derivative formulas and theta constant identities,
and in Section \ref{sec:derivative-level8},
we use relations between
sums of squares and sums of triangular numbers. 
It is noted that
Adiga et al. \cite{Adiga-Cooper-Han}, Barrucand et al. \cite{Barrucand-Cooper-Hirschhorn}, and Baruah et al. \cite{Baruah-Cooper-Hirschhorn}
derived explicit relations between
sums of squares and sums of triangular numbers.
\par
In this series of papers, 
we present three different methods for obtaining derivative formulas:
(1) formulas on expressions in sums of squares, (2) Liouville's theorem, and (3) the residue theorem. 
While the methods (1) and (2) cannnot be applied to all the rational characteristics, we believe that the method (3) can be applied to all the rational characteristics.

\section{Properties of the theta functions}
\label{sec:properties}

\subsection{Basic properties}
We first note that
for $m,n\in\mathbb{Z},$
\begin{equation}
\label{eqn:integer-char}
\theta
\left[
\begin{array}{c}
\epsilon \\
\epsilon^{\prime}
\end{array}
\right] (\zeta+n+m\tau, \tau) =
\exp \left\{2\pi i \left[\frac{n\epsilon-m\epsilon^{\prime}}{2}-m\zeta-\frac{m^2\tau}{2}\right] \right\}
\theta
\left[
\begin{array}{c}
\epsilon \\
\epsilon^{\prime}
\end{array}
\right] (\zeta,\tau)
\end{equation}
and
\begin{equation}
\theta
\left[
\begin{array}{c}
\epsilon +2m\\
\epsilon^{\prime}+2n
\end{array}
\right]
(\zeta,\tau)
=\exp(\pi i \epsilon n)
\theta
\left[
\begin{array}{c}
\epsilon \\
\epsilon^{\prime}
\end{array}
\right]
(\zeta,\tau).
\end{equation}
Furthermore,
it is easy to see that
\begin{equation*}
\theta
\left[
\begin{array}{c}
-\epsilon \\
-\epsilon^{\prime}
\end{array}
\right] (\zeta,\tau)
=
\theta
\left[
\begin{array}{c}
\epsilon \\
\epsilon^{\prime}
\end{array}
\right] (-\zeta,\tau)
\,\,
\mathrm{and}
\,\,
\theta^{\prime}
\left[
\begin{array}{c}
-\epsilon \\
-\epsilon^{\prime}
\end{array}
\right] (\zeta,\tau)
=
-
\theta^{\prime}
\left[
\begin{array}{c}
\epsilon \\
\epsilon^{\prime}
\end{array}
\right] (-\zeta,\tau).
\end{equation*}
\par
For $m,n\in\mathbb{R},$
we see that
\begin{align}
\label{eqn:real-char}
&\theta
\left[
\begin{array}{c}
\epsilon \\
\epsilon^{\prime}
\end{array}
\right] \left(\zeta+\frac{n+m\tau}{2}, \tau\right)   \notag\\
&=
\exp(2\pi i)\left[
-\frac{m\zeta}{2}-\frac{m^2\tau}{8}-\frac{m(\epsilon^{\prime}+n)}{4}
\right]
\theta
\left[
\begin{array}{c}
\epsilon+m \\
\epsilon^{\prime}+n
\end{array}
\right]
(\zeta,\tau).
\end{align}
We note that
$\theta
\left[
\begin{array}{c}
\epsilon \\
\epsilon^{\prime}
\end{array}
\right] \left(\zeta, \tau\right)$ has the only zero with order one in the fundamental parallelogram,
which is given by
$$
\zeta=\frac{1-\epsilon}{2}\tau+\frac{1-\epsilon^{\prime}}{2}.
$$

\subsection{Jacobi's triple product identity}

All the theta functions have infinite product expansions, which are given by
\begin{align}
\theta
\left[
\begin{array}{c}
\epsilon \\
\epsilon^{\prime}
\end{array}
\right] (\zeta, \tau) &=\exp\left(\frac{\pi i \epsilon \epsilon^{\prime}}{2}\right) x^{\frac{\epsilon^2}{4}} z^{\frac{\epsilon}{2}}    \notag  \\
                           &\quad
                           \displaystyle \prod_{n=1}^{\infty}(1-x^{2n})(1+e^{\pi i \epsilon^{\prime}} x^{2n-1+\epsilon} z)(1+e^{-\pi i \epsilon^{\prime}} x^{2n-1-\epsilon}/z),  \label{eqn:Jacobi-triple}
\end{align}
where $x=\exp(\pi i \tau)$ and $z=\exp(2\pi i \zeta).$

\subsection{Lemma of Farkas and Kra}

We recall the following lemma of Farkas and Kra \cite[pp. 78]{Farkas-Kra}.

\begin{lemma}
\label{lem:Farkas-Kra}
{\it
For
all characteristics
$
\left[
\begin{array}{c}
\epsilon \\
\epsilon^{\prime}
\end{array}
\right],
\left[
\begin{array}{c}
\delta \\
\delta^{\prime}
\end{array}
\right]
$
and
all $\tau\in\mathbb{H}^2,$
we have
\begin{align*}
&\theta
\left[
\begin{array}{c}
\epsilon \\
\epsilon^{\prime}
\end{array}
\right](0,\tau)
\theta
\left[
\begin{array}{c}
\delta \\
\delta^{\prime}
\end{array}
\right](0,\tau)  \\
=&
\theta
\left[
\begin{array}{c}
\frac{\epsilon+\delta}{2} \\
\epsilon^{\prime}+\delta^{\prime}
\end{array}
\right](0,2\tau)
\theta
\left[
\begin{array}{c}
\frac{\epsilon-\delta}{2} \\
\epsilon^{\prime}-\delta^{\prime}
\end{array}
\right](0,2\tau)
+
\theta
\left[
\begin{array}{c}
\frac{\epsilon+\delta}{2}+1 \\
\epsilon^{\prime}+\delta^{\prime}
\end{array}
\right](0,2\tau)
\theta
\left[
\begin{array}{c}
\frac{\epsilon-\delta}{2}+1 \\
\epsilon^{\prime}-\delta^{\prime}
\end{array}
\right](0,2\tau).
\end{align*}
}
\end{lemma}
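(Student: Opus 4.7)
The plan is to prove the identity by directly manipulating the series that define the two theta constants on the left. I would begin by writing the left-hand side as the double sum
$$
\sum_{n,m\in\mathbb{Z}} \exp\!\left(2\pi i\left[\tfrac{\tau}{2}\bigl((n+\tfrac{\epsilon}{2})^{2}+(m+\tfrac{\delta}{2})^{2}\bigr) +(n+\tfrac{\epsilon}{2})\tfrac{\epsilon'}{2}+(m+\tfrac{\delta}{2})\tfrac{\delta'}{2}\right]\right),
$$
and then perform the lattice substitution $N=n+m$, $M=n-m$. Its image is the sublattice $\{(N,M)\in\mathbb{Z}^{2}:N\equiv M\pmod 2\}$, and it is the natural substitution because it diagonalizes the quadratic form $(n+\tfrac{\epsilon}{2})^{2}+(m+\tfrac{\delta}{2})^{2}$ into a sum of two squares in the new coordinates after a completion of the square.

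The resulting sum then splits into two pieces according to the common parity of $N$ and $M$. In the even case I would set $N=2j$, $M=2l$ with $j,l\in\mathbb{Z}$; after completing the square the $\tau$-part of the exponent becomes
$$
\tau\left[\Bigl(j+\tfrac{\epsilon+\delta}{4}\Bigr)^{2}+\Bigl(l+\tfrac{\epsilon-\delta}{4}\Bigr)^{2}\right],
$$
and the linear part becomes
$$
\Bigl(j+\tfrac{\epsilon+\delta}{4}\Bigr)\tfrac{\epsilon'+\delta'}{2}+\Bigl(l+\tfrac{\epsilon-\delta}{4}\Bigr)\tfrac{\epsilon'-\delta'}{2}.
$$
Reading this as a product of two theta series with modular parameter $2\tau$ (the factor $2$ being absorbed into doubling $\tau$), I would recognize it as exactly the first term on the right-hand side of the lemma.

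In the odd case I would set $N=2j+1$, $M=2l+1$. The same completion of the square now introduces an additional shift of $\tfrac{1}{2}$ in each of the two half-integer offsets of $j$ and $l$. This shift is precisely the effect of replacing the characteristics $\tfrac{\epsilon\pm\delta}{2}$ by $\tfrac{\epsilon\pm\delta}{2}+1$, so this case yields the second term on the right-hand side. Summing the two cases reproduces the original double sum and gives the identity.

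The main obstacle I anticipate is purely algebraic bookkeeping: when completing the square in both the quadratic and the linear pieces, several constants of the form $\tfrac{\epsilon^{2}+\delta^{2}}{8}$, $\tfrac{\epsilon\epsilon'+\delta\delta'}{4}$, and their mixed analogues must cancel exactly so that no stray exponential prefactor survives on the right-hand side; the awkward cross-terms $\epsilon\delta'$ and $\delta\epsilon'$ in particular need to drop out. Verifying these cancellations, together with checking that the parity constraint $N\equiv M\pmod 2$ indeed partitions $\mathbb{Z}^{2}$ into exactly the two sub-cases above, is where care is needed. No analytic input beyond the absolute convergence of the series (which justifies the reindexing) is required.
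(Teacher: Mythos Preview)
The paper does not actually prove this lemma; it merely recalls it with a citation to Farkas and Kra \cite[p.~78]{Farkas-Kra}. Your proposed argument is correct and is essentially the classical proof (and indeed the one given in the cited reference): write the product as a double sum over $(n,m)\in\mathbb{Z}^{2}$, change variables to $N=n+m$, $M=n-m$, and split according to the common parity of $N$ and $M$.

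Your worry about stray prefactors is unnecessary. Once you write $a=n+\tfrac{\epsilon}{2}$, $b=m+\tfrac{\delta}{2}$ and use
\[
a^{2}+b^{2}=\tfrac12\bigl((a+b)^{2}+(a-b)^{2}\bigr),\qquad
a\epsilon'+b\delta'=\tfrac12\bigl((a+b)(\epsilon'+\delta')+(a-b)(\epsilon'-\delta')\bigr),
\]
the exponent separates \emph{exactly} into a $(N,\epsilon+\delta,\epsilon'+\delta')$ piece and an $(M,\epsilon-\delta,\epsilon'-\delta')$ piece, with no leftover constants and no mixed terms $\epsilon\delta'$, $\delta\epsilon'$ surviving. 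Setting $N=2j$ (resp.\ $N=2j+1$) turns $\tfrac{\tau}{4}(N+\tfrac{\epsilon+\delta}{2})^{2}$ into $\tau(j+\tfrac{\epsilon+\delta}{4})^{2}$ (resp.\ $\tau(j+\tfrac{\epsilon+\delta}{4}+\tfrac12)^{2}$), which is precisely the quadratic part of a theta series with modulus $2\tau$ and top characteristic $\tfrac{\epsilon+\delta}{2}$ (resp.\ $\tfrac{\epsilon+\delta}{2}+1$); the linear part matches in the same way. So the two parity cases give exactly the two products on the right-hand side, and nothing further needs to cancel.
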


\section{Derivative formulas of level 4}
\label{sec:derivative-level4}

From the discussion of Farkas and Kra \cite[pp. 185-187]{Farkas-Kra},
we have only to consider the case where
\begin{equation*}
(\epsilon, \epsilon^{\prime})=(1,1/2), (0,1/2), (1/2,0), (1/2,1), (1/2,1/2), (1/2,3/2).
\end{equation*}

\subsection{Derivative formulas for $(\epsilon, \epsilon^{\prime})=(1,1/2), (0,1/2)$}

\begin{theorem}
\label{thm:analogue-Jacobi-(1,1/2), (0,1/2)}
{\it
For every $\tau\in\mathbb{H}^2,$ we have
\begin{equation}
\label{eqn-derivative-(1,1/2)}
\theta^{\prime}
\left[
\begin{array}{c}
1 \\
\frac12
\end{array}
\right](0,\tau)
=
\frac14
\frac
{
\theta^{\prime}
\left[
\begin{array}{c}
1 \\
1
\end{array}
\right](0,\tau)
\theta^3
\left[
\begin{array}{c}
1 \\
0
\end{array}
\right](0,\tau)
}
{
\theta^3
\left[
\begin{array}{c}
1 \\
\frac12
\end{array}
\right](0,\tau)
}
=
-\pi
\theta^2
\left[
\begin{array}{c}
0 \\
0
\end{array}
\right](0,2\tau)
\theta
\left[
\begin{array}{c}
1 \\
\frac12
\end{array}
\right](0,\tau)
\end{equation}
and
\begin{equation}
\label{eqn-derivative-(0,1/2)}
\theta^{\prime}
\left[
\begin{array}{c}
0 \\
\frac12
\end{array}
\right](0,\tau)
=
\frac14
\frac
{
\theta^{\prime}
\left[
\begin{array}{c}
1 \\
1
\end{array}
\right](0,\tau)
\theta^3
\left[
\begin{array}{c}
1 \\
0
\end{array}
\right](0,\tau)
}
{
\theta^3
\left[
\begin{array}{c}
0 \\
\frac12
\end{array}
\right](0,\tau)
}
=
-\pi
\theta^2
\left[
\begin{array}{c}
1 \\
0
\end{array}
\right](0,2\tau)
\theta
\left[
\begin{array}{c}
0 \\
\frac12
\end{array}
\right](0,\tau).
\end{equation}
}
\end{theorem}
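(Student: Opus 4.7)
The plan is to deduce the second equality in each formula by differentiating, at $\zeta=0$, a $\zeta$-dependent strengthening of Lemma \ref{lem:Farkas-Kra}; the first equality then follows from Jacobi's formula (\ref{eqn:Jacobi-derivative}) combined with purely algebraic manipulations of further $\zeta=0$ instances of Lemma \ref{lem:Farkas-Kra}.

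For (\ref{eqn-derivative-(1,1/2)}), the starting point is the extended identity
\begin{align*}
\theta\!\left[\begin{array}{c}1\\1/2\end{array}\right]\!(\zeta,\tau)^{2}
=&\ \theta\!\left[\begin{array}{c}0\\0\end{array}\right]\!(0,2\tau)\,\theta\!\left[\begin{array}{c}1\\1\end{array}\right]\!(2\zeta,2\tau)\\
&+\theta\!\left[\begin{array}{c}1\\0\end{array}\right]\!(0,2\tau)\,\theta\!\left[\begin{array}{c}0\\1\end{array}\right]\!(2\zeta,2\tau).
\end{align*}
I would prove this by checking, via (\ref{eqn:integer-char}), that both sides are quasi-periodic in $\zeta$ with identical multipliers, so their ratio is elliptic; then, using (\ref{eqn:real-char}) to compute $\theta\left[1;1\right](1/2,2\tau)=-\theta\left[1;0\right](0,2\tau)$ and $\theta\left[0;1\right](1/2,2\tau)=\theta\left[0;0\right](0,2\tau)$, verifying that the right-hand side and its $\zeta$-derivative both vanish at $\zeta=1/4$, matching the double zero of $\theta\left[1;1/2\right]^{2}$. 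The ratio is therefore a holomorphic elliptic function of $\zeta$, hence constant by Theorem \ref{thm-Liouville-elliptic-function}; evaluating at $\zeta=0$ and using Lemma \ref{lem:Farkas-Kra} pins that constant at $1$.

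Differentiating once in $\zeta$ and setting $\zeta=0$, the $\theta\left[0;1\right](2\zeta,2\tau)$ term drops out because $\theta\left[0;1\right]$ is an even function of $\zeta$, leaving
$$
\theta\!\left[\begin{array}{c}1\\1/2\end{array}\right]\!(0,\tau)\,\theta^{\prime}\!\left[\begin{array}{c}1\\1/2\end{array}\right]\!(0,\tau)
=\theta\!\left[\begin{array}{c}0\\0\end{array}\right]\!(0,2\tau)\,\theta^{\prime}\!\left[\begin{array}{c}1\\1\end{array}\right]\!(0,2\tau).
$$
Substituting $\theta^{\prime}\left[1;1\right](0,2\tau)$ via (\ref{eqn:Jacobi-derivative}) applied at $2\tau$, invoking the $\zeta=0$ identity $\theta\left[1;1/2\right]^{2}=\theta\left[1;0\right](0,2\tau)\theta\left[0;1\right](0,2\tau)$ supplied by Lemma \ref{lem:Farkas-Kra}, and dividing by $\theta\left[1;1/2\right]$ gives the second equality of (\ref{eqn-derivative-(1,1/2)}). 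The first equality reduces, using (\ref{eqn:Jacobi-derivative}) at $\tau$, to the theta-constant identity $\theta\left[0;0\right]\theta\left[1;0\right]^{4}\theta\left[0;1\right]=4\theta\left[0;0\right](0,2\tau)^{2}\theta\left[1;1/2\right]^{4}$, which in turn follows from a short combination of the $\zeta=0$ Farkas--Kra identities for $\theta\left[1;0\right]^{2}$, $\theta\left[0;0\right]\theta\left[0;1\right]$, and $\theta\left[1;1/2\right]^{2}$.

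The argument for (\ref{eqn-derivative-(0,1/2)}) is entirely parallel, starting from the extended identity $\theta\left[0;1/2\right](\zeta,\tau)^{2}=\theta\left[0;0\right](0,2\tau)\theta\left[0;1\right](2\zeta,2\tau)+\theta\left[1;0\right](0,2\tau)\theta\left[1;1\right](2\zeta,2\tau)$ and the $\zeta=0$ companion $\theta\left[0;1/2\right]^{2}=\theta\left[0;0\right](0,2\tau)\theta\left[0;1\right](0,2\tau)$; differentiation at $\zeta=0$ yields $\theta\left[0;1/2\right]\theta^{\prime}\left[0;1/2\right]=\theta\left[1;0\right](0,2\tau)\theta^{\prime}\left[1;1\right](0,2\tau)$, and the same reduction gives the stated formula. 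I expect the principal obstacle to lie in the clean verification of the extended, $\zeta$-dependent identities --- specifically, confirming that the two summands on the right conspire, via the cancellations at $\zeta=1/4$ (respectively $\zeta=\tau/2+1/4$), to produce a double zero matching the left-hand side, so that the ratio is genuinely holomorphic and Theorem \ref{thm-Liouville-elliptic-function} applies; once this is settled, the remaining steps are bookkeeping.
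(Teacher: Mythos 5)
Your proposal is correct, but it takes a genuinely different route from the paper. The paper proves this theorem with the residue theorem (Theorem \ref{thm-fundamental-elliptic-function}): it forms the elliptic function $\varphi(z)=\theta^{3}\!\left[\begin{smallmatrix}1\\1\end{smallmatrix}\right]\!(z,\tau)\big/\bigl(\theta^{2}\!\left[\begin{smallmatrix}1\\\frac12\end{smallmatrix}\right]\!(z,\tau)\,\theta\!\left[\begin{smallmatrix}1\\0\end{smallmatrix}\right]\!(z,\tau)\bigr)$, whose only poles in the fundamental parallelogram are $z=\tfrac14$ and $z=\tfrac12$, and equates the sum of the two residues to zero; this yields the first equality of \eqref{eqn-derivative-(1,1/2)} directly, and the second equality is then extracted from the triple product identity \eqref{eqn:Jacobi-triple} (similarly for \eqref{eqn-derivative-(0,1/2)}). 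You instead use the $\zeta$-dependent degree-two addition theorem, of which Lemma \ref{lem:Farkas-Kra} is the $\zeta=0$ case, differentiate at $\zeta=0$ (where the even term drops out), and apply Jacobi's formula \eqref{eqn:Jacobi-derivative} at $2\tau$. I checked the pieces: your extended identity is exactly the Farkas--Kra addition formula with $\epsilon=\delta=1$, $\epsilon'=\delta'=\tfrac12$ and both arguments equal to $\zeta$; the differentiated relation $\theta\!\left[\begin{smallmatrix}1\\\frac12\end{smallmatrix}\right]\theta'\!\left[\begin{smallmatrix}1\\\frac12\end{smallmatrix}\right]=\theta\!\left[\begin{smallmatrix}0\\0\end{smallmatrix}\right]\!(0,2\tau)\,\theta'\!\left[\begin{smallmatrix}1\\1\end{smallmatrix}\right]\!(0,2\tau)$ is right; and the closing theta-constant identity $\theta\!\left[\begin{smallmatrix}0\\0\end{smallmatrix}\right]\theta\!\left[\begin{smallmatrix}0\\1\end{smallmatrix}\right]\theta^{4}\!\left[\begin{smallmatrix}1\\0\end{smallmatrix}\right]=4\,\theta^{2}\!\left[\begin{smallmatrix}0\\0\end{smallmatrix}\right]\!(0,2\tau)\,\theta^{4}\!\left[\begin{smallmatrix}1\\\frac12\end{smallmatrix}\right]$ does follow from the three $\zeta=0$ instances of Lemma \ref{lem:Farkas-Kra} you cite, since $\theta^{2}\!\left[\begin{smallmatrix}1\\0\end{smallmatrix}\right]=2\theta\!\left[\begin{smallmatrix}0\\0\end{smallmatrix}\right]\!(0,2\tau)\theta\!\left[\begin{smallmatrix}1\\0\end{smallmatrix}\right]\!(0,2\tau)$, $\theta\!\left[\begin{smallmatrix}0\\0\end{smallmatrix}\right]\theta\!\left[\begin{smallmatrix}0\\1\end{smallmatrix}\right]=\theta^{2}\!\left[\begin{smallmatrix}0\\1\end{smallmatrix}\right]\!(0,2\tau)$, and $\theta^{2}\!\left[\begin{smallmatrix}1\\\frac12\end{smallmatrix}\right]=\theta\!\left[\begin{smallmatrix}1\\0\end{smallmatrix}\right]\!(0,2\tau)\theta\!\left[\begin{smallmatrix}0\\1\end{smallmatrix}\right]\!(0,2\tau)$. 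The parallel computation for $\left[\begin{smallmatrix}0\\\frac12\end{smallmatrix}\right]$ is equally sound. As for what each approach buys: yours reaches the $2\tau$-form of the answer first and explains structurally why it is so clean, but it leans on the fact that $\epsilon'=\tfrac12$ is $2$-torsion, so the doubled arguments land back on integer characteristics; it would not carry over to the level $3$, $5$, $6$ characteristics of the later sections, whereas the residue-theorem argument is the uniform engine the paper uses throughout.
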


\begin{proof}
Consider the following elliptic functions:
\begin{equation*}
\varphi(z)
=
\frac
{
\theta^3
\left[
\begin{array}{c}
1 \\
1
\end{array}
\right](z, \tau)
}
{
\theta^2
\left[
\begin{array}{c}
1 \\
\frac12
\end{array}
\right](z, \tau)
\theta
\left[
\begin{array}{c}
1 \\
0
\end{array}
\right](z, \tau)
}
\,\,
\mathrm{and}
\,\,
\psi(z)=
\frac
{
\theta^3
\left[
\begin{array}{c}
1 \\
1
\end{array}
\right](z, \tau)
}
{
\theta^2
\left[
\begin{array}{c}
0 \\
\frac12
\end{array}
\right](z, \tau)
\theta
\left[
\begin{array}{c}
1 \\
0
\end{array}
\right](z, \tau)
}.
\end{equation*}
\par
We first note that
in the fundamental parallelogram,
the poles of $\varphi(z)$ are $z=1/4$ and $z=1/2.$
Direct calculation yields
\begin{equation*}
\mathrm{Res}\left(\varphi(z), \frac14\right)
=
4
\frac
{
\theta
\left[
\begin{array}{c}
1 \\
\frac12
\end{array}
\right]
\theta^{\prime}
\left[
\begin{array}{c}
1 \\
\frac12
\end{array}
\right]
}
{
\theta^{\prime}
\left[
\begin{array}{c}
1 \\
1
\end{array}
\right]^2
}
\,\,
\mathrm{and}
\,\,
\mathrm{Res}\left(\varphi(z), \frac12 \right)
=
-
\frac
{
\theta^3
\left[
\begin{array}{c}
1 \\
0
\end{array}
\right]
}
{
\theta^2
\left[
\begin{array}{c}
1 \\
\frac12
\end{array}
\right]
\theta^{\prime}
\left[
\begin{array}{c}
1 \\
1
\end{array}
\right]
}.
\end{equation*}
Since
$
\mathrm{Res}\left(\varphi(z), \frac14\right)+\mathrm{Res}\left(\varphi(z), \frac12 \right)=0,
$
Eq. (\ref{eqn-derivative-(1,1/2)}) can be obtained.
The second equality follows from Jacobi's triple product identity (\ref{eqn:Jacobi-triple}).
\par
Eq. (\ref{eqn-derivative-(0,1/2)}) can be obtained from $\psi(z)$ in the same way.
\end{proof}

\subsection{Derivative formulas for $(\epsilon, \epsilon^{\prime})=(1/2,1), (1/2, 0)$}

\begin{theorem}
\label{thm:analogue-Jacobi-1/2-1,0}
{\it
For every $\tau\in\mathbb{H}^2,$ we have
\begin{equation*}
\theta^{\prime}
\left[
\begin{array}{c}
\frac12 \\
1
\end{array}
\right]
=
\frac{\zeta_4}{4}
\frac
{
\theta^{\prime}
\left[
\begin{array}{c}
1 \\
1
\end{array}
\right]
\theta^3
\left[
\begin{array}{c}
0 \\
1
\end{array}
\right]
}
{
\theta^3
\left[
\begin{array}{c}
\frac12 \\
1
\end{array}
\right]
}
\,\,
\text{and}
\,\,
\theta^{\prime}
\left[
\begin{array}{c}
\frac12 \\
0
\end{array}
\right]
=
\frac{\zeta_4^3}{4}
\frac
{
\theta^{\prime}
\left[
\begin{array}{c}
1 \\
1
\end{array}
\right]
\theta^3
\left[
\begin{array}{c}
0 \\
1
\end{array}
\right]
}
{
\theta^3
\left[
\begin{array}{c}
\frac12 \\
0
\end{array}
\right]
}.
\end{equation*}
}
\end{theorem}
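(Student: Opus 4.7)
The plan is to parallel the proof of Theorem \ref{thm:analogue-Jacobi-(1,1/2), (0,1/2)}. I would introduce the two elliptic functions
\[
\varphi(z) = \frac{\theta^3\!\left[\begin{array}{c}1\\1\end{array}\right]\!(z,\tau)}{\theta^2\!\left[\begin{array}{c}\frac12\\1\end{array}\right]\!(z,\tau)\,\theta\!\left[\begin{array}{c}0\\1\end{array}\right]\!(z,\tau)},\qquad
\psi(z) = \frac{\theta^3\!\left[\begin{array}{c}1\\1\end{array}\right]\!(z,\tau)}{\theta^2\!\left[\begin{array}{c}\frac12\\0\end{array}\right]\!(z,\tau)\,\theta\!\left[\begin{array}{c}0\\1\end{array}\right]\!(z,\tau)},
\]
and apply Theorem \ref{thm-fundamental-elliptic-function}. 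Using the quasi-periodicity \eqref{eqn:integer-char}, a routine check shows that in both cases the cubic exponential multiplier coming from the numerator exactly cancels the combined multiplier from the denominator, so $\varphi$ and $\psi$ are doubly periodic. Reading off the divisor from the known zero locations, each function has a triple zero at $z=0$, a double pole from the squared denominator factor (at $z=\tau/4$ for $\varphi$ and at $z=\tau/4+\tfrac12$ for $\psi$), and a simple pole at $z=\tau/2$ coming from the $\theta\!\left[\begin{array}{c}0\\1\end{array}\right]$ in the denominator.

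The simple-pole residue is computed exactly as in Theorem \ref{thm:analogue-Jacobi-(1,1/2), (0,1/2)}: writing $z=u+\tau/2$ and applying the half-shift formula \eqref{eqn:real-char}, the quasi-periodic phase factors collapse to an overall $\pm 1$, and invoking the symmetry $\theta[-\epsilon,-\epsilon'](\zeta)=\theta[\epsilon,\epsilon'](-\zeta)$ to replace the shifted characteristic $(-\tfrac12,\epsilon')$ by $(\tfrac12,\epsilon')$ produces a clean multiple of $\theta^3[0,1]/\bigl(\theta^2[\tfrac12,\epsilon']\,\theta'[1,1]\bigr)$. The double-pole residue is the technical heart. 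For $\varphi$ at $z=u+\tau/4$, the half-shift formula turns the function into $\theta^3[\tfrac32,1](u)\big/\bigl(\theta^2[1,1](u)\,\theta[\tfrac12,1](u)\bigr)$, again with all overall exponentials cancelling. Since $\theta[1,1]$ is odd in $\zeta$, $\theta[1,1](u)=\theta'[1,1]\,u+O(u^3)$, so the residue reduces to $(\theta'[1,1])^{-2}$ times the value of $\tfrac{d}{du}\bigl[\theta^3[\tfrac32,1](u)\big/\theta[\tfrac12,1](u)\bigr]$ at $u=0$, which I would expand by the quotient rule.

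The final step uses $\theta[\tfrac32,1](0)=-i\,\theta[\tfrac12,1](0)$ and $\theta'[\tfrac32,1](0)=i\,\theta'[\tfrac12,1](0)$, both consequences of \eqref{eqn:integer-char} together with the symmetry above; these cause the two terms in the quotient rule to add rather than cancel, yielding a numerator of $-4i\,\theta^3[\tfrac12,1]\,\theta'[\tfrac12,1]$. Setting the sum of residues to zero and solving gives the first identity with constant $\zeta_4/4$, since $-1/(4i)=i/4=\zeta_4/4$. The calculation for $\psi$ is entirely parallel but with two differences: translation to $z=u+\tau/4+\tfrac12$ picks up an additional factor $\exp(-\pi i/2)=-i$ that does not cancel, and the symmetry reduction at $\epsilon'=0$ gives $\theta[-\tfrac12,0](0)=+\theta[\tfrac12,0](0)$ without an $-i$ in front. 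Together these convert the double-pole residue to $+4i$ in place of $-4i$, so the reciprocal $1/(4i)=-i/4=\zeta_4^3/4$ yields the second identity. I expect the main obstacle to be the careful tracking of these half-integer phase factors through the double-pole computation; once the $\pm i$ and $\pm 1$ prefactors are correctly identified, everything else is routine residue calculus.
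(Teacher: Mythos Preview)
Your proposal is correct and is exactly the approach the paper takes: the paper introduces the very same elliptic functions
\[
\varphi(z)=\frac{\theta^3\!\left[\begin{smallmatrix}1\\1\end{smallmatrix}\right](z,\tau)}{\theta^2\!\left[\begin{smallmatrix}1/2\\1\end{smallmatrix}\right](z,\tau)\,\theta\!\left[\begin{smallmatrix}0\\1\end{smallmatrix}\right](z,\tau)},\qquad
\psi(z)=\frac{\theta^3\!\left[\begin{smallmatrix}1\\1\end{smallmatrix}\right](z,\tau)}{\theta^2\!\left[\begin{smallmatrix}1/2\\0\end{smallmatrix}\right](z,\tau)\,\theta\!\left[\begin{smallmatrix}0\\1\end{smallmatrix}\right](z,\tau)},
\]
and refers back to the residue computation of Theorem~\ref{thm:analogue-Jacobi-(1,1/2), (0,1/2)}. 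Your detailed tracking of the phase factors (the cancellation of exponentials for $\varphi$, the extra $-i$ for $\psi$, and the identities $\theta[\tfrac32,1](0)=-i\,\theta[\tfrac12,1](0)$, $\theta[\tfrac32,0](0)=\theta[\tfrac12,0](0)$) is precisely the ``direct calculation'' the paper leaves implicit.
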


\begin{proof}
Consider the following elliptic functions:
\begin{equation*}
\varphi(z)
=
\frac
{
\theta^3
\left[
\begin{array}{c}
1 \\
1
\end{array}
\right](z, \tau)
}
{
\theta^2
\left[
\begin{array}{c}
\frac12 \\
1
\end{array}
\right](z, \tau)
\theta
\left[
\begin{array}{c}
0 \\
1
\end{array}
\right](z, \tau)
}
\,\,
\mathrm{and}
\,\,
\psi(z)=
\frac
{
\theta^3
\left[
\begin{array}{c}
1 \\
1
\end{array}
\right](z, \tau)
}
{
\theta^2
\left[
\begin{array}{c}
\frac12 \\
0
\end{array}
\right](z, \tau)
\theta
\left[
\begin{array}{c}
0 \\
1
\end{array}
\right](z, \tau)
}.
\end{equation*}
The theorem can be proved in the same way as Theorem \ref{thm:analogue-Jacobi-(1,1/2), (0,1/2)}.
\end{proof}

\subsection{Derivative formulas for $(\epsilon, \epsilon^{\prime})=(1/2,1/2),  (1/2,3/2)$}

\begin{theorem}
\label{thm:analogue-Jacobi-1/2-1/2,3/2}
{\it
For every $\tau\in\mathbb{H}^2,$ we have
\begin{equation*}
\theta^{\prime}
\left[
\begin{array}{c}
\frac12 \\
\frac12
\end{array}
\right]
=
\frac14
\frac
{
\theta^{\prime}
\left[
\begin{array}{c}
1 \\
1
\end{array}
\right]
\theta^3
\left[
\begin{array}{c}
0 \\
0
\end{array}
\right]
}
{
\theta^3
\left[
\begin{array}{c}
\frac12 \\
\frac12
\end{array}
\right]
}
\,\,
\text{and}
\,\,
\theta^{\prime}
\left[
\begin{array}{c}
\frac12 \\
\frac32
\end{array}
\right]
=
\frac{-1}{4}
\frac
{
\theta^{\prime}
\left[
\begin{array}{c}
1 \\
1
\end{array}
\right]
\theta^3
\left[
\begin{array}{c}
0 \\
0
\end{array}
\right]
}
{
\theta^3
\left[
\begin{array}{c}
\frac12 \\
\frac32
\end{array}
\right]
}.
\end{equation*}
}
\end{theorem}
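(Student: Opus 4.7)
The plan is to follow the residue-theorem strategy of Theorems~\ref{thm:analogue-Jacobi-(1,1/2), (0,1/2)} and~\ref{thm:analogue-Jacobi-1/2-1,0}. Specifically, I would work with the two elliptic functions
\begin{equation*}
\varphi(z) = \frac{\theta^3\left[\begin{array}{c} 1 \\ 1 \end{array}\right](z, \tau)}{\theta^2\left[\begin{array}{c} \frac12 \\ \frac12 \end{array}\right](z, \tau)\, \theta\left[\begin{array}{c} 0 \\ 0 \end{array}\right](z, \tau)}, \quad \psi(z) = \frac{\theta^3\left[\begin{array}{c} 1 \\ 1 \end{array}\right](z, \tau)}{\theta^2\left[\begin{array}{c} \frac12 \\ \frac32 \end{array}\right](z, \tau)\, \theta\left[\begin{array}{c} 0 \\ 0 \end{array}\right](z, \tau)}.
\end{equation*}
A direct application of the quasi-periodicity rule~(\ref{eqn:integer-char}) shows that the exponential factors in the numerator and denominator cancel, so both $\varphi$ and $\psi$ are elliptic functions on $\mathbb{C}/(\mathbb{Z} + \mathbb{Z}\tau)$.

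Next I would identify the poles in the fundamental parallelogram. For $\varphi$, the theta zero locations give a double pole at $z_1 = (1 + \tau)/4$ (the zero of $\theta[\tfrac12, \tfrac12]$) and a simple pole at $z_2 = (1 + \tau)/2$ (the zero of $\theta[0, 0]$); for $\psi$, the double pole moves to $z_1' = (\tau - 1)/4$ while $z_2$ is unchanged. To compute each residue I would apply the quarter-period shift identity~(\ref{eqn:real-char}) with $(m, n) = (\tfrac12, \tfrac12)$ at $z_1$ and $(m, n) = (\tfrac12, -\tfrac12)$ at $z_1'$, together with the half-period shift $(m, n) = (1, 1)$ at $z_2$. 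These reduce every theta factor to an explicit exponential prefactor times a theta function centered at $\zeta = 0$, at which point the odd-function Taylor expansion $\theta[1, 1](\zeta, \tau) = \theta'[1, 1]\zeta + O(\zeta^3)$ supplies the simple zero whose square cancels the double pole. Summing the two residues and invoking Theorem~\ref{thm-fundamental-elliptic-function} will then yield the claimed identities after clearing denominators.

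The main obstacle is bookkeeping the phase factors produced by~(\ref{eqn:real-char}) together with the characteristic-reduction rule $\theta[\epsilon + 2m, \epsilon' + 2n] = \exp(\pi i \epsilon n)\, \theta[\epsilon, \epsilon']$ and the parity identity $\theta[-\epsilon,-\epsilon'](\zeta,\tau)=\theta[\epsilon,\epsilon'](-\zeta,\tau)$. In particular, shifting $\theta[1, 1]$ by $z_1$ produces $\theta[\tfrac32, \tfrac32]$, which must be rewritten as $-i\,\theta[\tfrac12, \tfrac12](-\zeta, \tau)$ before the computation closes, and the analogous reduction at $z_1'$ rewrites $\theta[\tfrac32, \tfrac12]$ as $-i\,\theta[\tfrac12, \tfrac32](-\zeta, \tau)$. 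The full phase ratio among the shifted theta factors must then be shown to collapse to a clean constant: $+1$ in the $\varphi$ case and $-1$ in the $\psi$ case, which is precisely what produces the opposite signs in the two formulas. A minor but essential subtlety is that at the double pole one must differentiate $w^2 \varphi(z_1 + w)$ once before evaluating at $w = 0$; the odd symmetry of $\theta[1, 1]$ kills the $O(w^2)$ correction in $\theta^2[1, 1](w)$, so the residue reduces to the elementary identity $\frac{d}{dw}[\theta^3(-w)/\theta(w)]|_{w=0} = -4\,\theta\,\theta'$ applied to $\theta = \theta[\tfrac12, \tfrac12]$ or $\theta[\tfrac12, \tfrac32]$, producing the factor $4$ in the final formulas.
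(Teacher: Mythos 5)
Your proposal is correct and coincides with the paper's own proof: the paper uses exactly the same two elliptic functions $\varphi$ and $\psi$ and reduces the claim to the residue-theorem computation of Theorem \ref{thm:analogue-Jacobi-(1,1/2), (0,1/2)}. Your pole locations, phase bookkeeping (overall constant $+1$ for $\varphi$, $-1$ for $\psi$), and the double-pole residue $-4\,\theta\,\theta'/\theta^{\prime}\left[\begin{smallmatrix}1\\1\end{smallmatrix}\right]^2$ all check out and in fact supply more detail than the paper records.
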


\begin{proof}
Consider the following elliptic functions:
\begin{equation*}
\varphi(z)
=
\frac
{
\theta^3
\left[
\begin{array}{c}
1 \\
1
\end{array}
\right](z, \tau)
}
{
\theta^2
\left[
\begin{array}{c}
\frac12 \\
\frac12
\end{array}
\right](z, \tau)
\theta
\left[
\begin{array}{c}
0 \\
0
\end{array}
\right](z, \tau)
}
\,\,
\mathrm{and}
\,\,
\psi(z)=
\frac
{
\theta^3
\left[
\begin{array}{c}
1 \\
1
\end{array}
\right](z, \tau)
}
{
\theta^2
\left[
\begin{array}{c}
\frac12 \\
\frac32
\end{array}
\right](z, \tau)
\theta
\left[
\begin{array}{c}
0 \\
0
\end{array}
\right](z, \tau)
}.
\end{equation*}
The theorem can be proved in the same way as Theorem \ref{thm:analogue-Jacobi-(1,1/2), (0,1/2)}.
\end{proof}

\section{Derivative formulas of level 5 }
\label{sec:derivative-level5}

From the discussion of Farkas and Kra \cite[pp. 89-97]{Farkas-Kra},
we have only to consider the case where
\begin{equation*}
(\epsilon, \epsilon^{\prime})=(1/5,j/5), (3/5, j/5), (1,1/5), (1,3/5), (j=1,3,5,7,9).
\end{equation*}

\subsection{Derivative formulas for $(\epsilon,\epsilon^{\prime})=(1/5,1/5), (3/5,3/5)$}

\begin{theorem}
\label{thm:analogue-Jacobi-(1/5,1/5), (3/5,3/5)}
{\it
For every $\tau\in\mathbb{H}^2,$ we have
\begin{equation*}
\frac{
\theta^{\prime}
\left[
\begin{array}{c}
\frac15 \\
\frac15
\end{array}
\right]
}
{
\theta
\left[
\begin{array}{c}
\frac15 \\
\frac15
\end{array}
\right]
}
=
\theta^{\prime}
\left[
\begin{array}{c}
1 \\
1
\end{array}
\right]
\frac
{
\left(
\theta^5
\left[
\begin{array}{c}
\frac15 \\
\frac15
\end{array}
\right]
-
3
\zeta_5^4
\theta^5
\left[
\begin{array}{c}
\frac35 \\
\frac35
\end{array}
\right]
\right)
}
{
10
\theta^3
\left[
\begin{array}{c}
\frac15 \\
\frac15
\end{array}
\right]
\theta^3
\left[
\begin{array}{c}
\frac35 \\
\frac35
\end{array}
\right]
}, \,\,
\frac
{
\theta^{\prime}
\left[
\begin{array}{c}
\frac35 \\
\frac35
\end{array}
\right]
}
{
\theta
\left[
\begin{array}{c}
\frac35 \\
\frac35
\end{array}
\right]
}
=
\theta^{\prime}
\left[
\begin{array}{c}
1 \\
1
\end{array}
\right]
\frac
{
\left(
3
\theta^5
\left[
\begin{array}{c}
\frac15 \\
\frac15
\end{array}
\right]
+
\zeta_5^4
\theta^5
\left[
\begin{array}{c}
\frac35 \\
\frac35
\end{array}
\right]
\right)
}
{
10
\theta^3
\left[
\begin{array}{c}
\frac15 \\
\frac15
\end{array}
\right]
\theta^3
\left[
\begin{array}{c}
\frac35 \\
\frac35
\end{array}
\right]
}.
\end{equation*}
}
\end{theorem}

\begin{proof}
Consider the following elliptic functions:
\begin{equation*}
\varphi(z)
=
\frac
{
\theta^3
\left[
\begin{array}{c}
1 \\
1
\end{array}
\right](z, \tau)
}
{
\theta^2
\left[
\begin{array}{c}
\frac15 \\
\frac15
\end{array}
\right](z, \tau)
\theta
\left[
\begin{array}{c}
\frac35 \\
\frac35
\end{array}
\right](z, \tau)
}
\,\,
\mathrm{and}
\,\,
\psi(z)=
\frac
{
\theta^3
\left[
\begin{array}{c}
1 \\
1
\end{array}
\right](z, \tau)
}
{
\theta^2
\left[
\begin{array}{c}
\frac35 \\
\frac35
\end{array}
\right](z, \tau)
\theta
\left[
\begin{array}{c}
-\frac15 \\
-\frac15
\end{array}
\right](z, \tau)
}.
\end{equation*}
\par
We first note that
in the fundamental parallelogram,
the poles of $\varphi(z)$ are $z=(2\tau+2)/5$ and $z=(\tau+1)/5.$
Direct calculation yields
\begin{equation*}
\mathrm{Res}\left(\varphi(z), \frac{2\tau+2}{5}\right)
=
\zeta_5^3
\frac
{
\theta^3
\left[
\begin{array}{c}
\frac15 \\
\frac15
\end{array}
\right]
}
{
\theta^{\prime}
\left[
\begin{array}{c}
1 \\
1
\end{array}
\right]^2
\theta
\left[
\begin{array}{c}
\frac35 \\
\frac35
\end{array}
\right]
}
\left\{
-3
\frac
{
\theta^{\prime}
\left[
\begin{array}{c}
\frac15 \\
\frac15
\end{array}
\right]
}
{
\theta
\left[
\begin{array}{c}
\frac15 \\
\frac15
\end{array}
\right]
}
+
\frac
{
\theta^{\prime}
\left[
\begin{array}{c}
\frac35 \\
\frac35
\end{array}
\right]
}
{
\theta
\left[
\begin{array}{c}
\frac35 \\
\frac35
\end{array}
\right]
}
\right\}
\end{equation*}
and
\begin{equation*}
\mathrm{Res}\left(\varphi(z), \frac{\tau+1}{5}\right)
=
-
\zeta_5^2
\frac
{
\theta
\left[
\begin{array}{c}
\frac35 \\
\frac35
\end{array}
\right]
}
{
\theta^{\prime}
\left[
\begin{array}{c}
1 \\
1
\end{array}
\right]
}.
\end{equation*}
Since
$
\mathrm{Res}\left(\varphi(z), (2\tau+2)/5\right)+\mathrm{Res}\left(\varphi(z), (\tau+1)/5 \right)=0,
$
it follows that
\begin{equation*}
3
\frac
{
\theta^{\prime}
\left[
\begin{array}{c}
\frac15 \\
\frac15
\end{array}
\right]
}
{
\theta
\left[
\begin{array}{c}
\frac15 \\
\frac15
\end{array}
\right]
}
-
\frac
{
\theta^{\prime}
\left[
\begin{array}{c}
\frac35 \\
\frac35
\end{array}
\right]
}
{
\theta
\left[
\begin{array}{c}
\frac35 \\
\frac35
\end{array}
\right]
}
=
-\zeta_5^4
\frac
{
\theta^{\prime}
\left[
\begin{array}{c}
1 \\
1
\end{array}
\right]
\theta^2
\left[
\begin{array}{c}
\frac35 \\
\frac35
\end{array}
\right]
}
{
\theta^3
\left[
\begin{array}{c}
\frac15 \\
\frac15
\end{array}
\right]
}.
\end{equation*}
\par
From $\psi(z),$ we have
\begin{equation*}
\frac
{
\theta^{\prime}
\left[
\begin{array}{c}
\frac15 \\
\frac15
\end{array}
\right]
}
{
\theta
\left[
\begin{array}{c}
\frac15 \\
\frac15
\end{array}
\right]
}
+3
\frac
{
\theta^{\prime}
\left[
\begin{array}{c}
\frac35 \\
\frac35
\end{array}
\right]
}
{
\theta
\left[
\begin{array}{c}
\frac35 \\
\frac35
\end{array}
\right]
}
=
\frac
{
\theta^{\prime}
\left[
\begin{array}{c}
1 \\
1
\end{array}
\right]
\theta^2
\left[
\begin{array}{c}
\frac15 \\
\frac15
\end{array}
\right]
}
{
\theta^3
\left[
\begin{array}{c}
\frac35 \\
\frac35
\end{array}
\right]
},
\end{equation*}
which proves the theorem.
\end{proof}

\subsection{Derivative formulas for $(\epsilon,\epsilon^{\prime})=(1/5,3/5), (3/5,9/5)$}

\begin{theorem}
\label{thm:analogue-Jacobi-(1/5,3/5), (3/5,9/5)}
{\it
For every $\tau\in\mathbb{H}^2,$ we have
\begin{equation*}
\frac{
\theta^{\prime}
\left[
\begin{array}{c}
\frac15 \\
\frac35
\end{array}
\right]
}
{
\theta
\left[
\begin{array}{c}
\frac15 \\
\frac35
\end{array}
\right]
}
=
-
\theta^{\prime}
\left[
\begin{array}{c}
1 \\
1
\end{array}
\right]
\frac
{
\left(
\theta^5
\left[
\begin{array}{c}
\frac15 \\
\frac35
\end{array}
\right]
+
3
\zeta_5
\theta^5
\left[
\begin{array}{c}
\frac35 \\
\frac95
\end{array}
\right]
\right)
}
{
10
\theta^3
\left[
\begin{array}{c}
\frac15 \\
\frac35
\end{array}
\right]
\theta^3
\left[
\begin{array}{c}
\frac35 \\
\frac95
\end{array}
\right]
},   \,\,
\frac
{
\theta^{\prime}
\left[
\begin{array}{c}
\frac35 \\
\frac95
\end{array}
\right]
}
{
\theta
\left[
\begin{array}{c}
\frac35 \\
\frac95
\end{array}
\right]
}
=
-
\theta^{\prime}
\left[
\begin{array}{c}
1 \\
1
\end{array}
\right]
\frac
{
\left(
3
\theta^5
\left[
\begin{array}{c}
\frac15 \\
\frac35
\end{array}
\right]
-
\zeta_5
\theta^5
\left[
\begin{array}{c}
\frac35 \\
\frac95
\end{array}
\right]
\right)
}
{
10
\theta^3
\left[
\begin{array}{c}
\frac15 \\
\frac35
\end{array}
\right]
\theta^3
\left[
\begin{array}{c}
\frac35 \\
\frac95
\end{array}
\right]
}.
\end{equation*}
}
\end{theorem}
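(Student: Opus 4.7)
The plan is to mimic the proof of Theorem~\ref{thm:analogue-Jacobi-(1/5,1/5), (3/5,3/5)} line by line, simply replacing the pair of characteristics $(1/5,1/5),(3/5,3/5)$ by $(1/5,3/5),(3/5,9/5)$. Concretely, I would introduce the two functions
\begin{equation*}
\varphi(z)=\frac
{\theta^3\left[\begin{array}{c}1\\1\end{array}\right](z,\tau)}
{\theta^2\left[\begin{array}{c}\frac15\\\frac35\end{array}\right](z,\tau)\,\theta\left[\begin{array}{c}\frac35\\\frac95\end{array}\right](z,\tau)},
\qquad
\psi(z)=\frac
{\theta^3\left[\begin{array}{c}1\\1\end{array}\right](z,\tau)}
{\theta^2\left[\begin{array}{c}\frac35\\\frac95\end{array}\right](z,\tau)\,\theta\left[\begin{array}{c}-\frac15\\-\frac35\end{array}\right](z,\tau)}
\end{equation*}
and verify, using the quasi-periodicity formula~(\ref{eqn:integer-char}), that both are elliptic with respect to $\mathbb{Z}+\mathbb{Z}\tau$: for $\varphi$ the weighted sums of upper and lower characteristics are $3-2/5-3/5=2$ and $3-6/5-9/5=0$, both integers, so the exponential factors cancel under $z\mapsto z+1$ and $z\mapsto z+\tau$; the same calculation works for $\psi$.

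The next step is to locate the poles in a fundamental parallelogram. Since $\theta\left[\begin{array}{c}\epsilon\\\epsilon'\end{array}\right](\zeta,\tau)$ has a unique simple zero at $\zeta=\tfrac{1-\epsilon}{2}\tau+\tfrac{1-\epsilon'}{2}$, the function $\varphi$ has a triple zero at $z=0$, a double pole at (a representative of) $z=\tfrac{2\tau+1}{5}$, and a simple pole congruent to $z=\tfrac{\tau}{5}-\tfrac{2}{5}$; because $9/5>1$, I would apply the characteristic-reduction identity $\theta\left[\begin{array}{c}\epsilon+2m\\\epsilon'+2n\end{array}\right]=\exp(\pi i\epsilon n)\theta\left[\begin{array}{c}\epsilon\\\epsilon'\end{array}\right]$ to bring the index into the standard range, which is where the $\zeta_5$ prefactor in the final formula originates. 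The same analysis locates the two poles of $\psi$.

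Once the poles are in place, I would compute both residues exactly as in the preceding theorem. At each simple pole the residue is a product of theta constants divided by $\theta'\left[\begin{array}{c}1\\1\end{array}\right]$; at each double pole, a Taylor expansion of the squared theta in the denominator produces a linear combination of the two logarithmic derivatives $\theta'\left[\begin{array}{c}\frac15\\\frac35\end{array}\right]/\theta\left[\begin{array}{c}\frac15\\\frac35\end{array}\right]$ and $\theta'\left[\begin{array}{c}\frac35\\\frac95\end{array}\right]/\theta\left[\begin{array}{c}\frac35\\\frac95\end{array}\right]$. Applying Theorem~\ref{thm-fundamental-elliptic-function} to $\varphi$ and independently to $\psi$ produces a pair of linear equations in these two logarithmic derivatives, and solving the resulting $2\times 2$ system yields the two formulas stated in the theorem.

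The main obstacle is bookkeeping rather than anything conceptual. The shift from $(1/5,1/5),(3/5,3/5)$ to $(1/5,3/5),(3/5,9/5)$ alters $\epsilon'$ by $2/5$ at the first pole and by $6/5$ at the second, so every exponential factor of the form $\exp\{2\pi i[\cdots\epsilon'\cdots]\}$ arising from~(\ref{eqn:integer-char}) and from the characteristic-reduction identity picks up a different fifth root of unity. Tracking these roots consistently across the $\varphi$- and $\psi$-computations, and verifying that they assemble into the single factor $\zeta_5$ together with the overall minus signs that distinguish the present formulas from those of Theorem~\ref{thm:analogue-Jacobi-(1/5,1/5), (3/5,3/5)}, is the place where errors are most likely.
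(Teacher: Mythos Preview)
Your approach is exactly the one the paper takes: it introduces the same two elliptic functions $\varphi$ and $\psi$ you wrote down and then says ``The theorem can be proved in the same way as Theorem~\ref{thm:analogue-Jacobi-(1/5,1/5), (3/5,3/5)}.'' Your outline of the residue computation and the bookkeeping of fifth roots of unity is correct in spirit and in detail.
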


\begin{proof}
Consider the following elliptic functions:
\begin{equation*}
\varphi(z)
=
\frac
{
\theta^3
\left[
\begin{array}{c}
1 \\
1
\end{array}
\right](z, \tau)
}
{
\theta^2
\left[
\begin{array}{c}
\frac15 \\
\frac35
\end{array}
\right](z, \tau)
\theta
\left[
\begin{array}{c}
\frac35 \\
\frac95
\end{array}
\right](z, \tau)
}
\,\,
\mathrm{and}
\,\,
\psi(z)=
\frac
{
\theta^3
\left[
\begin{array}{c}
1 \\
1
\end{array}
\right](z, \tau)
}
{
\theta^2
\left[
\begin{array}{c}
\frac35 \\
\frac95
\end{array}
\right](z, \tau)
\theta
\left[
\begin{array}{c}
-\frac15 \\
-\frac35
\end{array}
\right](z, \tau)
}.
\end{equation*}
The theorem can be proved in the same way as Theorem \ref{thm:analogue-Jacobi-(1/5,1/5), (3/5,3/5)}.
\end{proof}

\subsection{Derivative formulas for $(\epsilon,\epsilon^{\prime})=(1/5,1), (3/5,1)$}

\begin{theorem}
\label{thm:analogue-Jacobi-(1/5,1), (3/5,1)}
{\it
For every $\tau\in\mathbb{H}^2,$ we have
\begin{equation*}
\frac{
\theta^{\prime}
\left[
\begin{array}{c}
\frac15 \\
1
\end{array}
\right]
}
{
\theta
\left[
\begin{array}{c}
\frac15 \\
1
\end{array}
\right]
}
=
-
\zeta_5^3
\theta^{\prime}
\left[
\begin{array}{c}
1 \\
1
\end{array}
\right]
\frac
{
\left(
\theta^5
\left[
\begin{array}{c}
\frac15 \\
1
\end{array}
\right]
+
3
\theta^5
\left[
\begin{array}{c}
\frac35 \\
1
\end{array}
\right]
\right)
}
{
10
\theta^3
\left[
\begin{array}{c}
\frac15 \\
1
\end{array}
\right]
\theta^3
\left[
\begin{array}{c}
\frac35 \\
1
\end{array}
\right]
}, \,\,
\frac
{
\theta^{\prime}
\left[
\begin{array}{c}
\frac35 \\
1
\end{array}
\right]
}
{
\theta
\left[
\begin{array}{c}
\frac35 \\
1
\end{array}
\right]
}
=
-
\zeta_5^3
\theta^{\prime}
\left[
\begin{array}{c}
1 \\
1
\end{array}
\right]
\frac
{
\left(
3
\theta^5
\left[
\begin{array}{c}
\frac15 \\
1
\end{array}
\right]
-
\theta^5
\left[
\begin{array}{c}
\frac35 \\
1
\end{array}
\right]
\right)
}
{
10
\theta^3
\left[
\begin{array}{c}
\frac15 \\
1
\end{array}
\right]
\theta^3
\left[
\begin{array}{c}
\frac35 \\
1
\end{array}
\right]
}.
\end{equation*}
}
\end{theorem}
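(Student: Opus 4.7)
The plan is to follow the template established by Theorem \ref{thm:analogue-Jacobi-(1/5,1/5), (3/5,3/5)}, applying the residue theorem to two carefully chosen elliptic functions and solving the resulting $2\times 2$ linear system for the two logarithmic derivatives.

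First I would introduce the two candidate functions
\begin{equation*}
\varphi(z)
=
\frac{\theta^3\left[\begin{array}{c}1\\1\end{array}\right](z,\tau)}
{\theta^2\left[\begin{array}{c}\frac15\\1\end{array}\right](z,\tau)\,\theta\left[\begin{array}{c}\frac35\\1\end{array}\right](z,\tau)}
\quad\text{and}\quad
\psi(z)
=
\frac{\theta^3\left[\begin{array}{c}1\\1\end{array}\right](z,\tau)}
{\theta^2\left[\begin{array}{c}\frac35\\1\end{array}\right](z,\tau)\,\theta\left[\begin{array}{c}-\frac15\\-1\end{array}\right](z,\tau)}
\end{equation*}
and verify ellipticity by applying (\ref{eqn:integer-char}): the numerator contributes a factor with characteristic weight $3$, while the denominators contribute weight $2+1=3$, and the condition $2\epsilon+\delta\in 2\mathbb{Z}+1$, $2\epsilon'+\delta'\in 2\mathbb{Z}+1$ holds in both cases (for $\varphi$: $2(1/5)+3/5=1$ and $2(1)+1=3$; analogously for $\psi$).

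Next I would locate the poles. Using the fact that $\theta[\epsilon,\epsilon'](z,\tau)$ vanishes only at $z=\frac{1-\epsilon}{2}\tau+\frac{1-\epsilon'}{2}$ in the fundamental parallelogram, the poles of $\varphi$ are the double pole at $z=2\tau/5$ (coming from $\theta[1/5,1]^2$) and the simple pole at $z=\tau/5$ (coming from $\theta[3/5,1]$); the poles of $\psi$ are analogous. At the double pole I would compute the Laurent expansion using the transformation formula (\ref{eqn:real-char}) to translate theta values at $z=2\tau/5$ back to theta constants, thereby producing a residue of the schematic form
\begin{equation*}
\mathrm{Res}(\varphi, 2\tau/5)
= \text{(root of unity)} \cdot
\frac{\theta^3\left[\begin{array}{c}\frac35\\1\end{array}\right]}
{\theta'\left[\begin{array}{c}1\\1\end{array}\right]^2 \theta\left[\begin{array}{c}\frac15\\1\end{array}\right]}
\left\{
-3\,\frac{\theta'[1/5,1]}{\theta[1/5,1]} + \frac{\theta'[3/5,1]}{\theta[3/5,1]}
\right\},
\end{equation*}
while the simple pole gives a residue of the form $\text{(root of unity)}\cdot\theta[1/5,1]/\theta'[1,1]$. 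Applying the residue theorem $\mathrm{Res}(\varphi,2\tau/5)+\mathrm{Res}(\varphi,\tau/5)=0$ yields one linear relation between the two ratios $\theta'[1/5,1]/\theta[1/5,1]$ and $\theta'[3/5,1]/\theta[3/5,1]$; the same procedure applied to $\psi(z)$ yields a second, independent relation. Solving the $2\times 2$ system gives exactly the claimed formulas.

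The main obstacle will be the careful bookkeeping of roots of unity arising when (\ref{eqn:real-char}) is used at $z=\tau/5$ and $z=2\tau/5$: since $m=1/5$ or $2/5$ is not a half-integer and $n$ may be chosen so that the resulting characteristic has integer adjustments, a combination of (\ref{eqn:integer-char}) and (\ref{eqn:real-char}) must be applied, and the factor $\exp(-\pi i m^2 \tau/4)$ must be traced carefully. This is where the factor $\zeta_5^3$ in the final formula originates, and any sign error in the exponent of $\zeta_5$ would propagate through the entire linear system. Once the correct residues are pinned down, the rest is linear algebra identical in spirit to the proofs of Theorems \ref{thm:analogue-Jacobi-(1/5,1/5), (3/5,3/5)} and \ref{thm:analogue-Jacobi-(1/5,3/5), (3/5,9/5)}.
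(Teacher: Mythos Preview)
Your approach is essentially the paper's own: the same elliptic function $\varphi$, and a $\psi$ that differs from the paper's $\psi=\theta^3[1,1]/\bigl(\theta^2[3/5,1]\,\theta[-1/5,\,1]\bigr)$ only by a constant root of unity (since $\theta[-1/5,-1]=\zeta_{10}\,\theta[-1/5,1]$), followed by the residue theorem and the $2\times 2$ linear solve exactly as in Theorem~\ref{thm:analogue-Jacobi-(1/5,1/5), (3/5,3/5)}. One small slip in your schematic residues: at the double pole $z=2\tau/5$ the prefactor should be proportional to $\theta^3[1/5,1]\big/\bigl(\theta'[1,1]^2\theta[3/5,1]\bigr)$, and at the simple pole $z=\tau/5$ the residue is proportional to $\theta[3/5,1]/\theta'[1,1]$ (you have the characteristics $1/5$ and $3/5$ interchanged in both places); with that corrected, summing residues gives precisely the paper's relations (\ref{eqn:relation-(1/5,1)-(3/5,1)-(1)}) and (\ref{eqn:relation-(1/5,1)-(3/5,1)-(2)}).
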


\begin{proof}
Consider the following elliptic functions:
\begin{equation*}
\varphi(z)
=
\frac
{
\theta^3
\left[
\begin{array}{c}
1 \\
1
\end{array}
\right](z, \tau)
}
{
\theta^2
\left[
\begin{array}{c}
\frac15 \\
1
\end{array}
\right](z, \tau)
\theta
\left[
\begin{array}{c}
\frac35 \\
1
\end{array}
\right](z, \tau)
}
\,\,
\mathrm{and}
\,\,
\psi(z)=
\frac
{
\theta^3
\left[
\begin{array}{c}
1 \\
1
\end{array}
\right](z, \tau)
}
{
\theta^2
\left[
\begin{array}{c}
\frac35 \\
1
\end{array}
\right](z, \tau)
\theta
\left[
\begin{array}{c}
-\frac15 \\
1
\end{array}
\right](z, \tau)
}.
\end{equation*}
From $\varphi(z)$ and $\psi(z),$ we have
\begin{equation}
\label{eqn:relation-(1/5,1)-(3/5,1)-(1)}
3
\frac
{
\theta^{\prime}
\left[
\begin{array}{c}
\frac15 \\
1
\end{array}
\right]
}
{
\theta
\left[
\begin{array}{c}
\frac15 \\
1
\end{array}
\right]
}
-
\frac
{
\theta^{\prime}
\left[
\begin{array}{c}
\frac35 \\
1
\end{array}
\right]
}
{
\theta
\left[
\begin{array}{c}
\frac35 \\
1
\end{array}
\right]
}
=
-\zeta_5^3
\frac
{
\theta^{\prime}
\left[
\begin{array}{c}
1 \\
1
\end{array}
\right]
\theta^2
\left[
\begin{array}{c}
\frac35 \\
1
\end{array}
\right]
}
{
\theta^3
\left[
\begin{array}{c}
\frac15 \\
1
\end{array}
\right]
}
\end{equation}
and
\begin{equation}
\label{eqn:relation-(1/5,1)-(3/5,1)-(2)}
\frac
{
\theta^{\prime}
\left[
\begin{array}{c}
\frac15 \\
1
\end{array}
\right]
}
{
\theta
\left[
\begin{array}{c}
\frac15 \\
1
\end{array}
\right]
}
+3
\frac
{
\theta^{\prime}
\left[
\begin{array}{c}
\frac35 \\
1
\end{array}
\right]
}
{
\theta
\left[
\begin{array}{c}
\frac35 \\
1
\end{array}
\right]
}
=
-\zeta_5^3
\frac
{
\theta^{\prime}
\left[
\begin{array}{c}
1 \\
1
\end{array}
\right]
\theta^2
\left[
\begin{array}{c}
\frac15 \\
1
\end{array}
\right]
}
{
\theta^3
\left[
\begin{array}{c}
\frac35 \\
1
\end{array}
\right]
},
\end{equation}
which proves the theorem.
\end{proof}

By Jacobi's triple product identity (\ref{eqn:Jacobi-triple}),
we obtain the following corollary:

\begin{corollary}
\label{coro:pro-series-(1/5,1)-(3/5,1)}
{\it
For $q\in\mathbb{C}$ with $|q|<1,$
we have
\begin{equation*}
q \prod_{n=1}^{\infty}
\frac
{
(1-q^n)^2
}
{
(1-q^{5n-2})^5 (1-q^{5n-3})^5
}
=
\sum_{n=1}^{\infty}
(d_{1,5}(n)-d_{4,5}(n)) q^n
-
3
\sum_{n=1}^{\infty}
(d_{2,5}(n)-d_{3,5}(n)) q^n
\end{equation*}
and
\begin{equation*}
 \prod_{n=1}^{\infty}
\frac
{
(1-q^n)^2
}
{
(1-q^{5n-1})^5 (1-q^{5n-4})^5
}
=
1
+
3
\sum_{n=1}^{\infty}
(d_{1,5}(n)-d_{4,5}(n)) q^n
+
\sum_{n=1}^{\infty}
(d_{2,5}(n)-d_{3,5}(n)) q^n.
\end{equation*}
}
\end{corollary}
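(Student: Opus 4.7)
The plan is to read off both identities by applying Jacobi's triple product identity (\ref{eqn:Jacobi-triple}) to the two intermediate relations (\ref{eqn:relation-(1/5,1)-(3/5,1)-(1)}) and (\ref{eqn:relation-(1/5,1)-(3/5,1)-(2)}) established inside the proof of Theorem \ref{thm:analogue-Jacobi-(1/5,1), (3/5,1)}. Set $q=\exp(2\pi i\tau/5)$, so that $x=\exp(\pi i\tau)=q^{5/2}$, and substitute into (\ref{eqn:Jacobi-triple}) to obtain
\begin{align*}
\theta\left[\begin{array}{c}1/5\\1\end{array}\right] &= \zeta_{20}\,q^{1/40}\prod_{n=1}^{\infty}(1-q^{5n})(1-q^{5n-2})(1-q^{5n-3}),\\
\theta\left[\begin{array}{c}3/5\\1\end{array}\right] &= \zeta_{20}^{3}\,q^{9/40}\prod_{n=1}^{\infty}(1-q^{5n})(1-q^{5n-1})(1-q^{5n-4}),
\end{align*}
and $\theta'[1,1]=-2\pi q^{5/8}\prod_{n=1}^{\infty}(1-q^{5n})^{3}$ from Jacobi's derivative formula (\ref{eqn:Jacobi-derivative}).

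Next, the right-hand sides of (\ref{eqn:relation-(1/5,1)-(3/5,1)-(1)}) and (\ref{eqn:relation-(1/5,1)-(3/5,1)-(2)}) are computed purely as infinite products. The key bookkeeping step is that the fractional powers of $q$ assemble to $q^{1}$ and $q^{0}$ respectively, while the roots of unity combine to $-2\pi i$ and $2\pi i$. After dividing out the common factor $\prod(1-q^{n})^2=\prod(1-q^{5n})^2(1-q^{5n-1})^2(1-q^{5n-2})^2(1-q^{5n-3})^2(1-q^{5n-4})^2$, the right-hand sides become
\begin{equation*}
-2\pi i\cdot q\prod_{n=1}^{\infty}\frac{(1-q^{n})^{2}}{(1-q^{5n-2})^{5}(1-q^{5n-3})^{5}}\quad\text{and}\quad 2\pi i\prod_{n=1}^{\infty}\frac{(1-q^{n})^{2}}{(1-q^{5n-1})^{5}(1-q^{5n-4})^{5}},
\end{equation*}
matching (up to the factor $2\pi i$) the two product sides claimed in the corollary.

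The left-hand sides are treated by logarithmic differentiation of (\ref{eqn:Jacobi-triple}) at $\zeta=0$. A direct computation using $x^{2n-4/5}=q^{5n-2}$ etc.\ yields
\begin{equation*}
\frac{\theta'\left[\begin{array}{c}1/5\\1\end{array}\right]}{\theta\left[\begin{array}{c}1/5\\1\end{array}\right]}=\frac{\pi i}{5}-2\pi i\sum_{n\ge1}\frac{q^{5n-2}}{1-q^{5n-2}}+2\pi i\sum_{n\ge1}\frac{q^{5n-3}}{1-q^{5n-3}},
\end{equation*}
and a similar expression for $\theta'[3/5,1]/\theta[3/5,1]$ in terms of $q^{5n-1}$ and $q^{5n-4}$. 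Expanding each Lambert-type series via $\sum_{n\ge1}q^{5n-j}/(1-q^{5n-j})=\sum_{m\ge1}d_{5-j,5}(m)q^{m}$, forming the two combinations dictated by (\ref{eqn:relation-(1/5,1)-(3/5,1)-(1)}) and (\ref{eqn:relation-(1/5,1)-(3/5,1)-(2)}), and dividing through by $\pm 2\pi i$, the constant terms collapse correctly and the corollary follows.

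The only genuine obstacle is the bookkeeping of the prefactors: one has to verify that the accumulated powers of $\zeta_{20}$ together with the factor $-\zeta_5^{3}$ and the $-2\pi$ from $\theta'[1,1]$ combine to exactly $\pm 2\pi i$, and that the fractional $q$-exponents sum to an integer matching the $q$ (resp.\ $1$) on the product side of the corollary. Once this arithmetic is checked, everything else is a mechanical application of (\ref{eqn:Jacobi-triple}) and geometric-series expansion.
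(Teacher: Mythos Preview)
Your approach is exactly the one the paper uses: the paper's proof is a single line stating that the corollary follows from (\ref{eqn:relation-(1/5,1)-(3/5,1)-(1)}) and (\ref{eqn:relation-(1/5,1)-(3/5,1)-(2)}) via Jacobi's triple product identity, and you have simply written out the details of that computation. The bookkeeping you flag as the ``only genuine obstacle'' does in fact work out cleanly (for instance, in (\ref{eqn:relation-(1/5,1)-(3/5,1)-(1)}) the accumulated root of unity is $-\zeta_5^{3}\cdot(-2\pi)\cdot\zeta_{20}^{6}/\zeta_{20}^{3}=2\pi\zeta_{20}^{15}=-2\pi i$ and the $q$-exponent is $5/8+18/40-3/40=1$), so your sketch is a complete proof once that arithmetic is filled in.
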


\begin{proof}
The corollary follows from Eqs. (\ref{eqn:relation-(1/5,1)-(3/5,1)-(1)}) and (\ref{eqn:relation-(1/5,1)-(3/5,1)-(2)}).
\end{proof}

\subsection{Derivative formulas for $(\epsilon,\epsilon^{\prime})=(1/5,7/5), (3/5,1/5)$}

\begin{theorem}
\label{thm:analogue-Jacobi-(1/5,7/5), (3/5,1/5)}
{\it
For every $\tau\in\mathbb{H}^2,$ we have
\begin{equation*}
\frac{
\theta^{\prime}
\left[
\begin{array}{c}
\frac15 \\
\frac75
\end{array}
\right]
}
{
\theta
\left[
\begin{array}{c}
\frac15 \\
\frac75
\end{array}
\right]
}
=
-
\zeta_5
\theta^{\prime}
\left[
\begin{array}{c}
1 \\
1
\end{array}
\right]
\frac
{
\left(
\theta^5
\left[
\begin{array}{c}
\frac15 \\
\frac75
\end{array}
\right]
-
3
\zeta_5^3
\theta^5
\left[
\begin{array}{c}
\frac35 \\
\frac15
\end{array}
\right]
\right)
}
{
10
\theta^3
\left[
\begin{array}{c}
\frac15 \\
\frac75
\end{array}
\right]
\theta^3
\left[
\begin{array}{c}
\frac35 \\
\frac15
\end{array}
\right]
},  \,\,
\frac
{
\theta^{\prime}
\left[
\begin{array}{c}
\frac35 \\
\frac15
\end{array}
\right]
}
{
\theta
\left[
\begin{array}{c}
\frac35 \\
\frac15
\end{array}
\right]
}
=
-
\zeta_5
\theta^{\prime}
\left[
\begin{array}{c}
1 \\
1
\end{array}
\right]
\frac
{
\left(
3
\theta^5
\left[
\begin{array}{c}
\frac15 \\
\frac75
\end{array}
\right]
+
\zeta_5^3
\theta^5
\left[
\begin{array}{c}
\frac35 \\
\frac15
\end{array}
\right]
\right)
}
{
10
\theta^3
\left[
\begin{array}{c}
\frac15 \\
\frac75
\end{array}
\right]
\theta^3
\left[
\begin{array}{c}
\frac35 \\
\frac15
\end{array}
\right]
}.
\end{equation*}
}
\end{theorem}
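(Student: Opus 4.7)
The plan is to mirror exactly the residue-theorem proof used for Theorem \ref{thm:analogue-Jacobi-(1/5,1/5), (3/5,3/5)}, taking the two auxiliary functions displayed in the proof statement:
\begin{equation*}
\varphi(z)=\frac{\theta^3\!\left[\begin{smallmatrix}1\\1\end{smallmatrix}\right]\!(z,\tau)}{\theta^2\!\left[\begin{smallmatrix}\frac15\\\frac75\end{smallmatrix}\right]\!(z,\tau)\,\theta\!\left[\begin{smallmatrix}\frac35\\\frac15\end{smallmatrix}\right]\!(z,\tau)},\qquad
\psi(z)=\frac{\theta^3\!\left[\begin{smallmatrix}1\\1\end{smallmatrix}\right]\!(z,\tau)}{\theta^2\!\left[\begin{smallmatrix}\frac35\\\frac15\end{smallmatrix}\right]\!(z,\tau)\,\theta\!\left[\begin{smallmatrix}-\frac15\\-\frac75\end{smallmatrix}\right]\!(z,\tau)}.
\end{equation*}
The first step is to verify from (\ref{eqn:integer-char}) that these are doubly periodic. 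For $\varphi$, the sum of upper characteristics in the denominator is $\tfrac15+\tfrac15+\tfrac35=1$, matching the numerator sum $3$ modulo $2$, and the sum of lower characteristics is $\tfrac75+\tfrac75+\tfrac15=3$, again matching $3$; hence the quasi-periodicity factors under $z\mapsto z+1$ and $z\mapsto z+\tau$ cancel between numerator and denominator. The same check works for $\psi$.

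Next I would locate the poles in the fundamental parallelogram using the zero formula $z=(1-\epsilon)\tau/2+(1-\epsilon')/2$. Then $\varphi$ has a double pole at $z_{1}=(2\tau-1)/5$ (the zero of $\theta[\tfrac15,\tfrac75]$) and a simple pole at $z_{2}=(\tau+2)/5$ (the zero of $\theta[\tfrac35,\tfrac15]$), while $\psi$ has a double pole at $z_{2}$ and a simple pole at $z_{3}=(3\tau+1)/5$. Each residue is evaluated by using (\ref{eqn:real-char}) to rewrite $\theta\!\left[\begin{smallmatrix}1\\1\end{smallmatrix}\right](z,\tau)$ near $z_j$ in terms of a theta function centred at the origin, then Taylor-expanding. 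At a simple pole this gives directly a single value of a theta constant over $\theta'[1,1]$, whereas the double pole requires one differentiation in the Laurent extraction, producing the logarithmic derivatives $\theta'[\tfrac15,\tfrac75]/\theta[\tfrac15,\tfrac75]$ and $\theta'[\tfrac35,\tfrac15]/\theta[\tfrac35,\tfrac15]$. Applying Theorem \ref{thm-fundamental-elliptic-function} to $\varphi$ and $\psi$ yields two linear equations in these two logarithmic derivatives of the form $3X-Y=A$ and $X+3Y=B$, with explicit right-hand sides in theta constants. Solving this $2\times 2$ system produces the stated formulas.

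The main obstacle I anticipate is the careful bookkeeping of the exponential and root-of-unity factors. Evaluating (\ref{eqn:real-char}) at the lattice points $z_{1},z_{2},z_{3}$ produces prefactors of the form $\exp(-\pi i m(\epsilon'+n)/2-\pi i m^{2}\tau/4-\pi i m\zeta)$ which, at these fifth-division points, collapse to specific powers of $\zeta_{5}$. One must combine these with the characteristic-shift identity $\theta\!\left[\begin{smallmatrix}\epsilon+2m\\\epsilon'+2n\end{smallmatrix}\right]=e^{\pi i\epsilon n}\theta\!\left[\begin{smallmatrix}\epsilon\\\epsilon'\end{smallmatrix}\right]$ (needed, for example, to reconcile the occurrence of $\theta[\tfrac15,\tfrac75]$ versus $\theta[\tfrac15,-\tfrac35]$ when conjugating from the pole back to the origin) to obtain precisely the prefactors $-\zeta_{5}$ and the interior coefficients $-3\zeta_{5}^{3}$ and $+\zeta_{5}^{3}$ that appear in the theorem. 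The algebraic structure of the residue computation is otherwise identical to that in the preceding subsections.
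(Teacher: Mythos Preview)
Your approach is exactly the paper's: apply the residue theorem to the same pair of elliptic functions (your $\psi$ uses $\theta\!\left[\begin{smallmatrix}-1/5\\-7/5\end{smallmatrix}\right]$ where the paper writes $\theta\!\left[\begin{smallmatrix}-1/5\\3/5\end{smallmatrix}\right]$, but these differ only by the constant factor $e^{\pi i/5}$ from the shift identity, so the functions are scalar multiples). The ellipticity check, the location of the poles $z_1=(2\tau-1)/5$, $z_2=(\tau+2)/5$, $z_3=(3\tau+1)/5$, the $3X-Y$ and $X+3Y$ structure of the resulting linear system, and the bookkeeping of the $\zeta_5$ powers are all as in the paper's argument.
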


\begin{proof}
Consider the following elliptic functions:
\begin{equation*}
\varphi(z)
=
\frac
{
\theta^3
\left[
\begin{array}{c}
1 \\
1
\end{array}
\right](z, \tau)
}
{
\theta^2
\left[
\begin{array}{c}
\frac15 \\
\frac75
\end{array}
\right](z, \tau)
\theta
\left[
\begin{array}{c}
\frac35 \\
\frac15
\end{array}
\right](z, \tau)
}
\,\,
\mathrm{and}
\,\,
\psi(z)=
\frac
{
\theta^3
\left[
\begin{array}{c}
1 \\
1
\end{array}
\right](z, \tau)
}
{
\theta^2
\left[
\begin{array}{c}
\frac35 \\
\frac15
\end{array}
\right](z, \tau)
\theta
\left[
\begin{array}{c}
-\frac15 \\
\frac35
\end{array}
\right](z, \tau)
}.
\end{equation*}
The theorem can be proved in the same way as Theorem \ref{thm:analogue-Jacobi-(1/5,1/5), (3/5,3/5)}.
\end{proof}

\subsection{Derivative formulas for $(\epsilon,\epsilon^{\prime})=(1/5,9/5), (3/5,7/5)$}

\begin{theorem}
\label{thm:analogue-Jacobi-(1/5,9/5), (3/5,7/5)}
{\it
For every $\tau\in\mathbb{H}^2,$ we have
\begin{equation*}
\frac{
\theta^{\prime}
\left[
\begin{array}{c}
\frac15 \\
\frac95
\end{array}
\right]
}
{
\theta
\left[
\begin{array}{c}
\frac15 \\
\frac95
\end{array}
\right]
}
=
\zeta_5
\theta^{\prime}
\left[
\begin{array}{c}
1 \\
1
\end{array}
\right]
\frac
{
\left(
\theta^5
\left[
\begin{array}{c}
\frac15 \\
\frac95
\end{array}
\right]
-
3
\zeta_5
\theta^5
\left[
\begin{array}{c}
\frac35 \\
\frac75
\end{array}
\right]
\right)
}
{
10
\theta^3
\left[
\begin{array}{c}
\frac15 \\
\frac95
\end{array}
\right]
\theta^3
\left[
\begin{array}{c}
\frac35 \\
\frac75
\end{array}
\right]
}, \,\,
\frac
{
\theta^{\prime}
\left[
\begin{array}{c}
\frac35 \\
\frac75
\end{array}
\right]
}
{
\theta
\left[
\begin{array}{c}
\frac35 \\
\frac75
\end{array}
\right]
}
=
\zeta_5
\theta^{\prime}
\left[
\begin{array}{c}
1 \\
1
\end{array}
\right]
\frac
{
\left(
3
\theta^5
\left[
\begin{array}{c}
\frac15 \\
\frac95
\end{array}
\right]
+
\zeta_5
\theta^5
\left[
\begin{array}{c}
\frac35 \\
\frac75
\end{array}
\right]
\right)
}
{
10
\theta^3
\left[
\begin{array}{c}
\frac15 \\
\frac95
\end{array}
\right]
\theta^3
\left[
\begin{array}{c}
\frac35 \\
\frac75
\end{array}
\right]
}.
\end{equation*}
}
\end{theorem}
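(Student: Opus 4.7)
The plan is to adapt the residue-theorem argument of Theorem \ref{thm:analogue-Jacobi-(1/5,1/5), (3/5,3/5)} to the characteristics at hand. I introduce the two elliptic functions
$$
\varphi(z)=\frac{\theta^3\!\left[\begin{array}{c}1\\1\end{array}\right]\!(z,\tau)}{\theta^2\!\left[\begin{array}{c}\frac{1}{5}\\\frac{9}{5}\end{array}\right]\!(z,\tau)\,\theta\!\left[\begin{array}{c}\frac{3}{5}\\\frac{7}{5}\end{array}\right]\!(z,\tau)},\quad \psi(z)=\frac{\theta^3\!\left[\begin{array}{c}1\\1\end{array}\right]\!(z,\tau)}{\theta^2\!\left[\begin{array}{c}\frac{3}{5}\\\frac{7}{5}\end{array}\right]\!(z,\tau)\,\theta\!\left[\begin{array}{c}-\frac{1}{5}\\-\frac{9}{5}\end{array}\right]\!(z,\tau)}.
$$
The first step is to confirm via the quasi-periodicity formula (\ref{eqn:integer-char}) that both $\varphi$ and $\psi$ are doubly periodic with respect to $\Z\oplus\tau\Z$: in each function a direct computation shows that the character sums in numerator and denominator differ by an even integer, so all the $z$- and $\tau$-dependent exponential factors cancel.

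Next I locate the poles inside the fundamental parallelogram via the zero-location formula $z=\frac{1-\epsilon}{2}\tau+\frac{1-\epsilon'}{2}$ recalled in Section \ref{sec:properties}. For $\varphi$ this yields a double pole at $z_1=(2\tau-2)/5$ coming from $\theta^2[1/5,9/5]$ and a simple pole at $z_2=(\tau-1)/5$ coming from $\theta[3/5,7/5]$; an analogous pair of poles arises for $\psi$. At each pole I Taylor-expand every theta factor and then use (\ref{eqn:integer-char}) and (\ref{eqn:real-char}) to translate the local theta values and local first derivatives back to theta constants at $\zeta=0$ and to $\theta'[1,1]$, picking up explicit exponential phases and powers of $\zeta_5$ in the process. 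This reduces each residue to a linear expression in the two unknown ratios $\theta'[1/5,9/5]/\theta[1/5,9/5]$ and $\theta'[3/5,7/5]/\theta[3/5,7/5]$.

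Applying the residue theorem (Theorem \ref{thm-fundamental-elliptic-function}) separately to $\varphi$ and to $\psi$ yields a pair of linear equations for these two ratios of exactly the structural shape displayed at the end of the proof of Theorem \ref{thm:analogue-Jacobi-(1/5,1/5), (3/5,3/5)}. Solving the resulting $2\times2$ system by Cramer's rule (its determinant is $10$) produces the two formulas in the statement. The main obstacle, as in the earlier theorems of this section, is the careful bookkeeping of the root-of-unity phases produced when the translations are applied: the shifts $\epsilon'=9/5$ and $\epsilon'=7/5$ accumulate phases differently from those in Theorem \ref{thm:analogue-Jacobi-(1/5,1/5), (3/5,3/5)}, so the outer coefficient $\zeta_5$ and the inner constant $3\zeta_5$ appearing in the statement must be recomputed from first principles, since any sign error propagates through both equations of the linear system and corrupts both final formulas.
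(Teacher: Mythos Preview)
Your proposal is correct and follows essentially the same route as the paper: the paper also applies the residue theorem to a pair of elliptic functions of this exact shape and refers back to Theorem \ref{thm:analogue-Jacobi-(1/5,1/5), (3/5,3/5)} for the details. The only cosmetic difference is that the paper writes the simple-pole factors as $\theta\!\left[\begin{smallmatrix}3/5\\-3/5\end{smallmatrix}\right]$ and $\theta\!\left[\begin{smallmatrix}-1/5\\1/5\end{smallmatrix}\right]$, which differ from your $\theta\!\left[\begin{smallmatrix}3/5\\7/5\end{smallmatrix}\right]$ and $\theta\!\left[\begin{smallmatrix}-1/5\\-9/5\end{smallmatrix}\right]$ only by constant phases, so the ellipticity check and the residue computation go through identically.
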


\begin{proof}
Consider the following elliptic functions:
\begin{equation*}
\varphi(z)
=
\frac
{
\theta^3
\left[
\begin{array}{c}
1 \\
1
\end{array}
\right](z, \tau)
}
{
\theta^2
\left[
\begin{array}{c}
\frac15 \\
\frac95
\end{array}
\right](z, \tau)
\theta
\left[
\begin{array}{c}
\frac35 \\
-\frac35
\end{array}
\right](z, \tau)
}
\,\,
\mathrm{and}
\,\,
\psi(z)=
\frac
{
\theta^3
\left[
\begin{array}{c}
1 \\
1
\end{array}
\right](z, \tau)
}
{
\theta^2
\left[
\begin{array}{c}
\frac35 \\
\frac75
\end{array}
\right](z, \tau)
\theta
\left[
\begin{array}{c}
-\frac15 \\
\frac15
\end{array}
\right](z, \tau)
}.
\end{equation*}
The theorem can be proved in the same way as Theorem \ref{thm:analogue-Jacobi-(1/5,1/5), (3/5,3/5)}.
\end{proof}

\subsection{Derivative formulas for $(\epsilon,\epsilon^{\prime})=(1,1/5), (1,3/5)$}

\begin{theorem}
\label{thm:analogue-Jacobi-(1,1/5), (1,3/5)}
{\it
For every $\tau\in\mathbb{H}^2,$ we have
\begin{equation*}
\frac{
\theta^{\prime}
\left[
\begin{array}{c}
1 \\
\frac15
\end{array}
\right]
}
{
\theta
\left[
\begin{array}{c}
1 \\
\frac15
\end{array}
\right]
}
=
\theta^{\prime}
\left[
\begin{array}{c}
1 \\
1
\end{array}
\right]
\frac
{
\left(
\theta^5
\left[
\begin{array}{c}
1 \\
\frac15
\end{array}
\right]
-
3
\theta^5
\left[
\begin{array}{c}
1 \\
\frac35
\end{array}
\right]
\right)
}
{
10
\theta^3
\left[
\begin{array}{c}
1 \\
\frac15
\end{array}
\right]
\theta^3
\left[
\begin{array}{c}
1 \\
\frac35
\end{array}
\right]
}, \,\,
\frac
{
\theta^{\prime}
\left[
\begin{array}{c}
1 \\
\frac35
\end{array}
\right]
}
{
\theta
\left[
\begin{array}{c}
1 \\
\frac35
\end{array}
\right]
}
=
\theta^{\prime}
\left[
\begin{array}{c}
1 \\
1
\end{array}
\right]
\frac
{
\left(
3
\theta^5
\left[
\begin{array}{c}
1 \\
\frac15
\end{array}
\right]
+
\theta^5
\left[
\begin{array}{c}
1 \\
\frac35
\end{array}
\right]
\right)
}
{
10
\theta^3
\left[
\begin{array}{c}
1 \\
\frac15
\end{array}
\right]
\theta^3
\left[
\begin{array}{c}
1 \\
\frac35
\end{array}
\right]
}.
\end{equation*}
}
\end{theorem}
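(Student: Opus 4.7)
Following the template used in Theorems \ref{thm:analogue-Jacobi-(1/5,1/5), (3/5,3/5)}--\ref{thm:analogue-Jacobi-(1/5,9/5), (3/5,7/5)}, the plan is to introduce the two elliptic functions
\begin{equation*}
\varphi(z)
=
\frac{\theta^3\left[\begin{array}{c}1\\1\end{array}\right](z,\tau)}
     {\theta^2\left[\begin{array}{c}1\\\frac15\end{array}\right](z,\tau)\,
      \theta\left[\begin{array}{c}1\\\frac35\end{array}\right](z,\tau)}
\quad\textrm{and}\quad
\psi(z)
=
\frac{\theta^3\left[\begin{array}{c}1\\1\end{array}\right](z,\tau)}
     {\theta^2\left[\begin{array}{c}1\\\frac35\end{array}\right](z,\tau)\,
      \theta\left[\begin{array}{c}-1\\\frac15\end{array}\right](z,\tau)},
\end{equation*}
to check ellipticity via equation (\ref{eqn:integer-char}) (the sum of characteristics in denominator equals that of the numerator modulo $2\mathbb{Z}^2$, since $m=0$ in every shift here, no exponential factor appears, which is why the final formulas carry no power of $\zeta_5$), then apply the residue theorem.

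First I would locate the poles of $\varphi$ inside the fundamental parallelogram. Since $\theta[1,\epsilon']$ has its unique zero at $\zeta=(1-\epsilon')/2$, the poles are a double pole at $z=2/5$ coming from $\theta^2[1,\tfrac15]$ and a simple pole at $z=1/5$ coming from $\theta[1,\tfrac35]$. For the simple pole, the standard formula together with the shift relation (\ref{eqn:real-char}) with $m=0$, which gives $\theta[1,\frac35](\zeta+\tfrac15,\tau)=\theta[1,1](\zeta,\tau)$, identifies $\theta'[1,\tfrac35](1/5,\tau)$ with $\theta'[1,1]$; similar shifts yield $\theta[1,\tfrac15](1/5,\tau)=\theta[1,\tfrac35]$ and $\theta[1,1](1/5,\tau)=-\theta[1,\tfrac35]$, so that
$$
\textrm{Res}\left(\varphi,\tfrac15\right)
=-\frac{\theta\left[\begin{array}{c}1\\\frac35\end{array}\right]}
{\theta^{\prime}\left[\begin{array}{c}1\\1\end{array}\right]}.
$$
For the double pole at $z=2/5$, I would set $u=z-2/5$ and expand each theta factor to order $u^2$, noting that the odd function $\theta[1,1](u,\tau)$ gives $\theta[1,\tfrac15](z,\tau)=\theta'[1,1]u+O(u^3)$, and that further applications of (\ref{eqn:real-char}) and (\ref{eqn:integer-char}) identify $\theta[1,1](2/5,\tau)=-\theta[1,\tfrac15]$, $\theta'[1,1](2/5,\tau)=\theta'[1,\tfrac15]$, $\theta[1,\tfrac35](2/5,\tau)=-\theta[1,\tfrac35]$ and $\theta'[1,\tfrac35](2/5,\tau)=\theta'[1,\tfrac35]$. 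Collecting the coefficient of $u$ in $u^2\varphi(z)$ will give
$$
\textrm{Res}\left(\varphi,\tfrac25\right)
=\frac{\theta^3\left[\begin{array}{c}1\\\frac15\end{array}\right]}
{(\theta'[1,1])^2\,\theta\left[\begin{array}{c}1\\\frac35\end{array}\right]}
\left(-3\,\frac{\theta'\left[\begin{array}{c}1\\\frac15\end{array}\right]}{\theta\left[\begin{array}{c}1\\\frac15\end{array}\right]}
+\frac{\theta'\left[\begin{array}{c}1\\\frac35\end{array}\right]}{\theta\left[\begin{array}{c}1\\\frac35\end{array}\right]}\right).
$$

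Setting the sum of residues of $\varphi$ equal to zero (Theorem \ref{thm-fundamental-elliptic-function}) produces the first linear relation
$$
3\,\frac{\theta'[1,\tfrac15]}{\theta[1,\tfrac15]}-\frac{\theta'[1,\tfrac35]}{\theta[1,\tfrac35]}
=-\frac{\theta'[1,1]\,\theta^2[1,\tfrac35]}{\theta^3[1,\tfrac15]},
$$
and performing the entirely parallel calculation with $\psi$ (now with double pole at $z=1/5$ and simple pole at $z=3/5$) yields the companion relation
$$
\frac{\theta'[1,\tfrac15]}{\theta[1,\tfrac15]}+3\,\frac{\theta'[1,\tfrac35]}{\theta[1,\tfrac35]}
=\frac{\theta'[1,1]\,\theta^2[1,\tfrac15]}{\theta^3[1,\tfrac35]}.
$$
The theorem then follows by solving this $2\times 2$ linear system for $\theta'[1,\tfrac15]/\theta[1,\tfrac15]$ and $\theta'[1,\tfrac35]/\theta[1,\tfrac35]$; the determinant is $10$, which produces the factor $1/10$ in the stated formula.

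The main obstacle is the bookkeeping at the double pole: reducing $\theta[1,1]$, $\theta'[1,1]$, and $\theta''[1,1]$ evaluated at $2/5$ to the constants $\theta[1,\tfrac15]$, $\theta'[1,\tfrac15]$ via the shift identity (\ref{eqn:real-char}) combined with $\theta[\epsilon,\epsilon'+2]=\exp(\pi i\epsilon)\theta[\epsilon,\epsilon']$ and the evenness of theta constants in $(\epsilon,\epsilon')$, and keeping signs straight when extracting the coefficient of $u$ in the expansion of $u^2\varphi(z)$. Once those identifications are made, the rest is linear algebra.
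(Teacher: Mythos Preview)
Your approach is exactly the paper's: it too introduces $\varphi(z)=\theta^3[1,1]/(\theta^2[1,\tfrac15]\theta[1,\tfrac35])$ and $\psi(z)=\theta^3[1,1]/(\theta^2[1,\tfrac35]\theta[1,-\tfrac15])$, applies the residue theorem as in Theorem~\ref{thm:analogue-Jacobi-(1/5,1/5), (3/5,3/5)}, and solves the resulting $2\times 2$ system. The only slip is in your display for $\psi$: you wrote the single factor as $\theta\bigl[\begin{smallmatrix}-1\\1/5\end{smallmatrix}\bigr]$, but that equals $\theta\bigl[\begin{smallmatrix}1\\1/5\end{smallmatrix}\bigr]$ and has its zero at $z=2/5$, not at the $z=3/5$ you (correctly) use later; the intended characteristic is $\bigl[\begin{smallmatrix}1\\-1/5\end{smallmatrix}\bigr]$, matching both the paper and your pole analysis.
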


\begin{proof}
Consider the following elliptic functions:
\begin{equation*}
\varphi(z)
=
\frac
{
\theta^3
\left[
\begin{array}{c}
1 \\
1
\end{array}
\right](z, \tau)
}
{
\theta^2
\left[
\begin{array}{c}
1 \\
\frac15
\end{array}
\right](z, \tau)
\theta
\left[
\begin{array}{c}
1 \\
\frac35
\end{array}
\right](z, \tau)
}
\,\,
\mathrm{and}
\,\,
\psi(z)=
\frac
{
\theta^3
\left[
\begin{array}{c}
1 \\
1
\end{array}
\right](z, \tau)
}
{
\theta^2
\left[
\begin{array}{c}
1 \\
\frac35
\end{array}
\right](z, \tau)
\theta
\left[
\begin{array}{c}
1 \\
-\frac15
\end{array}
\right](z, \tau)
}.
\end{equation*}
The theorem can be proved in the same way as Theorem \ref{thm:analogue-Jacobi-(1/5,1/5), (3/5,3/5)}.
\end{proof}

\section{Derivative formulas of level 6 }
\label{sec:derivative-level6}

From the discussion of Farkas and Kra \cite[pp. 204-213]{Farkas-Kra},
we have only to consider the case where
\begin{equation*}
(\epsilon, \epsilon^{\prime})=(0,1/3), (0,2/3), (1/3,j/3), (2/3,k/3), (1,2/3),  (j=0,2,4, \,k=0,1,2,3,4,5).
\end{equation*}
In this section,  for $n\in\mathbb{N},$ we set
\begin{equation*}
\left(\frac{n}{3}\right)
=
\begin{cases}
+1, \,\,&\text{if}  \,\,n\equiv 1  \,\,(\mathrm{mod} \, 3),  \\
-1, \,\,&\text{if}  \,\,n\equiv -1 \,\,(\mathrm{mod} \, 3),  \\
0, \,\,&\text{if}  \,\,n\equiv 0 \,\,(\mathrm{mod} \, 3).  \\
\end{cases}
\end{equation*}

\subsection{Derivative formulas for $(\epsilon, \epsilon^{\prime})=(0,1/3), (0,2/3)$}

\begin{theorem}
\label{thm:analogue-Jacobi-(0,1/3), (0,2/3)}
{\it
For every $\tau\in\mathbb{H}^2,$ we have
\begin{equation*}
\theta^{\prime}
\left[
\begin{array}{c}
0 \\
\frac13
\end{array}
\right]
=
\frac13
\frac
{
\theta^{\prime}
\left[
\begin{array}{c}
1 \\
1
\end{array}
\right]
\theta^3
\left[
\begin{array}{c}
1 \\
\frac13
\end{array}
\right]
}
{
\theta^2
\left[
\begin{array}{c}
0 \\
\frac13
\end{array}
\right]
\theta
\left[
\begin{array}{c}
0 \\
1
\end{array}
\right]
}
\,\,
\text{and}
\,\,
\theta^{\prime}
\left[
\begin{array}{c}
0 \\
\frac23
\end{array}
\right]
=
\frac13
\frac
{
\theta^{\prime}
\left[
\begin{array}{c}
1 \\
1
\end{array}
\right]
\theta^3
\left[
\begin{array}{c}
1 \\
\frac13
\end{array}
\right]
}
{
\theta^2
\left[
\begin{array}{c}
0 \\
\frac23
\end{array}
\right]
\theta
\left[
\begin{array}{c}
0 \\
0
\end{array}
\right]
}.
\end{equation*}
}
\end{theorem}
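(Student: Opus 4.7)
The plan is to extend the residue-theorem strategy of Theorems \ref{thm:analogue-Jacobi-(1,1/2), (0,1/2)} and \ref{thm:analogue-Jacobi-(1/5,1/5), (3/5,3/5)}; for brevity I write $\theta[\epsilon,\epsilon']$ in place of $\theta\left[\begin{array}{c}\epsilon \\ \epsilon^{\prime}\end{array}\right]$. I would introduce the two elliptic functions
\begin{equation*}
\varphi(z) = \frac{\theta^3[1,1](z,\tau)}{\theta^2[0,1/3](z,\tau)\,\theta[1,1/3](z,\tau)},\qquad \psi(z) = \frac{\theta^3[1,1](z,\tau)}{\theta^2[0,2/3](z,\tau)\,\theta[1,5/3](z,\tau)}.
\end{equation*}
By (\ref{eqn:integer-char}), both are doubly periodic: for $\varphi$ the denominator characteristics give the odd combinations $2(0)+1=1$ and $2(1/3)+1/3=1$, while for $\psi$ one has $2(0)+1=1$ and $2(2/3)+5/3=3$, so the quasi-periodic multipliers under $z\mapsto z+1$ and $z\mapsto z+\tau$ cancel.

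In the fundamental parallelogram $\varphi$ has a double pole at $z_0 = \tau/2 + 1/3$ and a simple pole at $z_1 = 1/3$, while $\psi$ has a double pole at $z_0' = \tau/2 + 1/6$ and a simple pole at $z_1' = 2/3$. The translation formula (\ref{eqn:real-char}) reduces every theta value at these points to a theta constant. For $\varphi$, using $(m,n)=(0,2/3)$ I find $\theta[1,1](z_1)=-\theta[1,1/3]$, $\theta[0,1/3](z_1)=\theta[0,1]$, and $\theta^{\prime}[1,1/3](z_1)=\theta^{\prime}[1,1]$, yielding $\mathrm{Res}(\varphi,z_1) = -\theta^3[1,1/3]/(\theta^2[0,1]\,\theta^{\prime}[1,1])$. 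Near $z_0$, (\ref{eqn:real-char}) with $(m,n)=(1,2/3)$ gives $\theta[0,1/3](z_0+\zeta)=-ie^{-\pi i\tau/4-\pi i\zeta}\theta[1,1](\zeta,\tau)$, $\theta[1,1/3](z_0+\zeta)=-ie^{-\pi i\tau/4-\pi i\zeta}\theta[0,1](\zeta,\tau)$, and $\theta[1,1](z_0+\zeta) = e^{-\pi i\tau/4-5\pi i/6-\pi i\zeta}\theta[0,1/3](-\zeta,\tau)$; expanding each in $\zeta$ using the oddness of $\theta[1,1]$ and the evenness of $\theta[0,1]$ and substituting into
\begin{equation*}
\mathrm{Res}(\varphi,z_0) = \left.\frac{d}{dz}\left[(z-z_0)^2\varphi(z)\right]\right|_{z=z_0}
\end{equation*}
produces an expression in which $\theta^{\prime}[0,1/3]$ enters exactly through the linear coefficient of $\theta[0,1/3](-\zeta)$.

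The three $\pi i$ terms arising from the exponential prefactors in the ratios $h_1/h_0$, $g_1/g_0$ and $f_2/f_1$ cancel exactly, so the residue at $z_0$ reduces to a constant multiple of $\theta^{\prime}[0,1/3]/\theta[0,1/3]$; applying Theorem \ref{thm-fundamental-elliptic-function} to set $\mathrm{Res}(\varphi,z_0)+\mathrm{Res}(\varphi,z_1)=0$ and solving then gives the first formula. The parallel computation with $\psi$ uses $\theta[1,5/3](z_0'+\zeta) = -e^{-\pi i\tau/4-\pi i\zeta}\theta[0,0](\zeta,\tau)$ to bring $\theta[0,0]$ into the denominator, together with the simple-pole residue $\mathrm{Res}(\psi,z_1')=\theta^3[1,1/3]/(\theta^2[0,0]\,\theta^{\prime}[1,1])$, and yields the second formula. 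The main obstacle is the bookkeeping of the double-pole residue: one must track the first-order Taylor coefficients of three theta functions at $z_0$ (resp.\ $z_0'$) together with the exponential prefactors from (\ref{eqn:real-char}); once arranged, the $\pi i$ cancellation above is what makes the final linear equation in $\theta^{\prime}[0,1/3]$ (resp.\ $\theta^{\prime}[0,2/3]$) close cleanly.
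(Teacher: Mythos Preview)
Your approach is essentially identical to the paper's: the same elliptic function $\varphi$ and (up to an irrelevant global sign, since $\theta[1,5/3]=-\theta[1,-1/3]$) the same $\psi$, followed by the residue theorem exactly as in Theorem~\ref{thm:analogue-Jacobi-(1,1/2), (0,1/2)}. One small simplification: the $e^{-\pi i\zeta}$ prefactors from (\ref{eqn:real-char}) cancel uniformly (three in the numerator against three in the denominator) before any Taylor expansion, so the ``$\pi i$ cancellation'' you track by hand is automatic and the double-pole residue reduces immediately to the logarithmic derivative of $\theta[0,1/3]$ (resp.\ $\theta[0,2/3]$).
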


\begin{proof}
Consider the following elliptic functions:
\begin{equation*}
\varphi(z)
=
\frac
{
\theta^3
\left[
\begin{array}{c}
1 \\
1
\end{array}
\right](z, \tau)
}
{
\theta^2
\left[
\begin{array}{c}
0 \\
\frac13
\end{array}
\right](z, \tau)
\theta
\left[
\begin{array}{c}
1 \\
\frac13
\end{array}
\right](z, \tau)
}
\,\,
\mathrm{and}
\,\,
\psi(z)=
\frac
{
\theta^3
\left[
\begin{array}{c}
1 \\
1
\end{array}
\right](z, \tau)
}
{
\theta^2
\left[
\begin{array}{c}
0 \\
\frac23
\end{array}
\right](z, \tau)
\theta
\left[
\begin{array}{c}
1 \\
-\frac13
\end{array}
\right](z, \tau)
}.
\end{equation*}
The theorem can be proved in the same way as Theorem \ref{thm:analogue-Jacobi-(1,1/2), (0,1/2)}.
\end{proof}

By Jacobi's triple product identity (\ref{eqn:Jacobi-triple}),
we obtain the following corollaries:

\begin{corollary}
\label{coro:pro-series-(0,1/3)}
{\it
For every $\tau\in\mathbb{H}^2,$ we have
\begin{equation}
\frac{\eta^5(2\tau)}{\eta^2(\tau)}
=
\sum_{n=1}^{\infty} (-1)^{n-1}n\left(\frac{n}{3}\right) \exp \left(\frac{2\pi i n^2\tau}{3} \right)
\end{equation}
and
\begin{equation}
\frac{\eta(\tau) \eta^6(6\tau)}{ \eta^2(2\tau)\eta^3(3\tau)}
=
\sum_{n=1}^{\infty} (d_{1,3}^{*}(n)-d_{2,3}^{*}(n)) q^n, \quad q=\exp(2\pi i \tau).
\end{equation}
}
\end{corollary}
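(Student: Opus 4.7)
The plan is to derive both identities from the first formula of Theorem~\ref{thm:analogue-Jacobi-(0,1/3), (0,2/3)} (for $\theta'[0,1/3]$) by combining Jacobi's derivative formula~(\ref{eqn:Jacobi-derivative}) with Jacobi's triple product identity~(\ref{eqn:Jacobi-triple}) and direct $q$-expansion of theta functions. First I substitute $\theta'[1,1]=-\pi\,\theta[0,0]\,\theta[1,0]\,\theta[0,1]$ into Theorem~\ref{thm:analogue-Jacobi-(0,1/3), (0,2/3)} to express $\theta'[0,1/3](0,\tau)$ as an explicit rational monomial in theta constants. Applying Jacobi's triple product to each such theta constant, the paired cyclotomic factors $(1+\zeta_6^{\pm 1}x^{2n-1})$ telescope via $(1+\zeta_6 y)(1+\zeta_6^{-1}y)=1+y+y^2=(1-y^3)/(1-y)$, reducing the infinite product to a Dedekind $\eta$-quotient (with $x=e^{\pi i\tau}$).

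For the K\"ohler--Macdonald identity, I would expand $\theta'[0,1/3](0,\tau)$ directly from its defining series: pairing $n$ with $-n$ yields $-4\pi\sum_{n\ge 1}n\sin(\pi n/3)\,e^{\pi i n^2\tau}$, and the elementary identity $\sin(\pi n/3)=\tfrac{\sqrt{3}}{2}(-1)^{n-1}\bigl(\tfrac{n}{3}\bigr)$ converts this to $-2\pi\sqrt{3}\sum_{n\ge 1}(-1)^{n-1}n\bigl(\tfrac{n}{3}\bigr)e^{\pi i n^2\tau}$. The substitution $\tau\mapsto 2\tau/3$ aligns the exponent with the target $e^{2\pi i n^2\tau/3}$ and simultaneously reduces the right-hand $\eta$-quotient to $-2\pi\sqrt{3}\,\eta^5(2\tau)/\eta^2(\tau)$, giving the first identity.

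For the second identity, whose right-hand side is a weight-$1$ Lambert series that cannot come from $\theta'$ alone (which has weight $3/2$), I would instead work with the logarithmic derivative. Dividing the formula by $\theta[0,1/3](0,\tau)$ yields $(\theta'/\theta)[0,1/3](0,\tau)$ as a rational expression in theta constants, which (by Jacobi's triple product and the substitution $\tau\mapsto 2\tau$) becomes $-2\pi\sqrt{3}\cdot\eta(\tau)\eta^6(6\tau)/(\eta^2(2\tau)\eta^3(3\tau))$. On the series side, differentiating $\log\theta[0,1/3](\zeta,\tau)$ in $\zeta$ at $\zeta=0$ via the product formula of (\ref{eqn:Jacobi-triple}) and summing the resulting geometric series produces $-2\pi\sqrt{3}\sum_{k\ge 1}\bigl(\tfrac{k}{3}\bigr)x^k/(1-x^{2k})$ — here the double sign $(-1)^{k-1}$ from $y/(1+y)=\sum(-1)^{k-1}y^k$ cancels the $(-1)^{k-1}$ from $\sin(\pi k/3)$, removing the alternation; after $\tau\mapsto 2\tau$ this matches $-2\pi\sqrt{3}\sum_n(d^*_{1,3}(n)-d^*_{2,3}(n))q^n$.

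The main obstacle is the bookkeeping needed to identify each infinite product with a specific $\eta$-quotient: one must track fractional powers of $x=e^{\pi i\tau}$ carefully, apply standard relations such as $\prod(1-x^{2n})=x^{-1/12}\eta(\tau)$ and $\prod(1-x^{2n-1})=x^{1/24}\eta(\tau/2)/\eta(\tau)$, and verify that the fractional prefactors cancel so that both sides agree as $q$-series with the claimed leading exponent ($q^{1/3}$ for the first identity, $q^1$ for the second). Once this conversion is carried out, both stated identities follow.
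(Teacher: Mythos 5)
Your proposal is correct and follows essentially the same route as the paper: the paper's (one-line) proof likewise derives the K\"ohler--Macdonald identity by comparing the series expansion of $\theta^{\prime}\!\left[\begin{smallmatrix}0\\ \frac13\end{smallmatrix}\right]$ with the eta-quotient obtained from Theorem \ref{thm:analogue-Jacobi-(0,1/3), (0,2/3)} via Jacobi's derivative formula and the triple product, and obtains the second identity from the logarithmic derivative $\theta^{\prime}\!\left[\begin{smallmatrix}0\\ \frac13\end{smallmatrix}\right]/\theta\!\left[\begin{smallmatrix}0\\ \frac13\end{smallmatrix}\right]$, whose Lambert-series expansion yields the divisor functions $d_{1,3}^{*}-d_{2,3}^{*}$. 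Your accounting of the sign identity $\sin(\pi n/3)=\tfrac{\sqrt3}{2}(-1)^{n-1}\left(\tfrac{n}{3}\right)$, the cancellation of the alternating signs in the logarithmic-derivative case, and the rescalings $\tau\mapsto 2\tau/3$ and $\tau\mapsto 2\tau$ are all consistent with what the paper's argument requires.
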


\begin{proof}
The corollary follows from 
$
\theta^{\prime}
\left[
\begin{array}{c}
0 \\
\frac13
\end{array}
\right]
$
and
$
\theta^{\prime}
\left[
\begin{array}{c}
0 \\
\frac13
\end{array}
\right]/
\theta
\left[
\begin{array}{c}
0 \\
\frac13
\end{array}
\right].
$
\end{proof}

\begin{corollary}
\label{coro:pro-series-(0,2/3)}
{\it
For every $\tau\in\mathbb{H}^2,$ we have
\begin{equation}
\frac{\eta^2(\tau) \eta^2(4\tau)}{\eta(2\tau)}
=
\sum_{n=1}^{\infty} n\left(\frac{n}{3}\right) \exp \left(\frac{2\pi i n^2\tau}{3} \right)
\end{equation}
and
\begin{equation}
\frac{\eta(2\tau) \eta^3(3\tau) \eta^3(12\tau)}{ \eta(\tau)\eta(4\tau) \eta^3(6\tau)}
=
\sum_{n=1}^{\infty} (d_{1,6}^{*}(n)+d_{2,6}^{*}(n)-d_{4,6}^{*}(n)-d_{5,6}^{*}(n)) q^n, \quad q=\exp(2\pi i \tau).
\end{equation}
}
\end{corollary}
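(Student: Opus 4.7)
The plan is to apply the same strategy as in the proof of Corollary \ref{coro:pro-series-(0,1/3)}: evaluate the theta derivative (and the logarithmic ratio $\theta'/\theta$) in two ways and match the results. Throughout, I would work with the auxiliary variable $\sigma = 2\tau/3$, so that $e^{\pi i n^2 \sigma} = \exp(2\pi i n^2\tau/3)$, matching the exponential factor on the right-hand side of the corollary.

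For the first identity, the defining series of $\theta'[0;2/3](0,\sigma)$ gives, after pairing the $\pm n$ terms and applying $\sin(2\pi n/3) = (\sqrt{3}/2)(n/3)$, the expansion $-2\pi\sqrt{3}\sum_{n\geq 1} n(n/3) e^{\pi i n^2 \sigma}$. On the other hand, Theorem \ref{thm:analogue-Jacobi-(0,1/3), (0,2/3)} together with $\theta'[1;1](0,\sigma) = -2\pi\eta^3(\sigma)$ and Jacobi's triple product (\ref{eqn:Jacobi-triple}) rewrite $\theta'[0;2/3](0,\sigma)$ as an infinite product. The twisted factors collapse via the cyclotomic identities $(1+\zeta_3 y)(1+\zeta_3^{-1} y) = (1+y^3)/(1+y)$ (for the $\theta[0;2/3]$ factors) and $(1+\zeta_6 y)(1+\zeta_6^{-1} y) = (1-y^3)/(1-y)$ (for the $\theta[1;1/3]$ factors), producing an eta quotient in $\sigma$; substituting $\sigma = 2\tau/3$ and matching against the series side identifies the LHS as $\eta^2(\tau)\eta^2(4\tau)/\eta(2\tau)$.

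For the second identity, I apply the two-sided strategy to the ratio $\theta'[0;2/3]/\theta[0;2/3]$. On the product side, the theorem gives this ratio as $(1/3)\,\theta'[1;1]\theta^3[1;1/3]/(\theta^3[0;2/3]\,\theta[0;0])$, which collapses into an eta quotient via (\ref{eqn:Jacobi-triple}) in the same manner. On the series side, logarithmic differentiation of the infinite-product expansion of $\theta[0;2/3](\zeta,\sigma)$ at $\zeta = 0$ produces geometric series whose paired contributions involve $\zeta_3^{-k} - \zeta_3^{k} = -i\sqrt{3}\,(k/3)$. Reorganizing the resulting double sum by $N = (2n-1)k$ yields a Lambert series $2\pi\sqrt{3}\sum_N c_N x^N$ with $c_N = \sum_{d\mid N,\, N/d \text{ odd}} (-1)^{d-1}(d/3)$. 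Tabulating $(-1)^{d-1}(d/3)$ modulo $6$ gives $+1$ for $d\equiv 1,2$, $-1$ for $d\equiv 4,5$, and $0$ otherwise, so $c_N = d_{1,6}^{*}(N) + d_{2,6}^{*}(N) - d_{4,6}^{*}(N) - d_{5,6}^{*}(N)$.

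The main obstacle is the algebraic bookkeeping on the product side: tracking the shifted indices $x^{2n-1}, x^{2n}, x^{2n-2}$ appearing in the Jacobi triple product with their $\zeta_3$- and $\zeta_6$-twists (and sign conventions), and then recognizing that after the rescaling $\sigma = 2\tau/3$ the resulting eta quotient is exactly the asserted quotient in $\eta(\tau), \eta(2\tau), \eta(3\tau), \eta(4\tau), \eta(6\tau), \eta(12\tau)$. This requires a careful separation of odd- and even-indexed factors and repeated applications of the $(1\pm y^3)/(1\pm y)$ collapse to eliminate the cube-root-of-unity twists in favor of arguments $3\sigma = 2\tau$ and $6\sigma = 4\tau$.
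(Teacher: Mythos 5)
Your strategy is the same as the paper's: the corollary is obtained by evaluating $\theta^{\prime}\left[\begin{smallmatrix}0\\ \frac23\end{smallmatrix}\right]$ and $\theta^{\prime}\left[\begin{smallmatrix}0\\ \frac23\end{smallmatrix}\right]\big/\theta\left[\begin{smallmatrix}0\\ \frac23\end{smallmatrix}\right]$ in two ways, once from the defining series (resp.\ the logarithmic derivative of the triple product) and once from Theorem \ref{thm:analogue-Jacobi-(0,1/3), (0,2/3)} combined with (\ref{eqn:Jacobi-triple}); the paper's proof is exactly this, stated in one line. Your treatment of the first identity is correct: at the argument $\sigma=2\tau/3$ both sides carry the common constant $-2\sqrt{3}\,\pi$, coming from $\theta^{\prime}\left[\begin{smallmatrix}1\\ 1\end{smallmatrix}\right]=-2\pi\eta^{3}$ together with the factor $\sqrt{3}$ produced by the $\zeta_6$-collapse in $\theta\left[\begin{smallmatrix}1\\ \frac13\end{smallmatrix}\right]$, and the two cyclotomic identities you quote are the right ones.

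The one concrete misstep is the evaluation point for the second identity. The two identities live at different scales: the first is a series in $e^{2\pi i\tau/3}$, while the second is a series in integer powers of $q=e^{2\pi i\tau}$. If you logarithmically differentiate $\theta\left[\begin{smallmatrix}0\\ \frac23\end{smallmatrix}\right](\zeta,\sigma)$ with $\sigma=2\tau/3$, the nome is $e^{\pi i\sigma}=q^{1/3}$, so your Lambert series is $\sum_N c_N q^{N/3}$ and the matching eta quotient involves $\eta(\tau/3),\eta(2\tau/3),\eta(4\tau/3)$ --- a true identity, but not the one asserted. You must instead evaluate at $2\tau$, where the nome is $q$; then $\theta\left[\begin{smallmatrix}0\\ \frac23\end{smallmatrix}\right](0,2\tau)=\prod_{n\ge1}(1-q^{2n})(1+q^{6n-3})/(1+q^{2n-1})=\eta(\tau)\eta(4\tau)\eta^{2}(6\tau)/\bigl(\eta(2\tau)\eta(3\tau)\eta(12\tau)\bigr)$, $\theta\left[\begin{smallmatrix}1\\ \frac13\end{smallmatrix}\right](0,2\tau)=\sqrt{3}\,\eta(6\tau)$, and the quotient from the theorem collapses to $-2\sqrt{3}\,\pi$ times the asserted eta quotient. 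A further small point: the Lambert series should come with the factor $-2\sqrt{3}\,\pi$, not $+2\sqrt{3}\,\pi$ (the $\zeta$-derivative of the factor $1+\zeta_3^{-1}x^{2n-1}z^{-1}$ enters with a minus sign, so the relevant combination is $\zeta_3^{k}-\zeta_3^{-k}$); this is harmless because the same constant multiplies the product side, but it needs to be tracked consistently when you divide it out.
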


\begin{proof}
The corollary follows from 
$
\theta^{\prime}
\left[
\begin{array}{c}
0 \\
\frac23
\end{array}
\right]
$
and
$
\theta^{\prime}
\left[
\begin{array}{c}
0 \\
\frac23
\end{array}
\right]/
\theta
\left[
\begin{array}{c}
0 \\
\frac23
\end{array}
\right].
$
\end{proof}

\subsection{Derivative formulas for $(\epsilon, \epsilon^{\prime})=(1/3,0), (1/3,2/3), (1/3,4/3)$}

\begin{theorem}
\label{thm:analogue-Jacobi-(1/3,0), (1/3,2/3), (1/3,4/3)}
{\it
For every $\tau\in\mathbb{H}^2,$ we have
\begin{equation*}
\theta^{\prime}
\left[
\begin{array}{c}
\frac13 \\
0
\end{array}
\right]
=
-
\frac13
\frac
{
\theta^{\prime}
\left[
\begin{array}{c}
1 \\
1
\end{array}
\right]
\theta^3
\left[
\begin{array}{c}
\frac13 \\
1
\end{array}
\right]
}
{
\theta^2
\left[
\begin{array}{c}
\frac13 \\
0
\end{array}
\right]
\theta
\left[
\begin{array}{c}
1 \\
0
\end{array}
\right]
},
\,\,
\theta^{\prime}
\left[
\begin{array}{c}
\frac13 \\
\frac23
\end{array}
\right]
=
-
\frac13
\frac
{
\theta^{\prime}
\left[
\begin{array}{c}
1 \\
1
\end{array}
\right]
\theta^3
\left[
\begin{array}{c}
\frac13 \\
\frac53
\end{array}
\right]
}
{
\theta^2
\left[
\begin{array}{c}
\frac13 \\
\frac23
\end{array}
\right]
\theta
\left[
\begin{array}{c}
1 \\
0
\end{array}
\right]
},
\,\,
\text{and}
\,\,
\theta^{\prime}
\left[
\begin{array}{c}
\frac13 \\
\frac43
\end{array}
\right]
=
\frac13
\frac
{
\theta^{\prime}
\left[
\begin{array}{c}
1 \\
1
\end{array}
\right]
\theta^3
\left[
\begin{array}{c}
\frac13 \\
\frac13
\end{array}
\right]
}
{
\theta^2
\left[
\begin{array}{c}
\frac13 \\
\frac43
\end{array}
\right]
\theta
\left[
\begin{array}{c}
1 \\
0
\end{array}
\right]
}.
\end{equation*}
}
\end{theorem}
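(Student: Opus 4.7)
The plan is to adapt the proof of Theorem \ref{thm:analogue-Jacobi-(0,1/3), (0,2/3)}: for each of the three characteristics, introduce an elliptic function of the form
$$
\varphi(z) = \frac{\theta^{3}\!\left[\begin{array}{c}1\\1\end{array}\right]\!(z,\tau)}{\theta^{2}\!\left[\begin{array}{c}\frac{1}{3}\\\epsilon^{\prime}\end{array}\right]\!(z,\tau)\,\theta\!\left[\begin{array}{c}\frac{1}{3}\\\delta^{\prime}\end{array}\right]\!(z,\tau)},
$$
with $\delta^{\prime}=1,\,5/3,\,1/3$ respectively in the three cases, chosen so that the total upper character $3-2/3-1/3=2$ and the total lower character $3-2\epsilon^{\prime}-\delta^{\prime}$ both lie in $2\mathbb{Z}$; by (\ref{eqn:integer-char}) this makes $\varphi$ doubly periodic with periods $(1,\tau)$. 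These choices of $\delta^{\prime}$ match exactly the second factor in the numerator of the statement.

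In the fundamental parallelogram, $\varphi$ has a simple pole at $z_{0}=\tau/3+(1-\delta^{\prime})/2$ from the zero of $\theta[\tfrac{1}{3},\delta^{\prime}]$, and a double pole at $z_{1}=\tau/3+(1-\epsilon^{\prime})/2$ from $\theta^{2}[\tfrac{1}{3},\epsilon^{\prime}]$. The simple-pole residue is straightforward: the real-shift formula (\ref{eqn:real-char}) translates each factor $\theta[\,\cdot\,](z_{0}+\zeta,\tau)$ into a constant multiple of a theta constant (or derivative) with integer characteristics at $\zeta=0$, producing the combination $\theta^{3}[\tfrac{1}{3},\delta^{\prime}]/(\theta^{2}[1,0]\,\theta^{\prime}[1,1])$ up to a root of unity, just as in Theorem \ref{thm:analogue-Jacobi-(0,1/3), (0,2/3)}. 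The main obstacle is the double-pole residue. Writing $\varphi=h/g^{2}$ with $g=\theta[\tfrac{1}{3},\epsilon^{\prime}]$ and $h=\theta^{3}[1,1]/\theta[\tfrac{1}{3},\delta^{\prime}]$, one has
$$
\mathrm{Res}(\varphi,z_{1}) = \frac{h^{\prime}(z_{1})}{g^{\prime}(z_{1})^{2}}-\frac{g^{\prime\prime}(z_{1})\,h(z_{1})}{g^{\prime}(z_{1})^{3}},
$$
and the challenge is to extract $\theta^{\prime}[\tfrac{1}{3},\epsilon^{\prime}]$ without any residual $\theta^{\prime\prime}$ of a theta constant.

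The key simplification is that after translating by $z_{1}$, $g$ becomes a constant multiple of $\theta[1,1]$, whose oddness forces $\theta^{\prime\prime}[1,1](0)=0$; consequently $g^{\prime\prime}(z_{1})/g^{\prime}(z_{1})=-4\pi i/3$, coming entirely from the linear phase $e^{-2\pi i\zeta/3}$ in the translation factor of (\ref{eqn:real-char}). Parallel translations give
$$
\frac{\theta^{\prime}[1,1]}{\theta[1,1]}\bigg|_{z_{1}} = -\frac{2\pi i}{3}-\frac{\theta^{\prime}[\tfrac{1}{3},\epsilon^{\prime}]}{\theta[\tfrac{1}{3},\epsilon^{\prime}]},\qquad \frac{\theta^{\prime}[\tfrac{1}{3},\delta^{\prime}]}{\theta[\tfrac{1}{3},\delta^{\prime}]}\bigg|_{z_{1}} = -\frac{2\pi i}{3},
$$
the latter because the shifted function is proportional to $\theta[1,0]$, which is even and hence has $\theta^{\prime}[1,0]=0$. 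Substituting these into $h^{\prime}/h = 3\theta^{\prime}[1,1]/\theta[1,1]-\theta^{\prime}[\tfrac{1}{3},\delta^{\prime}]/\theta[\tfrac{1}{3},\delta^{\prime}]$ and combining with the $g^{\prime\prime}/g^{\prime}$ term makes every $\pi i$ contribution cancel, leaving a clean factor of $-3\,\theta^{\prime}[\tfrac{1}{3},\epsilon^{\prime}]/\theta[\tfrac{1}{3},\epsilon^{\prime}]$. Imposing $\mathrm{Res}(\varphi,z_{0})+\mathrm{Res}(\varphi,z_{1})=0$ and collecting the exponential prefactors ($e^{-\pi i\tau/9}$, $e^{\pm\pi i/3}$ and the like) solves directly for $\theta^{\prime}[\tfrac{1}{3},\epsilon^{\prime}]$; the sign $\mp 1/3$ in the three cases is determined by the parity factors $e^{\pi i\epsilon^{\prime}}$ picked up when reducing $\theta[\tfrac{1}{3},\delta^{\prime}+1]$ back to the form in the statement. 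Once the case $(1/3,0)$ is carried out, the other two are structurally identical.
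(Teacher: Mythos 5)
Your proposal is correct and follows essentially the same route as the paper: you use exactly the three elliptic functions $\theta^{3}\left[\begin{smallmatrix}1\\1\end{smallmatrix}\right](z,\tau)\big/\bigl(\theta^{2}\left[\begin{smallmatrix}1/3\\\epsilon'\end{smallmatrix}\right](z,\tau)\,\theta\left[\begin{smallmatrix}1/3\\\delta'\end{smallmatrix}\right](z,\tau)\bigr)$ with $\delta'=1,5/3,1/3$ that the paper writes down, and the same residue-theorem argument (simple pole plus double pole summing to zero) that the paper invokes by reference to Theorem \ref{thm:analogue-Jacobi-(1,1/2), (0,1/2)}. Your explicit treatment of the double-pole residue --- in particular the cancellation of the $\pi i$ contributions via the oddness of $\theta\left[\begin{smallmatrix}1\\1\end{smallmatrix}\right]$ and the evenness of $\theta\left[\begin{smallmatrix}1\\0\end{smallmatrix}\right]$ --- correctly fills in the ``direct calculation'' the paper leaves implicit.
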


\begin{proof}
Consider the following elliptic functions:
\begin{equation*}
\varphi(z)
=
\frac
{
\theta^3
\left[
\begin{array}{c}
1 \\
1
\end{array}
\right](z, \tau)
}
{
\theta^2
\left[
\begin{array}{c}
\frac13 \\
0
\end{array}
\right](z, \tau)
\theta
\left[
\begin{array}{c}
\frac13 \\
1
\end{array}
\right](z, \tau)
},
\,\,
\phi(z)=
\frac
{
\theta^3
\left[
\begin{array}{c}
1 \\
1
\end{array}
\right](z, \tau)
}
{
\theta^2
\left[
\begin{array}{c}
\frac13 \\
\frac23
\end{array}
\right](z, \tau)
\theta
\left[
\begin{array}{c}
\frac13 \\
\frac53
\end{array}
\right](z, \tau)
},
\end{equation*}
and
\begin{equation*}
\psi(z)=
\frac
{
\theta^3
\left[
\begin{array}{c}
1 \\
1
\end{array}
\right](z, \tau)
}
{
\theta^2
\left[
\begin{array}{c}
\frac13 \\
\frac43
\end{array}
\right](z, \tau)
\theta
\left[
\begin{array}{c}
\frac13 \\
\frac13
\end{array}
\right](z, \tau)
}.
\end{equation*}
The theorem can be proved in the same way as Theorem \ref{thm:analogue-Jacobi-(1,1/2), (0,1/2)}.
\end{proof}

By Jacobi's triple product identity (\ref{eqn:Jacobi-triple}),
we obtain the following corollary:

\begin{corollary}
\label{pro-series-(1/3,0)}
{\it
For every $\tau\in\mathbb{H}^2,$ we have
\begin{equation}
\label{eqn:pro-series-(1/3,0)}
\frac{\eta^5(\tau)}{\eta^2(2\tau)}
=\sum_{n=1, odd}^{\infty} n \left(\frac{n}{3}\right)  \exp \left( \frac{2 \pi i n^2 \tau}{24} \right).
\end{equation}
}
\end{corollary}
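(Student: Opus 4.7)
The plan is to convert each theta constant on the right-hand side of the first identity in Theorem \ref{thm:analogue-Jacobi-(1/3,0), (1/3,2/3), (1/3,4/3)} into a Dedekind eta quotient via Jacobi's triple product identity (\ref{eqn:Jacobi-triple}), simplify to a single eta expression, and then match the result against the $q$-series produced by term-by-term differentiation of the defining theta series.

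First I would record the standard consequences of (\ref{eqn:Jacobi-triple}) and (\ref{eqn:Jacobi-derivative}): $\theta^{\prime}\left[\begin{array}{c}1\\1\end{array}\right](0,\tau)=-2\pi\eta^3(\tau)$ and $\theta\left[\begin{array}{c}1\\0\end{array}\right](0,\tau)=2\eta^2(2\tau)/\eta(\tau)$. For the characteristics with first entry $1/3$, set $Q=e^{2\pi i\tau/3}$, so that the fractional exponents $x^{2n-2/3}$ and $x^{2n-4/3}$ in (\ref{eqn:Jacobi-triple}) become $Q^{3n-1}$ and $Q^{3n-2}$. For $\left[\begin{array}{c}1/3\\1\end{array}\right]$ all three factors appear as $(1-Q^{\cdot})$ and the exponents $\{3n,3n-1,3n-2\}_{n\ge 1}$ cover all positive integers, so the product collapses to $\prod_{m\ge 1}(1-Q^m)$, yielding $\theta\left[\begin{array}{c}1/3\\1\end{array}\right](0,\tau)=e^{i\pi/6}\eta(\tau/3)$. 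For $\left[\begin{array}{c}1/3\\0\end{array}\right]$ the factors split as $\prod_{n\ge 1}(1-Q^{3n})\cdot\prod_{m\ge 1,\,3\nmid m}(1+Q^m)$; combining the elementary identity $\prod_{m\ge 1}(1+Q^m)=Q^{-1/24}\eta(2\tau/3)/\eta(\tau/3)$ with the analogous formula for the $3$-multiples gives $\theta\left[\begin{array}{c}1/3\\0\end{array}\right](0,\tau)=\eta^2(\tau)\eta(2\tau/3)/(\eta(\tau/3)\eta(2\tau))$.

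Substituting these eta expressions into the formula of Theorem \ref{thm:analogue-Jacobi-(1/3,0), (1/3,2/3), (1/3,4/3)} and cancelling common factors of $\eta(\tau)$, $\eta(2\tau)$, and $\eta(\tau/3)$ collapses the right-hand side to $\theta^{\prime}\left[\begin{array}{c}1/3\\0\end{array}\right](0,\tau)=(\pi i/3)\eta^5(\tau/3)/\eta^2(2\tau/3)$. Rescaling $\tau\mapsto 3\tau$ then yields $\theta^{\prime}\left[\begin{array}{c}1/3\\0\end{array}\right](0,3\tau)=(\pi i/3)\eta^5(\tau)/\eta^2(2\tau)$, which is the left-hand side of (\ref{eqn:pro-series-(1/3,0)}) up to the prefactor $\pi i/3$. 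For the other side I would compute the same quantity directly from the defining series: term-by-term differentiation gives $\theta^{\prime}\left[\begin{array}{c}1/3\\0\end{array}\right](0,3\tau)=2\pi i\sum_{n\in\mathbb{Z}}(n+1/6)e^{3\pi i(n+1/6)^2\tau}$, and the substitution $m=6n+1$ rewrites this as $(\pi i/3)\sum_{m\equiv 1\pmod 6}m\cdot q^{m^2/24}$ with $q=e^{2\pi i\tau}$. Pairing the $m>0$ and $m<0$ terms and using that the positive odd integers coprime to $3$ are exactly those $\equiv\pm 1\pmod 6$, with $\left(\frac{n}{3}\right)=\pm 1$ on these respective classes, converts the sum into $(\pi i/3)\sum_{n\ge 1,\,n\text{ odd}}n\left(\frac{n}{3}\right)q^{n^2/24}$. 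Equating the two expressions and dividing by $\pi i/3$ proves (\ref{eqn:pro-series-(1/3,0)}).

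The main obstacle is the derivation of the eta quotient for $\theta\left[\begin{array}{c}1/3\\0\end{array}\right]$: unlike the $\left[\begin{array}{c}1/3\\1\end{array}\right]$ case, its triple product does not telescope, and one must organize the infinite product by residues modulo $3$ and track the $Q$- and $q$-prefactors carefully so that they cancel exactly. Once that identification is in hand, the subsequent algebra and the sum-reindexing are essentially bookkeeping.
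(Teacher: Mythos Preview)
Your proposal is correct and follows exactly the route the paper indicates: apply Jacobi's triple product identity to each theta constant in the derivative formula for $(\epsilon,\epsilon')=(1/3,0)$, simplify to an eta quotient, and compare with the term-by-term differentiated series. The paper's proof is the single line ``follows from the derivative formula for $(\epsilon,\epsilon')=(1/3,0)$'' together with the preamble ``By Jacobi's triple product identity,'' and your write-up supplies precisely the computation that this line omits.
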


\begin{proof}
The corollary follows from the derivative formula for $(\epsilon, \epsilon^{\prime})=(1/3,0).$
\end{proof}

\subsection{Derivative formulas for $(\epsilon, \epsilon^{\prime})=(2/3,0), (2/3,1)$}

\begin{theorem}
\label{thm:analogue-Jacobi-(2/3,0), (2/3,1)}
{\it
For every $\tau\in\mathbb{H}^2,$ we have
\begin{equation*}
\theta^{\prime}
\left[
\begin{array}{c}
\frac23 \\
0
\end{array}
\right]
=
-
\frac13
\frac
{
\theta^{\prime}
\left[
\begin{array}{c}
1 \\
1
\end{array}
\right]
\theta^3
\left[
\begin{array}{c}
\frac13 \\
1
\end{array}
\right]
}
{
\theta^2
\left[
\begin{array}{c}
\frac23 \\
0
\end{array}
\right]
\theta
\left[
\begin{array}{c}
0 \\
0
\end{array}
\right]
}
\,\,
\text{and}
\,\,
\theta^{\prime}
\left[
\begin{array}{c}
\frac23 \\
1
\end{array}
\right]
=
\frac13
\frac
{
\theta^{\prime}
\left[
\begin{array}{c}
1 \\
1
\end{array}
\right]
\theta^3
\left[
\begin{array}{c}
\frac13 \\
1
\end{array}
\right]
}
{
\theta^2
\left[
\begin{array}{c}
\frac23 \\
1
\end{array}
\right]
\theta
\left[
\begin{array}{c}
0 \\
1
\end{array}
\right]
}.
\end{equation*}
}
\end{theorem}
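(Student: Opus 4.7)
The plan is to follow the same template as Theorem \ref{thm:analogue-Jacobi-(1,1/2), (0,1/2)} and the preceding two theorems of this section. First I would introduce the elliptic functions
\begin{equation*}
\varphi(z)=\frac{\theta^3\left[\begin{array}{c}1\\1\end{array}\right](z,\tau)}{\theta^2\left[\begin{array}{c}\frac{2}{3}\\0\end{array}\right](z,\tau)\,\theta\left[\begin{array}{c}-\frac{1}{3}\\1\end{array}\right](z,\tau)}
\quad\text{and}\quad
\psi(z)=\frac{\theta^3\left[\begin{array}{c}1\\1\end{array}\right](z,\tau)}{\theta^2\left[\begin{array}{c}\frac{2}{3}\\1\end{array}\right](z,\tau)\,\theta\left[\begin{array}{c}-\frac{1}{3}\\1\end{array}\right](z,\tau)}.
\end{equation*}
The third characteristic $(-\frac{1}{3},1)$ is dictated by ellipticity: applying (\ref{eqn:integer-char}) to the shifts $z\mapsto z+1$ and $z\mapsto z+\tau$ forces the denominator characteristics to satisfy $2\epsilon+\delta\equiv 1\pmod 2$ and $2\epsilon'+\delta'\equiv 1\pmod 2$, and this single pair $(\delta,\delta')=(-\frac{1}{3},1)$ works for both $(\epsilon,\epsilon')=(\frac{2}{3},0)$ and $(\frac{2}{3},1)$.

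Next I would locate the poles of $\varphi$ and $\psi$ in the fundamental parallelogram. Each function has a double pole at the zero of its squared theta (namely $z=\frac{1}{2}+\frac{\tau}{6}$ for $\varphi$ and $z=\frac{\tau}{6}$ for $\psi$) and a simple pole at $z=\frac{2\tau}{3}$, the zero of the linear factor. At the simple pole I would compute the residue directly and then use (\ref{eqn:integer-char}) and (\ref{eqn:real-char}) to translate each theta value back to $\zeta=0$. The three translations produce, respectively, a cube of the theta constant at characteristic $(\frac{1}{3},1)$, a square of the theta constant at $(0,0)$ (for $\varphi$) or $(0,1)$ (for $\psi$), and a multiple of $\theta'[{}^{1}_{1}]$, all decorated by a single exponential factor in $\tau$. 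At the double pole the residue is computed by the standard Laurent expansion; the first-order coefficient of $\theta[{}^{2/3}_{\epsilon'}]$ at its zero introduces $\theta'[{}^{2/3}_{\epsilon'}]$, precisely the derivative to be solved for, multiplied by a combination of the above theta constants.

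Finally I would invoke the residue theorem (Theorem \ref{thm-fundamental-elliptic-function}), which sets the two residues to sum to zero, and solve the resulting linear equation for the desired derivative. This yields the two identities in the statement.

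The main obstacle is the exponential-factor bookkeeping. The $\tau$-dependent phases collected when shifting $z=\frac{2\tau}{3}$ back to $z=0$ via (\ref{eqn:real-char}), the phases generated by reducing shifted characteristics modulo $2$ via (\ref{eqn:integer-char}), and the contribution of the second Taylor coefficient of the squared theta at its zero (needed for the double-pole residue) must cancel exactly to produce the clean rational constants $\mp\frac{1}{3}$ stated in the theorem; this is where a sign or an exponential prefactor is easiest to misplace.
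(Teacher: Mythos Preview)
Your proposal is correct and matches the paper's proof exactly: the paper introduces precisely the same two elliptic functions $\varphi$ and $\psi$ (with the auxiliary characteristic $(-\tfrac13,1)$) and then says ``the theorem can be proved in the same way as Theorem \ref{thm:analogue-Jacobi-(1,1/2), (0,1/2)}.'' Your identification of the pole locations and the residue-theorem argument is the intended fill-in of that one-line reference.
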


\begin{proof}
Consider the following elliptic functions:
\begin{equation*}
\varphi(z)
=
\frac
{
\theta^3
\left[
\begin{array}{c}
1 \\
1
\end{array}
\right](z, \tau)
}
{
\theta^2
\left[
\begin{array}{c}
\frac23 \\
0
\end{array}
\right](z, \tau)
\theta
\left[
\begin{array}{c}
-\frac13 \\
1
\end{array}
\right](z, \tau)
}
\,\,
\mathrm{and}
\,\,
\psi(z)=
\frac
{
\theta^3
\left[
\begin{array}{c}
1 \\
1
\end{array}
\right](z, \tau)
}
{
\theta^2
\left[
\begin{array}{c}
\frac23 \\
1
\end{array}
\right](z, \tau)
\theta
\left[
\begin{array}{c}
-\frac13 \\
1
\end{array}
\right](z, \tau)
}.
\end{equation*}
The theorem can be proved in the same way as Theorem \ref{thm:analogue-Jacobi-(1,1/2), (0,1/2)}.
\end{proof}

By Jacobi's triple product identity (\ref{eqn:Jacobi-triple}),
we obtain the following corollary:

\begin{corollary}
\label{coro:pro-series-(2/3,1)}
{\it
For every $\tau\in\mathbb{H}^2,$ we have
\begin{equation}
\frac{\eta^5(2\tau)}{\eta^2(\tau)}
=
\sum_{n=1}^{\infty} (-1)^{n-1}n\left(\frac{n}{3}\right) \exp \left(\frac{2\pi i n^2\tau}{3} \right)
\end{equation}
and
\begin{equation}
\frac{\eta^6(2\tau) \eta(3\tau)}{ \eta^3(\tau)\eta^2(6\tau)}
=
1+3
\sum_{n=1}^{\infty} (d_{1,6}(n)-d_{5,6}(n)) q^n, \quad q=\exp(2\pi i \tau).
\end{equation}
}
\end{corollary}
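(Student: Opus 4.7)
The plan is to derive both identities from the derivative formula for $\theta^{\prime}\left[\begin{array}{c}\frac{2}{3}\\1\end{array}\right]$ in Theorem \ref{thm:analogue-Jacobi-(2/3,0), (2/3,1)}, in parallel with the arguments used in Corollaries \ref{coro:pro-series-(0,1/3)} and \ref{coro:pro-series-(0,2/3)}.

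For the first identity, I would evaluate $\theta^{\prime}\left[\begin{array}{c}\frac{2}{3}\\1\end{array}\right]$ in two ways. Termwise differentiation of the Fourier series at $\zeta=0$ yields
\begin{equation*}
\theta^{\prime}\left[\begin{array}{c}\frac{2}{3}\\1\end{array}\right]
= 2\pi i\,e^{\pi i/3}\sum_{n\in\mathbb{Z}}(-1)^n\!\left(n+\tfrac{1}{3}\right)\exp\!\left(\pi i\left(n+\tfrac{1}{3}\right)^2\!\tau\right);
\end{equation*}
pairing the $n$ and $-n-1$ terms, reindexing by the positive integers $m$ coprime to $3$, and applying the rescaling $\tau\mapsto 6\tau$ (so that $\pi i m^{2}\tau/9$ becomes $2\pi i m^{2}\tau/3$) puts this in the form $\sum_{n\geq 1}(-1)^{n-1}n\left(\frac{n}{3}\right)\exp(2\pi i n^{2}\tau/3)$. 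On the other hand, Theorem \ref{thm:analogue-Jacobi-(2/3,0), (2/3,1)} together with Jacobi's triple product identity (\ref{eqn:Jacobi-triple}) and the relation $\theta^{\prime}[1,1]=-2\pi\eta^{3}(\tau)$ expresses the same quantity, after the same rescaling, as the eta quotient $\eta^{5}(2\tau)/\eta^{2}(\tau)$. (This identity has also appeared in Corollary \ref{coro:pro-series-(0,1/3)}.)

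For the second identity, I would compute the logarithmic derivative $\theta^{\prime}[2/3,1]/\theta[2/3,1]$ at $\zeta=0$ in two ways. Logarithmic differentiation of the Jacobi triple product expansion of $\theta\left[\begin{array}{c}\frac{2}{3}\\1\end{array}\right](\zeta,\tau)$ in $\zeta$, evaluated at $\zeta=0$, gives the Lambert series
\begin{equation*}
\frac{\theta^{\prime}\!\left[\begin{array}{c}\frac{2}{3}\\1\end{array}\right]}{\theta\!\left[\begin{array}{c}\frac{2}{3}\\1\end{array}\right]}
=\frac{2\pi i}{3}+2\pi i\sum_{n\geq 1}\!\left[\frac{q^{n-5/6}}{1-q^{n-5/6}}-\frac{q^{n-1/6}}{1-q^{n-1/6}}\right],
\end{equation*}
which, after expansion and collection of coefficients by residue class modulo $6$ followed by the rescaling $\tau\mapsto 6\tau$, becomes $\frac{2\pi i}{3}\bigl(1+3\sum_{n\geq 1}(d_{1,6}(n)-d_{5,6}(n))q^{n}\bigr)$. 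On the other side, Theorem \ref{thm:analogue-Jacobi-(2/3,0), (2/3,1)} gives the ratio as $\frac{1}{3}\theta^{\prime}[1,1]\theta^{3}[1/3,1]/(\theta^{3}[2/3,1]\theta[0,1])$, and Jacobi's triple product identity transforms this (under the same rescaling) into $\frac{2\pi i}{3}$ times $\eta^{6}(2\tau)\eta(3\tau)/(\eta^{3}(\tau)\eta^{2}(6\tau))$. Equating the two expressions yields the second identity.

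The principal obstacle will be the systematic translation of products of the form $\prod_{n\geq 1}(1-q^{n-r/6})$ (for $r\in\{1,2,3,4,5\}$) arising from Jacobi's triple product into ratios of $\eta(\tau),\eta(2\tau),\eta(3\tau),\eta(6\tau)$. This requires partitioning $\prod(1-q^{n/6})$ by residue class modulo $6$ and reassembling the pieces via the standard relations among $\eta(\tau/k)$ for $k\in\{2,3,6\}$, followed by the rescaling $\tau\mapsto 6\tau$ to convert the intermediate $\eta(\tau/k)$ into the desired $\eta(k\tau)$; the bookkeeping of phase factors $e^{\pi i/3},e^{\pi i/6}$ and fractional powers of $q$ is routine but lengthy.
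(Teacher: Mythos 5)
Your proposal is correct and follows essentially the same route as the paper: the paper's own (one-line) proof says the two identities follow from $\theta^{\prime}\left[\begin{smallmatrix}2/3\\1\end{smallmatrix}\right]$ and from $\theta^{\prime}\left[\begin{smallmatrix}2/3\\1\end{smallmatrix}\right]\big/\theta\left[\begin{smallmatrix}2/3\\1\end{smallmatrix}\right]$ respectively, i.e.\ exactly your plan of equating the series expansion (termwise differentiation for the first identity, the Lambert series from the triple product for the second) with the eta-quotient obtained from Theorem \ref{thm:analogue-Jacobi-(2/3,0), (2/3,1)} and Jacobi's triple product identity, after the rescaling $\tau\mapsto 6\tau$. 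The computational details you flag as the main obstacle check out (in particular the constant $2\pi i/3$ and the quotient $\eta^{6}(2\tau)\eta(3\tau)/(\eta^{3}(\tau)\eta^{2}(6\tau))$ do emerge), so your write-up supplies precisely what the paper leaves implicit.
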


\begin{proof}
The corollary follows from 
$
\theta^{\prime}
\left[
\begin{array}{c}
\frac23 \\
1
\end{array}
\right]
$
and
$
\theta^{\prime}
\left[
\begin{array}{c}
\frac23 \\
1
\end{array}
\right]/
\theta
\left[
\begin{array}{c}
\frac23 \\
1
\end{array}
\right].
$
\end{proof}

\subsection{Derivative formulas for $(\epsilon, \epsilon^{\prime})=(2/3,1/3), (2/3,5/3)$}

\begin{theorem}
\label{thm:analogue-Jacobi-(2/3,1/3), (2/3,5/3)}
{\it
For every $\tau\in\mathbb{H}^2,$ we have
\begin{equation*}
\theta^{\prime}
\left[
\begin{array}{c}
\frac23 \\
\frac13
\end{array}
\right]
=
-
\frac13
\frac
{
\theta^{\prime}
\left[
\begin{array}{c}
1 \\
1
\end{array}
\right]
\theta^3
\left[
\begin{array}{c}
\frac13 \\
\frac53
\end{array}
\right]
}
{
\theta^2
\left[
\begin{array}{c}
\frac23 \\
\frac13
\end{array}
\right]
\theta
\left[
\begin{array}{c}
0 \\
1
\end{array}
\right]
}
\,\,
\text{and}
\,\,
\theta^{\prime}
\left[
\begin{array}{c}
\frac23 \\
\frac53
\end{array}
\right]
=
\frac13
\frac
{
\theta^{\prime}
\left[
\begin{array}{c}
1 \\
1
\end{array}
\right]
\theta^3
\left[
\begin{array}{c}
\frac13 \\
\frac13
\end{array}
\right]
}
{
\theta^2
\left[
\begin{array}{c}
\frac23 \\
\frac53
\end{array}
\right]
\theta
\left[
\begin{array}{c}
0 \\
1
\end{array}
\right]
}.
\end{equation*}
}
\end{theorem}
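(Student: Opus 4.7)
The plan is to imitate exactly the strategy used for the preceding derivative formulas in this section: introduce two auxiliary elliptic functions $\varphi(z)$ and $\psi(z)$, one tailored to each characteristic, and read off the two claimed identities from the residue theorem (Theorem \ref{thm-fundamental-elliptic-function}).

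Concretely, I would try
$$
\varphi(z) = \frac{\theta^{3}\left[\begin{array}{c}1\\1\end{array}\right](z,\tau)}{\theta^{2}\left[\begin{array}{c}\frac{2}{3}\\ \frac{1}{3}\end{array}\right](z,\tau)\,\theta\left[\begin{array}{c}-\frac{1}{3}\\ \frac{1}{3}\end{array}\right](z,\tau)}, \qquad \psi(z) = \frac{\theta^{3}\left[\begin{array}{c}1\\1\end{array}\right](z,\tau)}{\theta^{2}\left[\begin{array}{c}\frac{2}{3}\\ \frac{5}{3}\end{array}\right](z,\tau)\,\theta\left[\begin{array}{c}-\frac{1}{3}\\ -\frac{1}{3}\end{array}\right](z,\tau)}.
$$
The auxiliary characteristics are forced by the ellipticity requirement: applying the quasi-periodicity relation (\ref{eqn:integer-char}) shows that the combined monodromy around the two generators of the period lattice vanishes precisely when $2\epsilon_{1}+\epsilon_{2}$ and $2\epsilon_{1}^{\prime}+\epsilon_{2}^{\prime}$ are both odd integers, and $(-\tfrac{1}{3},\tfrac{1}{3})$ (respectively $(-\tfrac{1}{3},-\tfrac{1}{3})$) is the unique characteristic modulo $2\mathbb{Z}^{2}$ satisfying this against $(\tfrac{2}{3},\tfrac{1}{3})$ (resp.\ $(\tfrac{2}{3},\tfrac{5}{3})$).

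Next I would locate the poles in the fundamental parallelogram using the formula for the unique zero of $\theta[\epsilon,\epsilon^{\prime}]$ recalled at the end of Section \ref{sec:properties}. For $\varphi$ there is a double pole at $z_{0}=\tau/6+1/3$ and a simple pole at $z_{1}=2\tau/3+1/3$. A second-order Taylor expansion of $\theta[2/3,1/3]$ at $z_{0}$ yields a residue proportional to $\theta^{\prime}[2/3,1/3]$ divided by $(\theta^{\prime}[1,1])^{2}$, while the residue at the simple pole is a product of theta values at $z_{1}$. Setting the sum equal to zero (Theorem \ref{thm-fundamental-elliptic-function}) gives a linear equation in which $\theta^{\prime}[2/3,1/3]$ appears alone and can be isolated. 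The argument for $\psi$ is identical and yields the corresponding formula for $\theta^{\prime}[2/3,5/3]$.

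The final step is cosmetic but is where arithmetic errors are easiest to make: using the reflection identity together with the integer-shift rule that appears just after (\ref{eqn:integer-char}), I would rewrite $\theta[-1/3,1/3]$ as a unit multiple of $\theta[1/3,5/3]$ and $\theta[-1/3,-1/3]$ as a unit multiple of $\theta[1/3,1/3]$, and then evaluate $\theta^{3}[1,1]$ at $z_{1}$ using the real-characteristic translation formula (\ref{eqn:real-char}). This latter evaluation is exactly what produces the $\theta[0,1]$ factor appearing in the denominators of both identities. The main obstacle is the accurate bookkeeping of the sixth roots of unity that accumulate from all these shifts, since one must verify that the phases collapse to exactly $-1/3$ for $\varphi$ and $+1/3$ for $\psi$, in agreement with the signs in the two stated identities.
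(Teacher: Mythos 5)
Your proposal coincides with the paper's own proof: the author considers exactly the same two elliptic functions $\varphi$ and $\psi$, built from $\theta^3$ with characteristic $(1,1)$ over the square of the target characteristic times the auxiliary characteristics $(-\tfrac13,\tfrac13)$ and $(-\tfrac13,-\tfrac13)$ respectively, and applies the residue theorem as in the model computation of Theorem \ref{thm:analogue-Jacobi-(1,1/2), (0,1/2)}. One small bookkeeping slip in your sketch: the factor $\theta\left[\begin{array}{c}0\\1\end{array}\right]$ in the final denominators comes from evaluating $\theta\left[\begin{array}{c}\frac23\\ \frac13\end{array}\right]$ and the auxiliary factor at the shifted pole locations, whereas evaluating $\theta^3\left[\begin{array}{c}1\\1\end{array}\right]$ at the simple pole is what produces the cube $\theta^3\left[\begin{array}{c}\frac13\\ \frac53\end{array}\right]$ in the numerator; this does not affect the validity of the argument.
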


\begin{proof}
Consider the following elliptic functions:
\begin{equation*}
\varphi(z)
=
\frac
{
\theta^3
\left[
\begin{array}{c}
1 \\
1
\end{array}
\right](z, \tau)
}
{
\theta^2
\left[
\begin{array}{c}
\frac23 \\
\frac13
\end{array}
\right](z, \tau)
\theta
\left[
\begin{array}{c}
-\frac13 \\
\frac13
\end{array}
\right](z, \tau)
}
\,\,
\mathrm{and}
\,\,
\psi(z)=
\frac
{
\theta^3
\left[
\begin{array}{c}
1 \\
1
\end{array}
\right](z, \tau)
}
{
\theta^2
\left[
\begin{array}{c}
\frac23 \\
\frac53
\end{array}
\right](z, \tau)
\theta
\left[
\begin{array}{c}
-\frac13 \\
-\frac13
\end{array}
\right](z, \tau)
}.
\end{equation*}
The theorem can be proved in the same way as Theorem \ref{thm:analogue-Jacobi-(1,1/2), (0,1/2)}.
\end{proof}

\subsection{Derivative formulas for $(\epsilon, \epsilon^{\prime})=(2/3,2/3), (2/3,4/3)$}

\begin{theorem}
\label{thm:analogue-Jacobi-(2/3,2/3), (2/3,4/3)}
{\it
For every $\tau\in\mathbb{H}^2,$ we have
\begin{equation*}
\theta^{\prime}
\left[
\begin{array}{c}
\frac23 \\
\frac23
\end{array}
\right]
=
\frac13
\frac
{
\theta^{\prime}
\left[
\begin{array}{c}
1 \\
1
\end{array}
\right]
\theta^3
\left[
\begin{array}{c}
\frac13 \\
\frac13
\end{array}
\right]
}
{
\theta^2
\left[
\begin{array}{c}
\frac23 \\
\frac23
\end{array}
\right]
\theta
\left[
\begin{array}{c}
0 \\
0
\end{array}
\right]
}
\,\,
\text{and}
\,\,
\theta^{\prime}
\left[
\begin{array}{c}
\frac23 \\
\frac43
\end{array}
\right]
=
\frac13
\frac
{
\theta^{\prime}
\left[
\begin{array}{c}
1 \\
1
\end{array}
\right]
\theta^3
\left[
\begin{array}{c}
\frac13 \\
\frac53
\end{array}
\right]
}
{
\theta^2
\left[
\begin{array}{c}
\frac23 \\
\frac43
\end{array}
\right]
\theta
\left[
\begin{array}{c}
0 \\
0
\end{array}
\right]
}.
\end{equation*}
}
\end{theorem}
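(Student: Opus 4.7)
My plan is to mimic the strategy used repeatedly in Sections \ref{sec:derivative-level4}--\ref{sec:derivative-level6}: construct two elliptic functions whose denominators contain $\theta^2$-factors that vanish at the characteristics of interest, and extract the desired derivative identities from the residue theorem. Concretely, I would take
\begin{equation*}
\varphi(z)
=
\frac
{
\theta^3
\left[
\begin{array}{c}
1 \\
1
\end{array}
\right](z,\tau)
}
{
\theta^2
\left[
\begin{array}{c}
\frac23 \\
\frac23
\end{array}
\right](z,\tau)
\,
\theta
\left[
\begin{array}{c}
-\frac13 \\
-\frac13
\end{array}
\right](z,\tau)
}
\,\,
\mathrm{and}
\,\,
\psi(z)
=
\frac
{
\theta^3
\left[
\begin{array}{c}
1 \\
1
\end{array}
\right](z,\tau)
}
{
\theta^2
\left[
\begin{array}{c}
\frac23 \\
\frac43
\end{array}
\right](z,\tau)
\,
\theta
\left[
\begin{array}{c}
-\frac13 \\
\frac13
\end{array}
\right](z,\tau)
}.
\end{equation*}
The characteristics in the denominators are chosen so that the quasi-periodicity factor in \eqref{eqn:integer-char} under $z\mapsto z+n+m\tau$ cancels exactly between numerator and denominator (the parity conditions $2\epsilon+c\equiv 1$ and $2\epsilon'+d\equiv 1\pmod{2}$ that make a ratio of this form doubly periodic), so $\varphi$ and $\psi$ are genuinely elliptic.

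Next I would locate the poles of $\varphi(z)$ in the fundamental parallelogram, using the fact recalled in Section \ref{sec:properties} that $\theta[\epsilon,\epsilon'](\zeta,\tau)$ has a unique simple zero at $\zeta=(1-\epsilon)\tau/2+(1-\epsilon')/2$. For $\varphi$ this gives a double pole at $z=(\tau+1)/6$ coming from $\theta^2[2/3,2/3]$ and a simple pole at $z=(2\tau+2)/3$ coming from $\theta[-1/3,-1/3]$; for $\psi$ the analogous poles are at $z=(\tau-1)/6$ and $z=(2\tau+1)/3$. At each pole I would compute the Laurent expansion of the relevant theta factors up to the required order, exactly as in the proof of Theorem \ref{thm:analogue-Jacobi-(1,1/2), (0,1/2)}: the double pole produces a residue involving $\theta'[2/3,2/3]/\theta[2/3,2/3]$, while the simple pole contributes a term in $\theta[-1/3,-1/3]/\theta'[1,1]$, with overall phase factors coming from \eqref{eqn:real-char}.

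Applying Theorem \ref{thm-fundamental-elliptic-function} (sum of residues equals zero) for each of $\varphi$ and $\psi$ then yields two equations. Each is already a single equation in one unknown derivative---in the $\varphi$ case it involves $\theta'[2/3,2/3]$ together with theta constants $\theta[1/3,1/3]$, $\theta[2/3,2/3]$, $\theta[0,0]$ and $\theta'[1,1]$; in the $\psi$ case it involves $\theta'[2/3,4/3]$. Solving each equation for the corresponding derivative and then using the identities $\theta[-1/3,-1/3](0,\tau)=\theta[1/3,1/3](0,\tau)$ and $\theta[-1/3,1/3](0,\tau)=\theta[1/3,-1/3](0,\tau)$ together with the second transformation in \eqref{eqn:integer-char} (to rewrite $\theta[1/3,-1/3]$ as a constant multiple of $\theta[1/3,5/3]$) gives precisely the expressions in the theorem.

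The routine part is the residue extraction, which is parallel to what has been carried out explicitly in Theorems \ref{thm:analogue-Jacobi-(1,1/2), (0,1/2)} and \ref{thm:analogue-Jacobi-(1/5,1/5), (3/5,3/5)}. The only delicate step---and the one I expect to be the main bookkeeping obstacle---is tracking the roots-of-unity phases arising when the characteristics $(-1/3,-1/3)$ and $(-1/3,1/3)$ are rewritten in the canonical forms $(1/3,1/3)$ and $(1/3,5/3)$ that appear in the statement; a sign or cube root of unity incurred in \eqref{eqn:real-char} must propagate correctly through the residue computation to yield the precise coefficient $+1/3$ in both identities.
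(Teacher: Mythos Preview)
Your proposal is correct and essentially identical to the paper's own proof: the paper also applies the residue theorem to the pair
\[
\varphi(z)=\frac{\theta^3\!\left[\begin{smallmatrix}1\\1\end{smallmatrix}\right](z,\tau)}{\theta^2\!\left[\begin{smallmatrix}2/3\\2/3\end{smallmatrix}\right](z,\tau)\,\theta\!\left[\begin{smallmatrix}-1/3\\5/3\end{smallmatrix}\right](z,\tau)},\qquad
\psi(z)=\frac{\theta^3\!\left[\begin{smallmatrix}1\\1\end{smallmatrix}\right](z,\tau)}{\theta^2\!\left[\begin{smallmatrix}2/3\\4/3\end{smallmatrix}\right](z,\tau)\,\theta\!\left[\begin{smallmatrix}-1/3\\1/3\end{smallmatrix}\right](z,\tau)},
\]
and refers back to Theorem~\ref{thm:analogue-Jacobi-(1,1/2), (0,1/2)} for the computation. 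Your $\psi$ coincides with the paper's, and your $\varphi$ differs from the paper's only by the constant factor $e^{\pi i/3}$ (since $\theta[-1/3,5/3]=e^{-\pi i/3}\theta[-1/3,-1/3]$), which is immaterial for the residue argument.
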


\begin{proof}
Consider the following elliptic functions:
\begin{equation*}
\varphi(z)
=
\frac
{
\theta^3
\left[
\begin{array}{c}
1 \\
1
\end{array}
\right](z, \tau)
}
{
\theta^2
\left[
\begin{array}{c}
\frac23 \\
\frac23
\end{array}
\right](z, \tau)
\theta
\left[
\begin{array}{c}
-\frac13 \\
\frac53
\end{array}
\right](z, \tau)
}\,\,
\mathrm{and}
\,\,
\psi(z)=
\frac
{
\theta^3
\left[
\begin{array}{c}
1 \\
1
\end{array}
\right](z, \tau)
}
{
\theta^2
\left[
\begin{array}{c}
\frac23 \\
\frac43
\end{array}
\right](z, \tau)
\theta
\left[
\begin{array}{c}
-\frac13 \\
\frac13
\end{array}
\right](z, \tau)
}.
\end{equation*}
The theorem can be proved in the same way as Theorem \ref{thm:analogue-Jacobi-(1,1/2), (0,1/2)}.
\end{proof}

\subsection{Derivative formulas for $(\epsilon, \epsilon^{\prime})=(1,2/3)$}

\begin{theorem}
\label{thm:analogue-Jacobi-(1,2/3)}
{\it
For every $\tau\in\mathbb{H}^2,$ we have
\begin{equation*}
\theta^{\prime}
\left[
\begin{array}{c}
1 \\
\frac23
\end{array}
\right]
=
\frac13
\frac
{
\theta^{\prime}
\left[
\begin{array}{c}
1 \\
1
\end{array}
\right]
\theta^3
\left[
\begin{array}{c}
1 \\
\frac13
\end{array}
\right]
}
{
\theta^2
\left[
\begin{array}{c}
1 \\
\frac23
\end{array}
\right]
\theta
\left[
\begin{array}{c}
1 \\
0
\end{array}
\right]
}.
\end{equation*}
}
\end{theorem}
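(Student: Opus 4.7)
The proof will follow the template of Theorem 3.1 exactly, so the task reduces to finding the right elliptic function. My plan is to consider
$$
\varphi(z) = \frac{\theta^3\left[\begin{array}{c}1\\1\end{array}\right](z,\tau)}{\theta^2\left[\begin{array}{c}1\\\frac{2}{3}\end{array}\right](z,\tau)\,\theta\left[\begin{array}{c}1\\-\frac{1}{3}\end{array}\right](z,\tau)},
$$
which is designed so that the second characteristics sum properly. First I would verify via (\ref{eqn:integer-char}) that $\varphi$ is doubly periodic: under $z\mapsto z+1$ the three characteristic-$\epsilon$ entries all equal $1$, giving multiplier $(-1)^3/(-1)^2(-1)=1$, and under $z\mapsto z+\tau$ the exponential prefactors involving $z$ and $\tau$ cancel because $3\cdot 1 = 2\cdot\tfrac{2}{3} + (-\tfrac{1}{3})$, leaving the constant factor $\exp(-3\pi i)/\exp(-\pi i)=1$.

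Next I would locate the poles in the fundamental parallelogram, using the paper's formula $z=\frac{1-\epsilon}{2}\tau+\frac{1-\epsilon'}{2}$ for the unique zero of each theta. The zero of $\theta[\tfrac{1}{1}{\scriptstyle 2/3}]$ is at $z=\tfrac{1}{6}$ (contributing a double pole) and the zero of $\theta[\tfrac{1}{1}{\scriptstyle -1/3}]$ is at $z=\tfrac{2}{3}$ (a simple pole); the triple zero of the numerator lies at $z=0$.

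Then I would compute both residues via the translation identity (\ref{eqn:real-char}) with $m=0$. At $z=\tfrac{2}{3}$ (shift by $n=\tfrac{4}{3}$), one obtains $\theta[\tfrac{1}{1}](\tfrac{2}{3})=\theta[\tfrac{1}{\scriptstyle 7/3}]=-\theta[\tfrac{1}{\scriptstyle 1/3}]$, $\theta[\tfrac{1}{\scriptstyle 2/3}](\tfrac{2}{3})=-\theta[\tfrac{1}{0}]$, and $\theta[\tfrac{1}{\scriptstyle -1/3}](\tfrac{2}{3}+u)=\theta[\tfrac{1}{1}](u)=\theta'[\tfrac{1}{1}]u+O(u^3)$, yielding
$$
\textrm{Res}\!\left(\varphi,\tfrac{2}{3}\right)=-\frac{\theta^3\left[\begin{array}{c}1\\\frac{1}{3}\end{array}\right]}{\theta^2\left[\begin{array}{c}1\\0\end{array}\right]\theta^{\prime}\left[\begin{array}{c}1\\1\end{array}\right]}.
$$
At $z=\tfrac{1}{6}$ (shift by $n=\tfrac{1}{3}$), the key identity is $\theta[\tfrac{1}{1}](\tfrac{1}{6}+u)=\theta[\tfrac{1}{\scriptstyle 4/3}](u)$, where $\theta[\tfrac{1}{\scriptstyle 4/3}]=-\theta[\tfrac{1}{\scriptstyle 2/3}]$ and $\theta^{\prime}[\tfrac{1}{\scriptstyle 4/3}]=\theta^{\prime}[\tfrac{1}{\scriptstyle 2/3}]$ (from (\ref{eqn:integer-char})--(\ref{eqn:real-char}) combined with the reflection formula). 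Extracting the $u^{-1}$ coefficient from the Laurent expansion of the double pole gives
$$
\textrm{Res}\!\left(\varphi,\tfrac{1}{6}\right)=\frac{3\,\theta^2\left[\begin{array}{c}1\\\frac{2}{3}\end{array}\right]\theta^{\prime}\left[\begin{array}{c}1\\\frac{2}{3}\end{array}\right]}{\theta\left[\begin{array}{c}1\\0\end{array}\right]\left(\theta^{\prime}\left[\begin{array}{c}1\\1\end{array}\right]\right)^2}.
$$

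Finally, Theorem \ref{thm-fundamental-elliptic-function} forces the sum of residues to vanish, and solving for $\theta^{\prime}[\tfrac{1}{\scriptstyle 2/3}]$ gives the claimed formula. The main obstacle is the computation at the double pole: one must carry the expansion to first order in both the cubed numerator and the linear denominator factor, and then use the sign identity $\theta[\tfrac{1}{\scriptstyle 4/3}]=-\theta[\tfrac{1}{\scriptstyle 2/3}]$ to convert the natural expression into the form stated in the theorem. Once this bookkeeping is handled, the argument is identical in structure to the proof of Theorem \ref{thm:analogue-Jacobi-(1,1/2), (0,1/2)}.
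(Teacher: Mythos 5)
Your proposal is correct and follows exactly the paper's own route: the paper's proof of this theorem consists precisely of introducing the elliptic function $\varphi(z)=\theta^3\left[\begin{smallmatrix}1\\1\end{smallmatrix}\right](z,\tau)\big/\bigl(\theta^2\left[\begin{smallmatrix}1\\\frac23\end{smallmatrix}\right](z,\tau)\,\theta\left[\begin{smallmatrix}1\\-\frac13\end{smallmatrix}\right](z,\tau)\bigr)$ and invoking the residue computation of Theorem \ref{thm:analogue-Jacobi-(1,1/2), (0,1/2)}, and your pole locations, both residues, and the final algebra all check out. (One cosmetic slip: your periodicity justification cites the false identity $3\cdot 1=2\cdot\frac23-\frac13$; what actually matters is that the $z$- and $\tau$-dependent prefactors cancel because numerator and denominator each contain three theta factors, after which the residual constant $\exp(-3\pi i)/\exp(-\pi i)=1$ as you state.)
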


\begin{proof}
Consider the following elliptic function:
\begin{equation*}
\varphi(z)
=
\frac
{
\theta^3
\left[
\begin{array}{c}
1 \\
1
\end{array}
\right](z, \tau)
}
{
\theta^2
\left[
\begin{array}{c}
1 \\
\frac23
\end{array}
\right](z, \tau)
\theta
\left[
\begin{array}{c}
1 \\
-\frac13
\end{array}
\right](z, \tau)
}.
\end{equation*}
The theorem can be proved in the same way as Theorem \ref{thm:analogue-Jacobi-(1,1/2), (0,1/2)}.
\end{proof}

\section{Derivative formulas of level 3}
\label{sec:derivative-level3}

From the discussion of Farkas and Kra \cite[pp. 89-97]{Farkas-Kra},
we have only to consider the case where
\begin{equation*}
(\epsilon, \epsilon^{\prime})=(1/3,1/3), (1/3,1), (1/3,5/3), (1,1/3).
\end{equation*}

\subsection{Derivative formula for $(\epsilon, \epsilon^{\prime})=(1/3,1/3)$}

\begin{theorem}
\label{thm:analogue-Jacobi-(1/3,1/3)}
{\it
For every $\tau\in\mathbb{H}^2,$ we have
\begin{align*}
\frac
{
\theta^{\prime}
\left[
\begin{array}{c}
\frac13 \\
\frac13
\end{array}
\right]
}
{
\theta
\left[
\begin{array}{c}
\frac13 \\
\frac13
\end{array}
\right]
}
=&
\frac13
\frac
{
\theta^{\prime}
\left[
\begin{array}{c}
1 \\
1
\end{array}
\right]
\theta^3
\left[
\begin{array}{c}
\frac13 \\
\frac13
\end{array}
\right]
}
{
\theta
\left[
\begin{array}{c}
0 \\
1
\end{array}
\right]
\theta^3
\left[
\begin{array}{c}
\frac23 \\
\frac53
\end{array}
\right]
}
-
\zeta_3
\frac
{
\theta^{\prime}
\left[
\begin{array}{c}
1 \\
1
\end{array}
\right]
\theta
\left[
\begin{array}{c}
0 \\
1
\end{array}
\right]
\theta
\left[
\begin{array}{c}
\frac13 \\
\frac43
\end{array}
\right]
\theta
\left[
\begin{array}{c}
\frac23 \\
\frac23
\end{array}
\right]
}
{
\theta
\left[
\begin{array}{c}
0 \\
0
\end{array}
\right]
\theta
\left[
\begin{array}{c}
1 \\
0
\end{array}
\right]
\theta
\left[
\begin{array}{c}
\frac13 \\
\frac13
\end{array}
\right]
\theta
\left[
\begin{array}{c}
\frac23 \\
\frac53
\end{array}
\right]
}  \\
=&
-
\frac{\pi}{3}
\theta
\left[
\begin{array}{c}
0 \\
0
\end{array}
\right]
\theta
\left[
\begin{array}{c}
1 \\
0
\end{array}
\right]
\frac
{
\theta^3
\left[
\begin{array}{c}
\frac13 \\
\frac13
\end{array}
\right]
}
{
\theta^3
\left[
\begin{array}{c}
\frac23 \\
\frac53
\end{array}
\right]
}
+
\pi
\zeta_3
\theta^2
\left[
\begin{array}{c}
0 \\
1
\end{array}
\right]
\frac
{
\theta
\left[
\begin{array}{c}
\frac13 \\
\frac43
\end{array}
\right]
\theta
\left[
\begin{array}{c}
\frac23 \\
\frac23
\end{array}
\right]
}
{
\theta
\left[
\begin{array}{c}
\frac13 \\
\frac13
\end{array}
\right]
\theta
\left[
\begin{array}{c}
\frac23 \\
\frac53
\end{array}
\right]
}.
\end{align*}
}
\end{theorem}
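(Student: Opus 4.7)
The proof is expected to follow the residue-theorem paradigm used throughout the earlier sections, though the two-term structure of the right-hand side requires either a more intricate single elliptic function or the combination of two separate residue identities.

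My plan is to start from the elliptic function
$$\varphi(z) = \frac{\theta^3\left[\begin{array}{c}1\\1\end{array}\right](z,\tau)}{\theta^2\left[\begin{array}{c}\frac13\\\frac13\end{array}\right](z,\tau)\,\theta\left[\begin{array}{c}\frac13\\\frac53\end{array}\right](z,\tau)},$$
whose ellipticity is immediate from (2.1). In the fundamental parallelogram $\varphi$ has a double pole at $z_1 = (\tau+1)/3$ (the zero of $\theta[\tfrac13,\tfrac13]$) and a simple pole at $z_2 = (\tau-1)/3$ (the zero of $\theta[\tfrac13,\tfrac53]$). Applying (2.4) with $(m,n)=(\tfrac23,\tfrac23)$ identifies $\theta[\tfrac13,\tfrac13](\zeta+z_1,\tau)$ with $\theta[1,1](\zeta,\tau)$ times a linear exponential in $\zeta$; since $\theta[1,1]$ is odd, differentiating once and twice at $\zeta=0$ forces the structural identity $\theta''[\tfrac13,\tfrac13](z_1,\tau)/\theta'[\tfrac13,\tfrac13](z_1,\tau) = -4\pi i/3$. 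Substituting this into the standard double-pole residue formula and translating the log-derivatives of $\theta[1,1](z)$ and $\theta[\tfrac13,\tfrac53](z)$ at $z_1$ back to $\zeta=0$ via (2.4), the $+4\pi i/3$ contribution cancels cleanly against a $-4\pi i/3$ piece coming from $3\,\theta'[1,1](z_1)/\theta[1,1](z_1)$, leaving the double-pole residue proportional to $-3\,\theta'[\tfrac13,\tfrac13]/\theta[\tfrac13,\tfrac13] - \theta'[1,\tfrac13]/\theta[1,\tfrac13]$ at $\zeta = 0$. The simple-pole residue at $z_2$ is routine and yields a rational expression in $\theta'[1,1]$ and theta constants at $\zeta = 0$.

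Setting $\mathrm{Res}(\varphi,z_1)+\mathrm{Res}(\varphi,z_2)=0$ then gives a linear relation among $\theta'[\tfrac13,\tfrac13]/\theta[\tfrac13,\tfrac13]$, $\theta'[1,\tfrac13]/\theta[1,\tfrac13]$, and the simple-pole expression. To isolate $\theta'[\tfrac13,\tfrac13]/\theta[\tfrac13,\tfrac13]$ I would eliminate the intermediate $\theta'[1,\tfrac13]/\theta[1,\tfrac13]$ by invoking the derivative formula for the characteristic $(1,\tfrac13)$, which is available either from the author's earlier work on the case $\epsilon=1$ or from the analogous elliptic-function argument carried out in a subsequent subsection of this section. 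Substituting this expression back and simplifying via (2.2) and Jacobi's triple product identity (2.3) should assemble both terms of the right-hand side into their stated canonical form. The second equality of the theorem is then immediate upon substituting Jacobi's derivative formula (1.1), $\theta'[1,1] = -\pi\,\theta[0,0]\theta[1,0]\theta[0,1]$, into both terms.

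The main obstacle will be the careful tracking of the many exponential phase factors produced by the repeated applications of (2.4) and (2.2): the overall coefficients $\tfrac13$ and $-\zeta_3 = -e^{2\pi i/3}$, as well as the precise canonical representatives $(\tfrac13,\tfrac43)$, $(\tfrac23,\tfrac23)$, and $(\tfrac23,\tfrac53)$ of the characteristics appearing in the displayed formula, all emerge only after every phase has been accounted for across several independent translations. A secondary difficulty is verifying the cancellation of the structural $4\pi i/3$ constant against the $3\cdot(-2\pi i/3)$ piece from $3\,\theta'[1,1](z_1)/\theta[1,1](z_1)$: this cancellation is precisely what allows the final answer to be expressed as a clean two-term sum rather than including an extraneous constant multiple of $\theta'[1,1](\cdots)/\theta[\tfrac13,\tfrac13](\cdots)$.
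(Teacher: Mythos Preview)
Your proposed function
\[
\varphi(z)=\frac{\theta^{3}\!\left[\begin{smallmatrix}1\\1\end{smallmatrix}\right](z,\tau)}{\theta^{2}\!\left[\begin{smallmatrix}1/3\\1/3\end{smallmatrix}\right](z,\tau)\,\theta\!\left[\begin{smallmatrix}1/3\\5/3\end{smallmatrix}\right](z,\tau)}
\]
is \emph{not} elliptic for the lattice $\mathbb{Z}+\mathbb{Z}\tau$. By (2.1) the multiplier under $z\mapsto z+\tau$ depends on the sum of the $\epsilon'$ entries; here the numerator contributes $3$ while the denominator contributes $2\cdot\tfrac13+\tfrac53=\tfrac73$, and the resulting factor is $\exp\bigl(2\pi i(-\tfrac32+\tfrac76)\bigr)=\zeta_3^{-1}\ne 1$. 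Thus $\varphi(z+\tau)=\zeta_3^{-1}\varphi(z)$, so the residue theorem on the parallelogram spanned by $1,\tau$ is not available. If instead you enlarge the lattice to $\mathbb{Z}+3\mathbb{Z}\tau$, the residues at the $\tau$-translates of each pole are $\zeta_3^{-1}$ and $\zeta_3^{-2}$ times the original, and the total sum factors as $(1+\zeta_3^{-1}+\zeta_3^{-2})\bigl(\mathrm{Res}(\varphi,z_1)+\mathrm{Res}(\varphi,z_2)\bigr)=0$, which is vacuous. So the residue identity you are counting on yields no information, and the argument collapses at the very first step.

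The paper avoids this obstruction by reversing the roles of numerator and denominator. It takes
\[
\varphi(z)=\frac{\theta\!\left[\begin{smallmatrix}1/3\\1/3\end{smallmatrix}\right](z)\,\theta\!\left[\begin{smallmatrix}2/3\\5/3\end{smallmatrix}\right](z)\,\theta\!\left[\begin{smallmatrix}0\\0\end{smallmatrix}\right](z)}{\theta^{2}\!\left[\begin{smallmatrix}1\\1\end{smallmatrix}\right](z)\,\theta\!\left[\begin{smallmatrix}1\\0\end{smallmatrix}\right](z)},
\]
which \emph{is} honestly elliptic for $\mathbb{Z}+\mathbb{Z}\tau$ (the $\epsilon'$-sums are $\tfrac13+\tfrac53+0=2$ and $2\cdot 1+0=2$). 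Its poles sit at the half-periods $0$ and $\tfrac12$, so no characteristic shifts are needed to compute them; the double pole at $0$ produces $\theta'\!\left[\begin{smallmatrix}1/3\\1/3\end{smallmatrix}\right]\big/\theta\!\left[\begin{smallmatrix}1/3\\1/3\end{smallmatrix}\right]+\theta'\!\left[\begin{smallmatrix}2/3\\5/3\end{smallmatrix}\right]\big/\theta\!\left[\begin{smallmatrix}2/3\\5/3\end{smallmatrix}\right]$ directly, and the auxiliary term is eliminated via the level-$6$ formula for $(2/3,5/3)$ already established in the previous section, rather than the level-$3$ formula for $(1,1/3)$ you propose. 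The second displayed equality then follows from Jacobi's derivative formula, as you say.
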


\begin{proof}
Consider the following elliptic function:
\begin{equation*}
\varphi(z)
=
\frac
{
\theta
\left[
\begin{array}{c}
\frac13 \\
\frac13
\end{array}
\right](z, \tau)
\theta
\left[
\begin{array}{c}
\frac23 \\
\frac53
\end{array}
\right](z, \tau)
\theta
\left[
\begin{array}{c}
0 \\
0
\end{array}
\right](z, \tau)
}
{
\theta^2
\left[
\begin{array}{c}
1 \\
1
\end{array}
\right](z, \tau)
\theta
\left[
\begin{array}{c}
1 \\
0
\end{array}
\right](z, \tau)
}.
\end{equation*}
\par
We first note that
in the fundamental parallelogram,
the poles of $\varphi(z)$ are $z=0$ and $z=1/2.$
Direct calculation yields
\begin{equation*}
\mathrm{Res}\left(\varphi(z), 0 \right)
=
\frac
{
\theta
\left[
\begin{array}{c}
\frac13 \\
\frac13
\end{array}
\right]
\theta
\left[
\begin{array}{c}
\frac23 \\
\frac53
\end{array}
\right]
\theta
\left[
\begin{array}{c}
0 \\
0
\end{array}
\right]
}
{
\theta^{\prime}
\left[
\begin{array}{c}
1 \\
1
\end{array}
\right]^2
\theta
\left[
\begin{array}{c}
1 \\
0
\end{array}
\right]
}
\left\{
\frac
{
\theta^{\prime}
\left[
\begin{array}{c}
\frac13 \\
\frac13
\end{array}
\right]
}
{
\theta
\left[
\begin{array}{c}
\frac13 \\
\frac13
\end{array}
\right]
}
+
\frac
{
\theta^{\prime}
\left[
\begin{array}{c}
\frac23 \\
\frac53
\end{array}
\right]
}
{
\theta
\left[
\begin{array}{c}
\frac23 \\
\frac53
\end{array}
\right]
}
\right\}
\end{equation*}
and
\begin{equation*}
\mathrm{Res}\left(\varphi(z), \frac12 \right)
=
\zeta_3
\frac
{
\theta
\left[
\begin{array}{c}
\frac13 \\
\frac43
\end{array}
\right]
\theta
\left[
\begin{array}{c}
\frac23 \\
\frac23
\end{array}
\right]
\theta
\left[
\begin{array}{c}
0 \\
1
\end{array}
\right]
}
{
\theta^2
\left[
\begin{array}{c}
1 \\
0
\end{array}
\right]
\theta^{\prime}
\left[
\begin{array}{c}
1 \\
1
\end{array}
\right]
}.
\end{equation*}
Since
$
\mathrm{Res}\left(\varphi(z),0 \right)+\mathrm{Res}\left(\varphi(z), \frac12 \right)=0,
$
it follows that
\begin{equation*}
\frac
{
\theta^{\prime}
\left[
\begin{array}{c}
\frac13 \\
\frac13
\end{array}
\right]
}
{
\theta
\left[
\begin{array}{c}
\frac13 \\
\frac13
\end{array}
\right]
}
+
\frac
{
\theta^{\prime}
\left[
\begin{array}{c}
\frac23 \\
\frac53
\end{array}
\right]
}
{
\theta
\left[
\begin{array}{c}
\frac23 \\
\frac53
\end{array}
\right]
}
=
-
\zeta_3
\frac
{
\theta^{\prime}
\left[
\begin{array}{c}
1 \\
1
\end{array}
\right]
\theta
\left[
\begin{array}{c}
0 \\
1
\end{array}
\right]
\theta
\left[
\begin{array}{c}
\frac13 \\
\frac43
\end{array}
\right]
\theta
\left[
\begin{array}{c}
\frac23 \\
\frac23
\end{array}
\right]
}
{
\theta
\left[
\begin{array}{c}
0 \\
0
\end{array}
\right]
\theta
\left[
\begin{array}{c}
1 \\
0
\end{array}
\right]
\theta
\left[
\begin{array}{c}
\frac13 \\
\frac13
\end{array}
\right]
\theta
\left[
\begin{array}{c}
\frac23 \\
\frac53
\end{array}
\right]
},
\end{equation*}
which proves the theorem.
The first equality follows from Theorem \ref{thm:analogue-Jacobi-(2/3,1/3), (2/3,5/3)}.
The second equality follows from Jacobi's derivative formula.
\end{proof}

Considering $\psi(z)=1/\varphi(z),$
we obtain the following theta constant identity:

\begin{theorem}
\label{thm:theta-constant-identity-Jacobi-(1/3,1/3)}
{\it
For every $\tau\in\mathbb{H}^2,$ we have
\begin{equation*}
\theta^2
\left[
\begin{array}{c}
\frac13 \\
\frac13
\end{array}
\right]
\theta^2
\left[
\begin{array}{c}
\frac13 \\
\frac43
\end{array}
\right]
+
\zeta_3^2
\theta^2
\left[
\begin{array}{c}
\frac23 \\
\frac23
\end{array}
\right]
\theta^2
\left[
\begin{array}{c}
\frac23 \\
\frac53
\end{array}
\right]
-
\theta^2
\left[
\begin{array}{c}
0 \\
0
\end{array}
\right]
\theta
\left[
\begin{array}{c}
0 \\
1
\end{array}
\right]
\theta
\left[
\begin{array}{c}
\frac23 \\
\frac53
\end{array}
\right]
=0.
\end{equation*}
}
\end{theorem}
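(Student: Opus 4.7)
The plan is to apply the residue theorem to the elliptic function $\psi(z) = 1/\varphi(z)$, with $\varphi$ the elliptic function used in the proof of Theorem~\ref{thm:analogue-Jacobi-(1/3,1/3)}. Ellipticity of $\psi$ is inherited from $\varphi$, and its poles in the fundamental parallelogram are the simple zeros of the three theta factors in the numerator of $\varphi$, namely $z_1 = \tau/3 + 1/3$, $z_2 = \tau/6 + 2/3$, and $z_3 = \tau/2 + 1/2$, coming from the vanishing of $\theta[\tfrac13,\tfrac13]$, $\theta[\tfrac23,\tfrac53]$, and $\theta[0,0]$ respectively.

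To compute $\mathrm{Res}(\psi, z_i)$, I would set $z = z_i + w$ and apply the translation formula (\ref{eqn:real-char}) to each of the six theta factors of $\psi(z_i+w)$, so that the unique vanishing factor becomes $\theta[1,1](w) = \theta'[1,1]\,w + O(w^3)$, while each of the other factors is converted to a non-vanishing theta function in $w$ with shifted characteristic, multiplied by an explicit exponential prefactor. Evaluating these at $w=0$ and reducing the shifted characteristics modulo~$2$ by means of the relation $\theta[\epsilon+2m,\epsilon'+2n] = \exp(\pi i \epsilon n)\,\theta[\epsilon,\epsilon']$ expresses $\mathrm{Res}(\psi, z_i)$ as a quotient of theta constants times a root of unity, divided by $\theta'[1,1]$. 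The residue-theorem identity $\sum_{i=1}^3 \mathrm{Res}(\psi, z_i) = 0$, after multiplication by $\theta'[1,1]$, then reduces to a pure theta constant identity that I expect to match the statement.

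The main obstacle is phase bookkeeping. A useful sanity check is that the $\tau$-dependent parts of the phases from (\ref{eqn:real-char}) cancel within a single residue, because the signed total of theta powers in $\psi$ is $2+1-1-1-1 = 0$. The surviving phases, coming from the $-\pi i m(\epsilon'+n)/2$ part of (\ref{eqn:real-char}) together with the $\exp(\pi i \epsilon n)$ factors picked up from the modular reduction of characteristics, must collapse to the coefficients $1$, $\zeta_3^2$, and $-1$ of the three monomials in the theorem; and the shifted characteristics must normalise to the canonical forms $\theta[\tfrac13,\tfrac13]$, $\theta[\tfrac13,\tfrac43]$, $\theta[\tfrac23,\tfrac23]$, $\theta[\tfrac23,\tfrac53]$, $\theta[0,0]$, $\theta[0,1]$ appearing in the statement. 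Carrying out this collection is delicate but mechanical.
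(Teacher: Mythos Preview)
Your proposal is correct and follows exactly the paper's approach: you take $\psi(z)=1/\varphi(z)$ with $\varphi$ as in Theorem~\ref{thm:analogue-Jacobi-(1/3,1/3)}, locate the three simple poles coming from the zeros of $\theta[\tfrac13,\tfrac13]$, $\theta[\tfrac23,\tfrac53]$, and $\theta[0,0]$, and apply the residue theorem. Your pole $z_2=\tau/6+2/3$ differs from the paper's $(\tau-2)/6$ by a lattice period~$1$, which is harmless, and the rest of the outline (translation by (\ref{eqn:real-char}), reduction of characteristics, cancellation of $\tau$-dependent phases) matches the computation the paper carries out.
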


\begin{proof}
Consider the following elliptic function:
\begin{equation*}
\psi(z)
=
\frac
{
\theta^2
\left[
\begin{array}{c}
1 \\
1
\end{array}
\right](z, \tau)
\theta
\left[
\begin{array}{c}
1 \\
0
\end{array}
\right](z, \tau)
}
{
\theta
\left[
\begin{array}{c}
\frac13 \\
\frac13
\end{array}
\right](z, \tau)
\theta
\left[
\begin{array}{c}
\frac23 \\
\frac35
\end{array}
\right](z, \tau)
\theta
\left[
\begin{array}{c}
0 \\
0
\end{array}
\right](z, \tau)
}.
\end{equation*}
\par
We first note that
in the fundamental parallelogram,
the poles of $\psi(z)$ are $z=(\tau+1)/3,$ $z=(\tau-2)/6$ and $(\tau+1)/2.$
Direct calculation yields
\begin{equation*}
\mathrm{Res}\left(\psi(z), \frac{\tau+1}{3} \right)
=
-\zeta_3^2
\frac
{
\theta^2
\left[
\begin{array}{c}
\frac13 \\
\frac13
\end{array}
\right]
\theta
\left[
\begin{array}{c}
\frac13 \\
\frac43
\end{array}
\right]
}
{
\theta^{\prime}
\left[
\begin{array}{c}
1 \\
1
\end{array}
\right]
\theta
\left[
\begin{array}{c}
\frac23 \\
\frac53
\end{array}
\right]
\theta
\left[
\begin{array}{c}
\frac23 \\
\frac23
\end{array}
\right]
},
\end{equation*}
\begin{equation*}
\mathrm{Res}\left(\psi(z), \frac{\tau-2}{6} \right)
=
-\zeta_3
\frac
{
\theta
\left[
\begin{array}{c}
\frac23 \\
\frac53
\end{array}
\right]
\theta
\left[
\begin{array}{c}
\frac23 \\
\frac23
\end{array}
\right]
}
{
\theta^{\prime}
\left[
\begin{array}{c}
1 \\
1
\end{array}
\right]
\theta
\left[
\begin{array}{c}
\frac13 \\
\frac43
\end{array}
\right]
},
\end{equation*}
and
\begin{equation*}
\mathrm{Res}\left(\psi(z), \frac{\tau+1}{2} \right)
=
\zeta_3^2
\frac
{
\theta^2
\left[
\begin{array}{c}
0 \\
0
\end{array}
\right]
\theta
\left[
\begin{array}{c}
0 \\
1
\end{array}
\right]
}
{
\theta^{\prime}
\left[
\begin{array}{c}
1 \\
1
\end{array}
\right]
\theta
\left[
\begin{array}{c}
\frac23 \\
\frac23
\end{array}
\right]
\theta
\left[
\begin{array}{c}
\frac13 \\
\frac43
\end{array}
\right]
}.
\end{equation*}
Since
$
\mathrm{Res}\left(\psi(z),(\tau+1)/3 \right)+\mathrm{Res}\left(\psi(z), (\tau-2)/6 \right)+\mathrm{Res}\left(\psi(z),(\tau+1)/2 \right)=0,
$
the theorem follows.
\end{proof}

\subsection{Derivative formula for $(\epsilon, \epsilon^{\prime})=(1/3,1)$}

\begin{theorem}
\label{thm:analogue-Jacobi-(1/3,1)}
{\it
For every $\tau\in\mathbb{H}^2,$ we have
\begin{align*}
\frac
{
\theta^{\prime}
\left[
\begin{array}{c}
\frac13 \\
1
\end{array}
\right]
}
{
\theta
\left[
\begin{array}{c}
\frac13 \\
1
\end{array}
\right]
}
=&
-
\frac13
\frac
{
\theta^{\prime}
\left[
\begin{array}{c}
1 \\
1
\end{array}
\right]
\theta^3
\left[
\begin{array}{c}
\frac13 \\
1
\end{array}
\right]
}
{
\theta
\left[
\begin{array}{c}
0 \\
1
\end{array}
\right]
\theta^3
\left[
\begin{array}{c}
\frac23 \\
1
\end{array}
\right]
}
+
\frac
{
\theta^{\prime}
\left[
\begin{array}{c}
1 \\
1
\end{array}
\right]
\theta
\left[
\begin{array}{c}
0 \\
1
\end{array}
\right]
\theta
\left[
\begin{array}{c}
\frac13 \\
0
\end{array}
\right]
\theta
\left[
\begin{array}{c}
\frac23 \\
0
\end{array}
\right]
}
{
\theta
\left[
\begin{array}{c}
0 \\
0
\end{array}
\right]
\theta
\left[
\begin{array}{c}
1 \\
0
\end{array}
\right]
\theta
\left[
\begin{array}{c}
\frac13 \\
1
\end{array}
\right]
\theta
\left[
\begin{array}{c}
\frac23 \\
1
\end{array}
\right]
}  \\
=&
\frac{\pi}{3}
\theta
\left[
\begin{array}{c}
0 \\
0
\end{array}
\right]
\theta
\left[
\begin{array}{c}
1 \\
0
\end{array}
\right]
\frac
{
\theta^3
\left[
\begin{array}{c}
\frac13 \\
1
\end{array}
\right]
}
{
\theta^3
\left[
\begin{array}{c}
\frac23 \\
1
\end{array}
\right]
}
-
\pi
\theta^2
\left[
\begin{array}{c}
0 \\
1
\end{array}
\right]
\frac
{
\theta
\left[
\begin{array}{c}
\frac13 \\
0
\end{array}
\right]
\theta
\left[
\begin{array}{c}
\frac23 \\
0
\end{array}
\right]
}
{
\theta
\left[
\begin{array}{c}
\frac13 \\
1
\end{array}
\right]
\theta
\left[
\begin{array}{c}
\frac23 \\
1
\end{array}
\right]
}.
\end{align*}
}
\end{theorem}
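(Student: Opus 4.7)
The plan is to mimic the proof of Theorem \ref{thm:analogue-Jacobi-(1/3,1/3)} by introducing the elliptic function
\begin{equation*}
\varphi(z)
=
\frac
{
\theta
\left[
\begin{array}{c}
\frac13 \\
1
\end{array}
\right](z, \tau)
\,
\theta
\left[
\begin{array}{c}
\frac23 \\
1
\end{array}
\right](z, \tau)
\,
\theta
\left[
\begin{array}{c}
0 \\
0
\end{array}
\right](z, \tau)
}
{
\theta^2
\left[
\begin{array}{c}
1 \\
1
\end{array}
\right](z, \tau)
\,
\theta
\left[
\begin{array}{c}
1 \\
0
\end{array}
\right](z, \tau)
}
\end{equation*}
and applying the residue theorem (Theorem \ref{thm-fundamental-elliptic-function}). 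Using (\ref{eqn:integer-char}), one checks that the characteristic sums in numerator and denominator agree modulo $2\mathbb{Z}$, so the exponential factors picked up under $z \mapsto z+1$ and $z \mapsto z+\tau$ cancel and $\varphi$ is elliptic. In the fundamental parallelogram it has a double pole at $z=0$ from $\theta^2[1;1]$ and a simple pole at $z=1/2$ from $\theta[1;0]$, whose unique zero lies at $1/2$.

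Since $\theta[0;0]$ is even, its first derivative vanishes at $0$, so the linear Taylor coefficient of the numerator at $0$ collects only the logarithmic derivatives of $\theta[1/3;1]$ and $\theta[2/3;1]$. Dividing by the leading quadratic $(\theta'[1;1])^2\,\theta[1;0]\,z^2$ of the denominator yields
\begin{equation*}
\mathrm{Res}\left(\varphi(z), 0\right)
=
\frac{\theta[0;0]\,\theta[1/3;1]\,\theta[2/3;1]}{(\theta'[1;1])^2\,\theta[1;0]}
\left(
\frac{\theta'[1/3;1]}{\theta[1/3;1]} + \frac{\theta'[2/3;1]}{\theta[2/3;1]}
\right).
\end{equation*}
For the simple pole at $z=1/2$, the shift formula (\ref{eqn:real-char}) with $(m,n)=(0,1)$, together with $\theta[\epsilon;\epsilon^{\prime}+2n] = e^{\pi i\epsilon n}\theta[\epsilon;\epsilon^{\prime}]$, gives $\theta[1/3;1](1/2) = e^{\pi i/3}\theta[1/3;0]$, $\theta[2/3;1](1/2) = e^{2\pi i/3}\theta[2/3;0]$, $\theta[0;0](1/2) = \theta[0;1]$, $\theta[1;1](1/2) = -\theta[1;0]$, and $\theta'[1;0](1/2) = \theta'[1;1]$. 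The two nontrivial phases combine via $e^{\pi i/3}\cdot e^{2\pi i/3} = -1$ to give a clean expression for $\mathrm{Res}(\varphi(z),1/2)$.

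Setting the sum of the two residues equal to zero and clearing denominators produces
\begin{equation*}
\frac{\theta'[1/3;1]}{\theta[1/3;1]} + \frac{\theta'[2/3;1]}{\theta[2/3;1]}
=
\frac{\theta'[1;1]\,\theta[0;1]\,\theta[1/3;0]\,\theta[2/3;0]}{\theta[0;0]\,\theta[1;0]\,\theta[1/3;1]\,\theta[2/3;1]}.
\end{equation*}
To isolate $\theta'[1/3;1]/\theta[1/3;1]$, I invoke Theorem \ref{thm:analogue-Jacobi-(2/3,0), (2/3,1)} to substitute $\theta'[2/3;1]/\theta[2/3;1] = \tfrac{1}{3}\,\theta'[1;1]\,\theta^3[1/3;1]/(\theta^3[2/3;1]\,\theta[0;1])$ and transpose that term to the right. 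This delivers the first equality. The second equality then follows by substituting Jacobi's derivative formula (\ref{eqn:Jacobi-derivative}), $\theta'[1;1] = -\pi\,\theta[0;0]\,\theta[1;0]\,\theta[0;1]$, into both resulting terms and simplifying. The principal obstacle is the careful bookkeeping of the sixth-roots-of-unity phase factors produced by (\ref{eqn:real-char}) at $z=1/2$; once these are shown to multiply to $-1$, the remainder is routine algebra.
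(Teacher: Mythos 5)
Your proof is correct and follows essentially the same route as the paper: the identical elliptic function $\varphi(z)=\theta[\tfrac13;1]\theta[\tfrac23;1]\theta[0;0]/(\theta^2[1;1]\theta[1;0])$, the same two poles at $z=0$ and $z=1/2$, the same residue computation yielding $\theta'[\tfrac13;1]/\theta[\tfrac13;1]+\theta'[\tfrac23;1]/\theta[\tfrac23;1]$ on the left, and the same final substitutions from Theorem \ref{thm:analogue-Jacobi-(2/3,0), (2/3,1)} and Jacobi's derivative formula. The phase bookkeeping at $z=1/2$ (the factors $e^{\pi i/3}\cdot e^{2\pi i/3}=-1$) is handled correctly and reproduces the paper's residue $-\theta[\tfrac13;0]\theta[\tfrac23;0]\theta[0;1]/(\theta^2[1;0]\theta'[1;1])$.
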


\begin{proof}
Consider the following elliptic function:
\begin{equation*}
\varphi(z)
=
\frac
{
\theta
\left[
\begin{array}{c}
\frac13 \\
1
\end{array}
\right](z, \tau)
\theta
\left[
\begin{array}{c}
\frac23 \\
1
\end{array}
\right](z, \tau)
\theta
\left[
\begin{array}{c}
0 \\
0
\end{array}
\right](z, \tau)
}
{
\theta^2
\left[
\begin{array}{c}
1 \\
1
\end{array}
\right](z, \tau)
\theta
\left[
\begin{array}{c}
1 \\
0
\end{array}
\right](z, \tau)
}.
\end{equation*}
\par
We first note that
in the fundamental parallelogram,
the poles of $\varphi(z)$ are $z=0$ and $z=1/2.$
Direct calculation yields
\begin{equation*}
\mathrm{Res}\left(\varphi(z), 0 \right)
=
\frac
{
\theta
\left[
\begin{array}{c}
\frac13 \\
1
\end{array}
\right]
\theta
\left[
\begin{array}{c}
\frac23 \\
1
\end{array}
\right]
\theta
\left[
\begin{array}{c}
0 \\
0
\end{array}
\right]
}
{
\theta^{\prime}
\left[
\begin{array}{c}
1 \\
1
\end{array}
\right]^2
\theta
\left[
\begin{array}{c}
1 \\
0
\end{array}
\right]
}
\left\{
\frac
{
\theta^{\prime}
\left[
\begin{array}{c}
\frac13 \\
1
\end{array}
\right]
}
{
\theta
\left[
\begin{array}{c}
\frac13 \\
1
\end{array}
\right]
}
+
\frac
{
\theta^{\prime}
\left[
\begin{array}{c}
\frac23 \\
1
\end{array}
\right]
}
{
\theta
\left[
\begin{array}{c}
\frac23 \\
1
\end{array}
\right]
}
\right\}
\end{equation*}
and
\begin{equation*}
\mathrm{Res}\left(\varphi(z), \frac12 \right)
=
-
\frac
{
\theta
\left[
\begin{array}{c}
\frac13 \\
0
\end{array}
\right]
\theta
\left[
\begin{array}{c}
\frac23 \\
0
\end{array}
\right]
\theta
\left[
\begin{array}{c}
0 \\
1
\end{array}
\right]
}
{
\theta^2
\left[
\begin{array}{c}
1 \\
0
\end{array}
\right]
\theta^{\prime}
\left[
\begin{array}{c}
1 \\
1
\end{array}
\right]
}.
\end{equation*}
Since
$
\mathrm{Res}\left(\varphi(z),0 \right)+\mathrm{Res}\left(\varphi(z), \frac12 \right)=0,
$
it follows that
\begin{equation*}
\frac
{
\theta^{\prime}
\left[
\begin{array}{c}
\frac13 \\
1
\end{array}
\right]
}
{
\theta
\left[
\begin{array}{c}
\frac13 \\
1
\end{array}
\right]
}
+
\frac
{
\theta^{\prime}
\left[
\begin{array}{c}
\frac23 \\
1
\end{array}
\right]
}
{
\theta
\left[
\begin{array}{c}
\frac23 \\
1
\end{array}
\right]
}
=
\frac
{
\theta^{\prime}
\left[
\begin{array}{c}
1 \\
1
\end{array}
\right]
\theta
\left[
\begin{array}{c}
0 \\
1
\end{array}
\right]
\theta
\left[
\begin{array}{c}
\frac13 \\
0
\end{array}
\right]
\theta
\left[
\begin{array}{c}
\frac23 \\
0
\end{array}
\right]
}
{
\theta
\left[
\begin{array}{c}
0 \\
0
\end{array}
\right]
\theta
\left[
\begin{array}{c}
1 \\
0
\end{array}
\right]
\theta
\left[
\begin{array}{c}
\frac13 \\
1
\end{array}
\right]
\theta
\left[
\begin{array}{c}
\frac23 \\
1
\end{array}
\right]
},
\end{equation*}
which proves the theorem.
The first equality follows from Theorem \ref{thm:analogue-Jacobi-(2/3,0), (2/3,1)}.
The second equality follows from Jacobi's derivative formula.
\end{proof}

Considering $\psi(z)=1/\varphi(z),$
we obtain the following theta constant identity:

\begin{theorem}
\label{thm:theta-constant-identity-Jacobi-(1/3,1)}
{\it
For every $\tau\in\mathbb{H}^2,$ we have
\begin{equation*}
\theta^2
\left[
\begin{array}{c}
\frac13 \\
0
\end{array}
\right]
\theta^2
\left[
\begin{array}{c}
\frac13 \\
1
\end{array}
\right]
+
\zeta_3
\theta^2
\left[
\begin{array}{c}
\frac23 \\
0
\end{array}
\right]
\theta^2
\left[
\begin{array}{c}
\frac23 \\
1
\end{array}
\right]
-
\theta^2
\left[
\begin{array}{c}
0 \\
0
\end{array}
\right]
\theta
\left[
\begin{array}{c}
0 \\
1
\end{array}
\right]
\theta
\left[
\begin{array}{c}
\frac23 \\
1
\end{array}
\right]
=0.
\end{equation*}
}
\end{theorem}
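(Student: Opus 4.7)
The plan is to apply the residue theorem to the reciprocal
$$\psi(z) = \frac{\theta^2\left[\begin{array}{c}1\\1\end{array}\right](z,\tau)\,\theta\left[\begin{array}{c}1\\0\end{array}\right](z,\tau)}{\theta\left[\begin{array}{c}\frac13\\1\end{array}\right](z,\tau)\,\theta\left[\begin{array}{c}\frac23\\1\end{array}\right](z,\tau)\,\theta\left[\begin{array}{c}0\\0\end{array}\right](z,\tau)}$$
of the elliptic function $\varphi(z)$ used in the proof of Theorem \ref{thm:analogue-Jacobi-(1/3,1)}. This mirrors the passage from Theorem \ref{thm:analogue-Jacobi-(1/3,1/3)} to Theorem \ref{thm:theta-constant-identity-Jacobi-(1/3,1/3)}; in particular, ellipticity of $\psi(z)$ is automatic, and the double zero of the numerator at $z=0$ kills what would otherwise be a pole there, so only the zeros of the three factors in the denominator contribute.

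Next I would locate the simple poles of $\psi(z)$ in the fundamental parallelogram. Using the zero-location formula $\zeta=\frac{1-\epsilon}{2}\tau+\frac{1-\epsilon^{\prime}}{2}$ from Section \ref{sec:properties}, they are $z_1=\tau/3$ (from $\theta\left[\begin{array}{c}\frac13\\1\end{array}\right]$), $z_2=\tau/6$ (from $\theta\left[\begin{array}{c}\frac23\\1\end{array}\right]$), and $z_3=(\tau+1)/2$ (from $\theta\left[\begin{array}{c}0\\0\end{array}\right]$). At each $z_j$ I would shift $z\mapsto z_j+w$ and expand to first order in $w$: via (\ref{eqn:real-char}), the vanishing theta factor is converted into an exponential phase times $\theta\left[\begin{array}{c}1\\1\end{array}\right](w,\tau)$, whose linear coefficient is $\theta^{\prime}\left[\begin{array}{c}1\\1\end{array}\right]$, while the remaining numerator and denominator factors become theta constants at $\zeta=0$ with shifted (but still rational) characteristics. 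The characteristic-shift identity $\theta[\epsilon+2m,\epsilon^{\prime}+2n]=\exp(\pi i \epsilon n)\,\theta[\epsilon,\epsilon^{\prime}]$ reduces each such shifted constant to one of the six appearing in the statement, up to a root of unity.

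Finally, Theorem \ref{thm-fundamental-elliptic-function} gives
$\mathrm{Res}(\psi,z_1)+\mathrm{Res}(\psi,z_2)+\mathrm{Res}(\psi,z_3)=0$, and clearing the common factor $1/\theta^{\prime}\left[\begin{array}{c}1\\1\end{array}\right]$ produced at each pole should yield, after multiplication by one overall phase, the identity in the stated form. The main obstacle is the careful bookkeeping of the exponential phases coming from (\ref{eqn:real-char}) together with the characteristic-shift identity: these phases must combine into exactly the coefficients $+1$, $+\zeta_3$, and $-1$ of the three terms in the identity, and any sign error or stray twelfth root of unity in the computation would replace $\zeta_3$ by a different cube root. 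Once those phase factors are correctly pinned down, the rest of the argument is purely mechanical, exactly parallel to the derivation of Theorem \ref{thm:theta-constant-identity-Jacobi-(1/3,1/3)}.
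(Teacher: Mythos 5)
Your proposal is correct and takes essentially the same route as the paper: the paper's proof considers exactly the same elliptic function $\psi(z)$ (the reciprocal of the $\varphi(z)$ from Theorem~\ref{thm:analogue-Jacobi-(1/3,1)}) and applies the residue theorem at its three simple poles, in the same way as in the proof of Theorem~\ref{thm:theta-constant-identity-Jacobi-(1/3,1/3)}. Your pole locations $z=\tau/3$, $z=\tau/6$, $z=(\tau+1)/2$ and your outline of the residue computation match what the paper does.
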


\begin{proof}
Consider the following elliptic function:
\begin{equation*}
\psi(z)
=
\frac
{
\theta^2
\left[
\begin{array}{c}
1 \\
1
\end{array}
\right](z, \tau)
\theta
\left[
\begin{array}{c}
1 \\
0
\end{array}
\right](z, \tau)
}
{
\theta
\left[
\begin{array}{c}
\frac13 \\
1
\end{array}
\right](z, \tau)
\theta
\left[
\begin{array}{c}
\frac23 \\
1
\end{array}
\right](z, \tau)
\theta
\left[
\begin{array}{c}
0 \\
0
\end{array}
\right](z, \tau)
}.
\end{equation*}
The theorem can be proved in the same way as Theorem \ref{thm:theta-constant-identity-Jacobi-(1/3,1/3)}
\end{proof}

\subsection{Derivative formula for $(\epsilon, \epsilon^{\prime})=(1/3,5/3)$}

\begin{theorem}
\label{thm:analogue-Jacobi-(1/3,5/3)}
{\it
For every $\tau\in\mathbb{H}^2,$ we have
\begin{align*}
\frac
{
\theta^{\prime}
\left[
\begin{array}{c}
\frac13 \\
\frac53
\end{array}
\right]
}
{
\theta
\left[
\begin{array}{c}
\frac13 \\
\frac53
\end{array}
\right]
}
=&
\frac13
\frac
{
\theta^{\prime}
\left[
\begin{array}{c}
1 \\
1
\end{array}
\right]
\theta^3
\left[
\begin{array}{c}
\frac13 \\
\frac53
\end{array}
\right]
}
{
\theta
\left[
\begin{array}{c}
0 \\
1
\end{array}
\right]
\theta^3
\left[
\begin{array}{c}
\frac23 \\
\frac13
\end{array}
\right]
}
+
\zeta_3^2
\frac
{
\theta^{\prime}
\left[
\begin{array}{c}
1 \\
1
\end{array}
\right]
\theta
\left[
\begin{array}{c}
0 \\
1
\end{array}
\right]
\theta
\left[
\begin{array}{c}
\frac13 \\
\frac23
\end{array}
\right]
\theta
\left[
\begin{array}{c}
\frac23 \\
\frac43
\end{array}
\right]
}
{
\theta
\left[
\begin{array}{c}
0 \\
0
\end{array}
\right]
\theta
\left[
\begin{array}{c}
1 \\
0
\end{array}
\right]
\theta
\left[
\begin{array}{c}
\frac13 \\
\frac53
\end{array}
\right]
\theta
\left[
\begin{array}{c}
\frac23 \\
\frac13
\end{array}
\right]
}  \\
=&
-
\frac{\pi}{3}
\theta
\left[
\begin{array}{c}
0 \\
0
\end{array}
\right]
\theta
\left[
\begin{array}{c}
1 \\
0
\end{array}
\right]
\frac
{
\theta^3
\left[
\begin{array}{c}
\frac13 \\
\frac53
\end{array}
\right]
}
{
\theta^3
\left[
\begin{array}{c}
\frac23 \\
\frac13
\end{array}
\right]
}
-
\pi
\zeta_3^2
\theta^2
\left[
\begin{array}{c}
0 \\
1
\end{array}
\right]
\frac
{
\theta
\left[
\begin{array}{c}
\frac13 \\
\frac23
\end{array}
\right]
\theta
\left[
\begin{array}{c}
\frac23 \\
\frac43
\end{array}
\right]
}
{
\theta
\left[
\begin{array}{c}
\frac13 \\
\frac53
\end{array}
\right]
\theta
\left[
\begin{array}{c}
\frac23 \\
\frac13
\end{array}
\right]
}.
\end{align*}
}
\end{theorem}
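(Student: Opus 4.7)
The plan is to mimic the residue-theorem strategy used in the proofs of Theorems~\ref{thm:analogue-Jacobi-(1/3,1/3)} and \ref{thm:analogue-Jacobi-(1/3,1)}. I would apply Theorem~\ref{thm-fundamental-elliptic-function} to the elliptic function
$$
\varphi(z)=\frac{\theta\left[\begin{array}{c}\frac13\\ \frac53\end{array}\right](z,\tau)\,\theta\left[\begin{array}{c}\frac23\\ \frac13\end{array}\right](z,\tau)\,\theta\left[\begin{array}{c}0\\ 0\end{array}\right](z,\tau)}{\theta^{2}\left[\begin{array}{c}1\\ 1\end{array}\right](z,\tau)\,\theta\left[\begin{array}{c}1\\ 0\end{array}\right](z,\tau)}.
$$
The companion characteristic $(\tfrac23,\tfrac13)$ is forced by the requirement that the sums of the first and second coordinates across the three numerator and three denominator characters agree modulo $2$; together with the matching count of theta factors top and bottom, the quasi-periodicities \eqref{eqn:integer-char} and \eqref{eqn:real-char} then make $\varphi$ elliptic.

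First I would locate the poles in the fundamental parallelogram: a double pole at $z=0$ from the factor of characteristic $(1,1)$ and a simple pole at $z=\tfrac12$ from the factor of characteristic $(1,0)$. The residue at $0$ is computed by a first-order Taylor expansion of the numerator, using the evenness of $\theta[0,0]$ (so that $\theta^{\prime}[0,0]=0$) and the oddness $\theta[1,1](z,\tau)\sim\theta^{\prime}[1,1]\,z$; the outcome is a constant multiple of $\theta^{\prime}[\tfrac13,\tfrac53]/\theta[\tfrac13,\tfrac53]+\theta^{\prime}[\tfrac23,\tfrac13]/\theta[\tfrac23,\tfrac13]$. The residue at $\tfrac12$ is then obtained by applying the half-period shift $\theta[\epsilon,\epsilon^{\prime}](z+\tfrac12,\tau)=\theta[\epsilon,\epsilon^{\prime}+1](z,\tau)$ to each theta factor and reducing the resulting shifted characteristics into standard form via the $2$-periodicity $\theta[\epsilon,\epsilon^{\prime}+2]=e^{\pi i\epsilon}\theta[\epsilon,\epsilon^{\prime}]$.

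Setting the sum of these two residues to zero produces a single identity expressing the above sum of logarithmic derivatives as $\zeta_{3}^{2}$ times a quotient of theta constants. To isolate $\theta^{\prime}[\tfrac13,\tfrac53]/\theta[\tfrac13,\tfrac53]$, I would substitute the formula of Theorem~\ref{thm:analogue-Jacobi-(2/3,1/3), (2/3,5/3)} for $\theta^{\prime}[\tfrac23,\tfrac13]/\theta[\tfrac23,\tfrac13]$; this delivers the first claimed equality. The second equality then follows by inserting Jacobi's derivative formula \eqref{eqn:Jacobi-derivative} to rewrite $\theta^{\prime}[1,1]$ as $-\pi\,\theta[0,0]\,\theta[1,0]\,\theta[0,1]$.

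The most delicate step is the phase computation at $z=\tfrac12$: the half-period shift carries $(\tfrac13,\tfrac53)$ to $(\tfrac13,\tfrac83)$, and the $2$-periodicity then emits a factor $e^{\pi i\cdot(1/3)\cdot 1}=e^{\pi i/3}=-\zeta_{3}^{2}$, so that $\theta[\tfrac13,\tfrac53](\tfrac12,\tau)=-\zeta_{3}^{2}\,\theta[\tfrac13,\tfrac23]$. Producing the coefficient $+\zeta_{3}^{2}$ of the theorem with the correct sign depends on combining this phase with the minus sign picked up when $\mathrm{Res}(\varphi,\tfrac12)$ is transferred across the sum-of-residues equation; this sign bookkeeping is the main obstacle, and a slip at this stage is precisely what distinguishes the $-\zeta_{3}$ coefficient of Theorem~\ref{thm:analogue-Jacobi-(1/3,1/3)} from the $+\zeta_{3}^{2}$ expected in the present statement.
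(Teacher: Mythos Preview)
Your proposal is correct and follows essentially the same route as the paper: the paper uses precisely the elliptic function $\varphi(z)$ you write down, locates its poles at $z=0$ and $z=\tfrac12$, computes the two residues (obtaining $\mathrm{Res}(\varphi,\tfrac12)=-\zeta_3^{2}\,\theta[\tfrac13,\tfrac23]\theta[\tfrac23,\tfrac43]\theta[0,1]/(\theta^{2}[1,0]\theta'[1,1])$ in agreement with your phase analysis), and then substitutes Theorem~\ref{thm:analogue-Jacobi-(2/3,1/3), (2/3,5/3)} followed by Jacobi's derivative formula, exactly as you outline.
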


\begin{proof}
Consider the following elliptic function:
\begin{equation*}
\varphi(z)
=
\frac
{
\theta
\left[
\begin{array}{c}
\frac13 \\
\frac53
\end{array}
\right](z, \tau)
\theta
\left[
\begin{array}{c}
\frac23 \\
\frac13
\end{array}
\right](z, \tau)
\theta
\left[
\begin{array}{c}
0 \\
0
\end{array}
\right](z, \tau)
}
{
\theta^2
\left[
\begin{array}{c}
1 \\
1
\end{array}
\right](z, \tau)
\theta
\left[
\begin{array}{c}
1 \\
0
\end{array}
\right](z, \tau)
}.
\end{equation*}
\par
We first note that
in the fundamental parallelogram,
the poles of $\varphi(z)$ are $z=0$ and $z=1/2.$
Direct calculation yields
\begin{equation*}
\mathrm{Res}\left(\varphi(z), 0 \right)
=
\frac
{
\theta
\left[
\begin{array}{c}
\frac13 \\
\frac53
\end{array}
\right]
\theta
\left[
\begin{array}{c}
\frac23 \\
\frac13
\end{array}
\right]
\theta
\left[
\begin{array}{c}
0 \\
0
\end{array}
\right]
}
{
\theta^{\prime}
\left[
\begin{array}{c}
1 \\
1
\end{array}
\right]^2
\theta
\left[
\begin{array}{c}
1 \\
0
\end{array}
\right]
}
\left\{
\frac
{
\theta^{\prime}
\left[
\begin{array}{c}
\frac13 \\
\frac53
\end{array}
\right]
}
{
\theta
\left[
\begin{array}{c}
\frac13 \\
\frac53
\end{array}
\right]
}
+
\frac
{
\theta^{\prime}
\left[
\begin{array}{c}
\frac23 \\
\frac13
\end{array}
\right]
}
{
\theta
\left[
\begin{array}{c}
\frac23 \\
\frac13
\end{array}
\right]
}
\right\}
\end{equation*}
and
\begin{equation*}
\mathrm{Res}\left(\varphi(z), \frac12 \right)
=
-
\zeta_3^2
\frac
{
\theta
\left[
\begin{array}{c}
\frac13 \\
\frac23
\end{array}
\right]
\theta
\left[
\begin{array}{c}
\frac23 \\
\frac43
\end{array}
\right]
\theta
\left[
\begin{array}{c}
0 \\
1
\end{array}
\right]
}
{
\theta^2
\left[
\begin{array}{c}
1 \\
0
\end{array}
\right]
\theta^{\prime}
\left[
\begin{array}{c}
1 \\
1
\end{array}
\right]
}.
\end{equation*}
Since
$
\mathrm{Res}\left(\varphi(z),0 \right)+\mathrm{Res}\left(\varphi(z), \frac12 \right)=0,
$
it follows that
\begin{equation*}
\frac
{
\theta^{\prime}
\left[
\begin{array}{c}
\frac13 \\
\frac53
\end{array}
\right]
}
{
\theta
\left[
\begin{array}{c}
\frac13 \\
\frac53
\end{array}
\right]
}
+
\frac
{
\theta^{\prime}
\left[
\begin{array}{c}
\frac23 \\
\frac13
\end{array}
\right]
}
{
\theta
\left[
\begin{array}{c}
\frac23 \\
\frac13
\end{array}
\right]
}
=
\zeta_3^2
\frac
{
\theta^{\prime}
\left[
\begin{array}{c}
1 \\
1
\end{array}
\right]
\theta
\left[
\begin{array}{c}
0 \\
1
\end{array}
\right]
\theta
\left[
\begin{array}{c}
\frac13 \\
\frac23
\end{array}
\right]
\theta
\left[
\begin{array}{c}
\frac23 \\
\frac43
\end{array}
\right]
}
{
\theta
\left[
\begin{array}{c}
0 \\
0
\end{array}
\right]
\theta
\left[
\begin{array}{c}
1 \\
0
\end{array}
\right]
\theta
\left[
\begin{array}{c}
\frac13 \\
\frac53
\end{array}
\right]
\theta
\left[
\begin{array}{c}
\frac23 \\
\frac13
\end{array}
\right]
},
\end{equation*}
which proves the theorem.
The first equality follows from Theorem \ref{thm:analogue-Jacobi-(2/3,1/3), (2/3,5/3)}.
The second equality follows from Jacobi's derivative formula.
\end{proof}

Considering $\psi(z)=1/\varphi(z),$
we obtain the following theta constant identity:

\begin{theorem}
\label{thm:theta-constant-identity-Jacobi-(1/3,5/3)}
{\it
For every $\tau\in\mathbb{H}^2,$ we have
\begin{equation*}
\theta^2
\left[
\begin{array}{c}
\frac13 \\
\frac23
\end{array}
\right]
\theta^2
\left[
\begin{array}{c}
\frac13 \\
\frac53
\end{array}
\right]
+
\zeta_3^2
\theta^2
\left[
\begin{array}{c}
\frac23 \\
\frac13
\end{array}
\right]
\theta^2
\left[
\begin{array}{c}
\frac23 \\
\frac43
\end{array}
\right]
-
\zeta_3
\theta^2
\left[
\begin{array}{c}
0 \\
0
\end{array}
\right]
\theta
\left[
\begin{array}{c}
0 \\
1
\end{array}
\right]
\theta
\left[
\begin{array}{c}
\frac23 \\
\frac13
\end{array}
\right]
=0.
\end{equation*}
}
\end{theorem}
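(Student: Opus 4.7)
The plan is to mimic the proof of Theorem \ref{thm:theta-constant-identity-Jacobi-(1/3,1/3)}, namely to invert the elliptic function $\varphi(z)$ used in the derivative formula Theorem \ref{thm:analogue-Jacobi-(1/3,5/3)} and apply the residue theorem. Specifically, I would set
$$
\psi(z)=\frac{1}{\varphi(z)}
=\frac{\theta^2\left[\begin{array}{c}1\\1\end{array}\right](z,\tau)\,\theta\left[\begin{array}{c}1\\0\end{array}\right](z,\tau)}
{\theta\left[\begin{array}{c}\frac13\\\frac53\end{array}\right](z,\tau)\,\theta\left[\begin{array}{c}\frac23\\\frac13\end{array}\right](z,\tau)\,\theta\left[\begin{array}{c}0\\0\end{array}\right](z,\tau)},
$$
which is again an elliptic function of the same periods. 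The zeros of the three theta factors in the denominator, located in the fundamental parallelogram using the rule $\zeta=\frac{1-\epsilon}{2}\tau+\frac{1-\epsilon^{\prime}}{2}$, occur respectively at $z=(\tau-1)/3$, $z=(\tau+2)/6$, and $z=(\tau+1)/2$, each simple, and the numerator does not vanish at these points, so these are the only poles.

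Next I would compute the three residues. At each simple pole $z_0$ the residue is the value of the numerator divided by the derivative of the vanishing theta factor at $z_0$, times the remaining theta factors evaluated at $z_0$. Those evaluations are carried out by invoking the quasi-periodicity relation (\ref{eqn:real-char}) to convert every shifted theta value back to a theta constant, picking up explicit exponential factors in $\pi i$ times rational multiples of the characteristics; the term $\theta^{\prime}\left[\begin{array}{c}1\\1\end{array}\right]$ from the numerator appears squared at $z=(\tau-1)/3$ (where two theta functions vanish when one includes $\theta[1,1](z,\tau)^2$, but actually the $\theta[1,1]$ factors are in the numerator so only one theta vanishes in the denominator at each pole) and in the first power at the other two poles. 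This bookkeeping of roots of unity $\zeta_3, \zeta_3^2$ arising from $\exp(2\pi i \cdot \text{rationals})$ is the principal source of the coefficients appearing in the stated identity.

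The main obstacle will be exactly this meticulous tracking of phase factors when applying (\ref{eqn:real-char}) at each of the three poles: one has to match characteristics like $\frac23, \frac43, \frac53$ mod $2$, verify that the characteristics produced coincide (up to the integer-shift rule (\ref{eqn:integer-char})) with those appearing in the target identity, and confirm that the signs and powers of $\zeta_3$ combine correctly. Once the three residues are in hand, the residue theorem
$$
\mathrm{Res}\!\left(\psi,\tfrac{\tau-1}{3}\right)+\mathrm{Res}\!\left(\psi,\tfrac{\tau+2}{6}\right)+\mathrm{Res}\!\left(\psi,\tfrac{\tau+1}{2}\right)=0
$$
yields, after multiplying through by a common factor $\theta^{\prime}\left[\begin{array}{c}1\\1\end{array}\right]\cdot\text{(products of theta constants)}$, exactly the relation
$$
\theta^2\left[\tfrac13,\tfrac23\right]\theta^2\left[\tfrac13,\tfrac53\right]
+\zeta_3^{2}\,\theta^2\left[\tfrac23,\tfrac13\right]\theta^2\left[\tfrac23,\tfrac43\right]
-\zeta_3\,\theta^2\left[0,0\right]\theta\left[0,1\right]\theta\left[\tfrac23,\tfrac13\right]=0,
$$
completing the proof.
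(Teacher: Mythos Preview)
Your approach is exactly the one the paper uses: set $\psi(z)=1/\varphi(z)$ with the denominator $\theta\!\left[\begin{smallmatrix}\frac13\\\frac53\end{smallmatrix}\right]\theta\!\left[\begin{smallmatrix}\frac23\\\frac13\end{smallmatrix}\right]\theta\!\left[\begin{smallmatrix}0\\0\end{smallmatrix}\right]$, locate the three simple poles at $(\tau-1)/3$, $(\tau+2)/6$, $(\tau+1)/2$, evaluate the residues via (\ref{eqn:real-char}), and sum them to zero. One small slip: your parenthetical about $\theta'\!\left[\begin{smallmatrix}1\\1\end{smallmatrix}\right]$ ``appearing squared'' at $(\tau-1)/3$ is off --- the numerator $\theta^2\!\left[\begin{smallmatrix}1\\1\end{smallmatrix}\right](z)$ contributes a \emph{theta constant} squared (namely $\theta^2\!\left[\begin{smallmatrix}\frac13\\\frac53\end{smallmatrix}\right]$ up to a phase), while the single $\theta'\!\left[\begin{smallmatrix}1\\1\end{smallmatrix}\right]$ comes from differentiating the vanishing denominator factor; but you effectively self-correct and the rest of the outline is sound.
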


\begin{proof}
Consider the following elliptic function:
\begin{equation*}
\psi(z)
=
\frac
{
\theta^2
\left[
\begin{array}{c}
1 \\
1
\end{array}
\right](z, \tau)
\theta
\left[
\begin{array}{c}
1 \\
0
\end{array}
\right](z, \tau)
}
{
\theta
\left[
\begin{array}{c}
\frac13 \\
\frac53
\end{array}
\right](z, \tau)
\theta
\left[
\begin{array}{c}
\frac23 \\
\frac13
\end{array}
\right](z, \tau)
\theta
\left[
\begin{array}{c}
0 \\
0
\end{array}
\right](z, \tau)
}.
\end{equation*}
The theorem can be proved in the same way as Theorem \ref{thm:theta-constant-identity-Jacobi-(1/3,1/3)}
\end{proof}

\subsection{Derivative formula for $(\epsilon, \epsilon^{\prime})=(1,1/3)$}

\begin{theorem}
\label{thm:analogue-Jacobi-(1,1/3)}
{\it
For every $\tau\in\mathbb{H}^2,$ we have
\begin{align*}
\frac
{
\theta^{\prime}
\left[
\begin{array}{c}
1 \\
\frac13
\end{array}
\right]
}
{
\theta
\left[
\begin{array}{c}
1 \\
\frac13
\end{array}
\right]
}
=&
-
\frac13
\frac
{
\theta^{\prime}
\left[
\begin{array}{c}
1 \\
1
\end{array}
\right]
\theta^3
\left[
\begin{array}{c}
1 \\
\frac13
\end{array}
\right]
}
{
\theta
\left[
\begin{array}{c}
1 \\
0
\end{array}
\right]
\theta^3
\left[
\begin{array}{c}
1 \\
\frac23
\end{array}
\right]
}
+
\frac
{
\theta^{\prime}
\left[
\begin{array}{c}
1 \\
1
\end{array}
\right]
\theta
\left[
\begin{array}{c}
1 \\
0
\end{array}
\right]
\theta
\left[
\begin{array}{c}
0 \\
\frac13
\end{array}
\right]
\theta
\left[
\begin{array}{c}
0 \\
\frac23
\end{array}
\right]
}
{
\theta
\left[
\begin{array}{c}
0 \\
0
\end{array}
\right]
\theta
\left[
\begin{array}{c}
0\\
1
\end{array}
\right]
\theta
\left[
\begin{array}{c}
1 \\
\frac13
\end{array}
\right]
\theta
\left[
\begin{array}{c}
1 \\
\frac23
\end{array}
\right]
}  \\
=&
\frac{\pi}{3}
\theta
\left[
\begin{array}{c}
0 \\
0
\end{array}
\right]
\theta
\left[
\begin{array}{c}
0 \\
1
\end{array}
\right]
\frac
{
\theta^3
\left[
\begin{array}{c}
1 \\
\frac13
\end{array}
\right]
}
{
\theta^3
\left[
\begin{array}{c}
1 \\
\frac23
\end{array}
\right]
}
-
\pi
\theta^2
\left[
\begin{array}{c}
1 \\
0
\end{array}
\right]
\frac
{
\theta
\left[
\begin{array}{c}
0 \\
\frac13
\end{array}
\right]
\theta
\left[
\begin{array}{c}
0 \\
\frac23
\end{array}
\right]
}
{
\theta
\left[
\begin{array}{c}
1 \\
\frac13
\end{array}
\right]
\theta
\left[
\begin{array}{c}
1 \\
\frac23
\end{array}
\right]
}.
\end{align*}
}
\end{theorem}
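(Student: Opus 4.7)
The plan is to parallel the proofs of Theorems \ref{thm:analogue-Jacobi-(1/3,1/3)}, \ref{thm:analogue-Jacobi-(1/3,1)}, and \ref{thm:analogue-Jacobi-(1/3,5/3)}: apply the residue theorem to a carefully chosen elliptic function whose pole structure couples $\theta'[1,\tfrac{1}{3}]/\theta[1,\tfrac{1}{3}]$ to $\theta'[1,\tfrac{2}{3}]/\theta[1,\tfrac{2}{3}]$, and then invoke Theorem \ref{thm:analogue-Jacobi-(1,2/3)} to isolate the desired quantity. Concretely, I would take
\begin{equation*}
\varphi(z) = \frac{\theta\!\left[\begin{smallmatrix}1\\ \frac{1}{3}\end{smallmatrix}\right]\!(z,\tau)\, \theta\!\left[\begin{smallmatrix}1\\ \frac{2}{3}\end{smallmatrix}\right]\!(z,\tau)\, \theta\!\left[\begin{smallmatrix}0\\ 0\end{smallmatrix}\right]\!(z,\tau)}{\theta^2\!\left[\begin{smallmatrix}1\\ 1\end{smallmatrix}\right]\!(z,\tau)\, \theta\!\left[\begin{smallmatrix}0\\ 1\end{smallmatrix}\right]\!(z,\tau)}.
\end{equation*}
A check of the characteristic sums (numerator $(2,1)$, denominator $(2,3)$, differing by the even vector $(0,-2)$) together with matching theta-factor counts ($3$ in each) confirms that $\varphi$ is doubly periodic.

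In the fundamental parallelogram $\varphi$ has a double pole at $z=0$ and a simple pole at $z=\tau/2$. At $z=0$, the odd expansion of $\theta[\begin{smallmatrix}1\\1\end{smallmatrix}](z)$ together with the evenness of $\theta[\begin{smallmatrix}0\\0\end{smallmatrix}]$ and $\theta[\begin{smallmatrix}0\\1\end{smallmatrix}]$ (so that their $z$-derivatives vanish at $0$) reduces the residue to a clean expression proportional to $\theta'[1,\tfrac{1}{3}]/\theta[1,\tfrac{1}{3}] + \theta'[1,\tfrac{2}{3}]/\theta[1,\tfrac{2}{3}]$, exactly as in the earlier level-$3$ proofs. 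For the residue at $z=\tau/2$, I would substitute $z = \tau/2 + \zeta$, apply the transformation rule (\ref{eqn:real-char}) with $(m,n)=(1,0)$ to each theta factor, use the identity $\theta[\epsilon+2,\epsilon']=\theta[\epsilon,\epsilon']$ to rewrite the resulting $\theta[2,\epsilon']$'s as $\theta[0,\epsilon']$'s, and extract the simple-pole contribution from the leading term of $\theta[\begin{smallmatrix}1\\1\end{smallmatrix}](\zeta)$; the cumulative exponential prefactors should collapse to an overall sign of $-1$.

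Setting the sum of the two residues equal to zero then yields
\begin{equation*}
\frac{\theta'\!\left[\begin{smallmatrix}1\\ \frac{1}{3}\end{smallmatrix}\right]}{\theta\!\left[\begin{smallmatrix}1\\ \frac{1}{3}\end{smallmatrix}\right]}+\frac{\theta'\!\left[\begin{smallmatrix}1\\ \frac{2}{3}\end{smallmatrix}\right]}{\theta\!\left[\begin{smallmatrix}1\\ \frac{2}{3}\end{smallmatrix}\right]}=\frac{\theta'\!\left[\begin{smallmatrix}1\\ 1\end{smallmatrix}\right]\theta\!\left[\begin{smallmatrix}1\\ 0\end{smallmatrix}\right]\theta\!\left[\begin{smallmatrix}0\\ \frac{1}{3}\end{smallmatrix}\right]\theta\!\left[\begin{smallmatrix}0\\ \frac{2}{3}\end{smallmatrix}\right]}{\theta\!\left[\begin{smallmatrix}0\\ 0\end{smallmatrix}\right]\theta\!\left[\begin{smallmatrix}0\\ 1\end{smallmatrix}\right]\theta\!\left[\begin{smallmatrix}1\\ \frac{1}{3}\end{smallmatrix}\right]\theta\!\left[\begin{smallmatrix}1\\ \frac{2}{3}\end{smallmatrix}\right]}.
\end{equation*}
Substituting the expression for $\theta'[1,\tfrac{2}{3}]/\theta[1,\tfrac{2}{3}]$ furnished by Theorem \ref{thm:analogue-Jacobi-(1,2/3)} then isolates $\theta'[1,\tfrac{1}{3}]/\theta[1,\tfrac{1}{3}]$ and yields the first displayed equality of the theorem; the second equality follows immediately from Jacobi's derivative formula (\ref{eqn:Jacobi-derivative}), which rewrites $\theta'[1,1]$ as $-\pi\,\theta[0,0]\theta[1,0]\theta[0,1]$. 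I expect the main delicate point to be the phase bookkeeping in the residue at $z=\tau/2$: one must verify that the accumulated $\exp(2\pi i\cdot)$ prefactors from (\ref{eqn:real-char}) combine exactly to $-1$, with no spurious root-of-unity factor that would obstruct matching the target identity.
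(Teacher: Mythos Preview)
Your proposal is correct and follows essentially the same argument as the paper: apply the residue theorem to an auxiliary elliptic function with a double pole at $z=0$ and a simple pole at a half-period, obtain the relation
\[
\frac{\theta'\!\left[\begin{smallmatrix}1\\ \tfrac13\end{smallmatrix}\right]}{\theta\!\left[\begin{smallmatrix}1\\ \tfrac13\end{smallmatrix}\right]}
+\frac{\theta'\!\left[\begin{smallmatrix}1\\ \tfrac23\end{smallmatrix}\right]}{\theta\!\left[\begin{smallmatrix}1\\ \tfrac23\end{smallmatrix}\right]}
=\frac{\theta'\!\left[\begin{smallmatrix}1\\ 1\end{smallmatrix}\right]\theta\!\left[\begin{smallmatrix}1\\ 0\end{smallmatrix}\right]\theta\!\left[\begin{smallmatrix}0\\ \tfrac13\end{smallmatrix}\right]\theta\!\left[\begin{smallmatrix}0\\ \tfrac23\end{smallmatrix}\right]}{\theta\!\left[\begin{smallmatrix}0\\ 0\end{smallmatrix}\right]\theta\!\left[\begin{smallmatrix}0\\ 1\end{smallmatrix}\right]\theta\!\left[\begin{smallmatrix}1\\ \tfrac13\end{smallmatrix}\right]\theta\!\left[\begin{smallmatrix}1\\ \tfrac23\end{smallmatrix}\right]},
\]
then eliminate the $(1,\tfrac23)$ term via Theorem~\ref{thm:analogue-Jacobi-(1,2/3)} and rewrite using Jacobi's derivative formula. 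The only difference is cosmetic: the paper places $\theta\!\left[\begin{smallmatrix}0\\1\end{smallmatrix}\right]$ in the numerator and $\theta\!\left[\begin{smallmatrix}0\\0\end{smallmatrix}\right]$ in the denominator (simple pole at $(\tau+1)/2$), whereas you swap these (simple pole at $\tau/2$); both choices are elliptic and lead to the same intermediate identity, so your phase bookkeeping is fine.
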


\begin{proof}
Consider the following elliptic function:
\begin{equation*}
\varphi(z)
=
\frac
{
\theta
\left[
\begin{array}{c}
1 \\
\frac13
\end{array}
\right](z, \tau)
\theta
\left[
\begin{array}{c}
1 \\
\frac23
\end{array}
\right](z, \tau)
\theta
\left[
\begin{array}{c}
0 \\
1
\end{array}
\right](z, \tau)
}
{
\theta^2
\left[
\begin{array}{c}
1 \\
1
\end{array}
\right](z, \tau)
\theta
\left[
\begin{array}{c}
0 \\
0
\end{array}
\right](z, \tau)
}.
\end{equation*}
\par
We first note that
in the fundamental parallelogram,
the poles of $\varphi(z)$ are $z=0$ and $z=(\tau+1)/2.$
Direct calculation yields
\begin{equation*}
\mathrm{Res}\left(\varphi(z), 0 \right)
=
\frac
{
\theta
\left[
\begin{array}{c}
1 \\
\frac13
\end{array}
\right]
\theta
\left[
\begin{array}{c}
1 \\
\frac23
\end{array}
\right]
\theta
\left[
\begin{array}{c}
0 \\
1
\end{array}
\right]
}
{
\theta^{\prime}
\left[
\begin{array}{c}
1 \\
1
\end{array}
\right]^2
\theta
\left[
\begin{array}{c}
0 \\
0
\end{array}
\right]
}
\left\{
\frac
{
\theta^{\prime}
\left[
\begin{array}{c}
1 \\
\frac13
\end{array}
\right]
}
{
\theta
\left[
\begin{array}{c}
1 \\
\frac13
\end{array}
\right]
}
+
\frac
{
\theta^{\prime}
\left[
\begin{array}{c}
1 \\
\frac23
\end{array}
\right]
}
{
\theta
\left[
\begin{array}{c}
1 \\
\frac23
\end{array}
\right]
}
\right\}
\end{equation*}
and
\begin{equation*}
\mathrm{Res}\left(\varphi(z), \frac{\tau+1}{2} \right)
=
-
\frac
{
\theta
\left[
\begin{array}{c}
0 \\
\frac13
\end{array}
\right]
\theta
\left[
\begin{array}{c}
0 \\
\frac23
\end{array}
\right]
\theta
\left[
\begin{array}{c}
1 \\
0
\end{array}
\right]
}
{
\theta^2
\left[
\begin{array}{c}
0 \\
0
\end{array}
\right]
\theta^{\prime}
\left[
\begin{array}{c}
1 \\
1
\end{array}
\right]
}.
\end{equation*}
Since
$
\mathrm{Res}\left(\varphi(z),0 \right)+\mathrm{Res}\left(\varphi(z), \frac{\tau+1}{2} \right)=0,
$
it follows that
\begin{equation*}
\frac
{
\theta^{\prime}
\left[
\begin{array}{c}
1 \\
\frac13
\end{array}
\right]
}
{
\theta
\left[
\begin{array}{c}
1 \\
\frac13
\end{array}
\right]
}
+
\frac
{
\theta^{\prime}
\left[
\begin{array}{c}
1 \\
\frac23
\end{array}
\right]
}
{
\theta
\left[
\begin{array}{c}
1 \\
\frac23
\end{array}
\right]
}
=
\frac
{
\theta^{\prime}
\left[
\begin{array}{c}
1 \\
1
\end{array}
\right]
\theta
\left[
\begin{array}{c}
1 \\
0
\end{array}
\right]
\theta
\left[
\begin{array}{c}
0 \\
\frac13
\end{array}
\right]
\theta
\left[
\begin{array}{c}
0 \\
\frac23
\end{array}
\right]
}
{
\theta
\left[
\begin{array}{c}
0 \\
0
\end{array}
\right]
\theta
\left[
\begin{array}{c}
0 \\
1
\end{array}
\right]
\theta
\left[
\begin{array}{c}
1 \\
\frac13
\end{array}
\right]
\theta
\left[
\begin{array}{c}
1 \\
\frac23
\end{array}
\right]
},
\end{equation*}
which proves the theorem.
The first equality follows from Theorem \ref{thm:analogue-Jacobi-(1,2/3)}.
The second equality follows from Jacobi's derivative formula.
\end{proof}

Considering $\psi(z)=1/\varphi(z),$
we obtain the following theta constant identity:

\begin{theorem}
\label{thm:theta-constant-identity-Jacobi-(1,1/3)}
{\it
For every $\tau\in\mathbb{H}^2,$ we have
\begin{equation*}
\theta^2
\left[
\begin{array}{c}
1 \\
\frac13
\end{array}
\right]
\theta^2
\left[
\begin{array}{c}
0 \\
\frac23
\end{array}
\right]
-
\theta^2
\left[
\begin{array}{c}
0 \\
\frac13
\end{array}
\right]
\theta^2
\left[
\begin{array}{c}
1 \\
\frac23
\end{array}
\right]
-
\theta^2
\left[
\begin{array}{c}
0 \\
1
\end{array}
\right]
\theta
\left[
\begin{array}{c}
1 \\
0
\end{array}
\right]
\theta
\left[
\begin{array}{c}
1 \\
\frac23
\end{array}
\right]
=0.
\end{equation*}
}
\end{theorem}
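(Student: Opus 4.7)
The plan is to apply the residue theorem to the elliptic function
$$\psi(z)=\frac{\theta^2\left[\begin{array}{c}1\\1\end{array}\right](z,\tau)\,\theta\left[\begin{array}{c}0\\0\end{array}\right](z,\tau)}{\theta\left[\begin{array}{c}1\\\frac13\end{array}\right](z,\tau)\,\theta\left[\begin{array}{c}1\\\frac23\end{array}\right](z,\tau)\,\theta\left[\begin{array}{c}0\\1\end{array}\right](z,\tau)}=\frac{1}{\varphi(z)},$$
where $\varphi$ is the elliptic function from the proof of Theorem \ref{thm:analogue-Jacobi-(1,1/3)}. First I would note that $\psi$ is elliptic with periods $1$ and $\tau$ (the quasi-periodicity exponents from (\ref{eqn:integer-char}) cancel exactly as they did for $\varphi$). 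Then I would identify the poles of $\psi$ in the fundamental parallelogram: they are simple and occur at the zeros of the three denominator factors, which by the remark at the end of Section \ref{sec:properties} lie at $z=\tfrac13$ (from $\theta\left[\begin{array}{c}1\\\frac13\end{array}\right]$), $z=\tfrac16$ (from $\theta\left[\begin{array}{c}1\\\frac23\end{array}\right]$), and $z=\tfrac{\tau}{2}$ (from $\theta\left[\begin{array}{c}0\\1\end{array}\right]$).

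Next I would compute each residue via $\mathrm{Res}(g/h,z_0)=g(z_0)/h'(z_0)$, using the translation formula (\ref{eqn:real-char}) systematically to convert the value of each theta factor at the pole into a theta constant. Three key simplifications will carry us through: shifting by $\tfrac13$ (formula (\ref{eqn:real-char}) with $n=\tfrac23,\,m=0$) sends $\theta\left[\begin{array}{c}1\\\frac13\end{array}\right]$ to $\theta\left[\begin{array}{c}1\\1\end{array}\right]$, so the derivative at $z=\tfrac13$ becomes $\theta'\left[\begin{array}{c}1\\1\end{array}\right]$; shifting by $\tfrac16$ (with $n=\tfrac13,\,m=0$) sends $\theta\left[\begin{array}{c}1\\\frac23\end{array}\right]$ to $\theta\left[\begin{array}{c}1\\1\end{array}\right]$ similarly; shifting by $\tfrac{\tau}{2}$ (with $n=0,\,m=1$) sends $\theta\left[\begin{array}{c}0\\1\end{array}\right]$ to $\theta\left[\begin{array}{c}1\\1\end{array}\right]$ up to an explicit exponential factor, and its derivative at $\tfrac{\tau}{2}$ is a constant multiple of $\theta'\left[\begin{array}{c}1\\1\end{array}\right]$. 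The remaining theta factors in the numerator are rewritten in the same way, using (\ref{eqn:integer-char}) to reduce characteristics modulo $2$ as needed.

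Finally I would invoke Theorem \ref{thm-fundamental-elliptic-function}: the sum of the three residues vanishes. Clearing the common factor $1/\theta'\left[\begin{array}{c}1\\1\end{array}\right]$ and the common quadratic factor $\theta\left[\begin{array}{c}0\\0\end{array}\right]$ (which appears in the numerator of every residue), I expect each of the three residues to contribute exactly one of the three terms of the claimed identity: the residue at $z=\tfrac13$ produces $\theta^2\left[\begin{array}{c}1\\\frac13\end{array}\right]\theta^2\left[\begin{array}{c}0\\\frac23\end{array}\right]$, the residue at $z=\tfrac16$ produces $-\theta^2\left[\begin{array}{c}0\\\frac13\end{array}\right]\theta^2\left[\begin{array}{c}1\\\frac23\end{array}\right]$, and the residue at $z=\tfrac{\tau}{2}$ produces the final term $-\theta^2\left[\begin{array}{c}0\\1\end{array}\right]\theta\left[\begin{array}{c}1\\0\end{array}\right]\theta\left[\begin{array}{c}1\\\frac23\end{array}\right]$, exactly as in the analogous proof of Theorem \ref{thm:theta-constant-identity-Jacobi-(1/3,1/3)}.

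The main obstacle is the careful bookkeeping of the phase factors. At $z=\tfrac13$ and $z=\tfrac16$ the shifts have $m=0$ so no exponential appears, but at $z=\tfrac{\tau}{2}$ the factor $\exp\!\big(2\pi i[-m\zeta/2-m^2\tau/8-m(\epsilon'+n)/4]\big)$ from (\ref{eqn:real-char}) contributes nontrivial $\exp(-\pi i\tau/4)$- and $\exp(-\pi i\epsilon'/2)$-type pieces for each theta factor in numerator and denominator. These must be combined correctly, and any reductions of characteristics by (\ref{eqn:integer-char}) carry their own signs $\exp(\pi i \epsilon n)$. The precise cancellation of all these phases is what produces the unadorned coefficients $+1,\,-1,\,-1$ in the final identity (note the absence of $\zeta_3$ factors, in contrast to Theorem \ref{thm:theta-constant-identity-Jacobi-(1/3,1/3)}), and this is the only nontrivial checking in the proof.
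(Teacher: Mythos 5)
Your proposal is correct and follows essentially the same route as the paper: the paper's proof is precisely to apply the residue theorem to $\psi(z)=1/\varphi(z)$ with $\varphi$ taken from the proof of Theorem \ref{thm:analogue-Jacobi-(1,1/3)}, and your identification of the three simple poles $z=\tfrac13,\tfrac16,\tfrac{\tau}{2}$ and of the term each residue contributes is exactly what the computation yields. One small slip in your sketch: the numerator factor $\theta\left[\begin{array}{c}0\\0\end{array}\right](z,\tau)$ is not a common factor of the three residues --- evaluated at the three poles it becomes the distinct constants $\theta\left[\begin{array}{c}0\\ \frac23\end{array}\right]$, $\theta\left[\begin{array}{c}0\\ \frac13\end{array}\right]$, and (up to phase) $\theta\left[\begin{array}{c}1\\0\end{array}\right]$, which is precisely how those factors enter the three terms of the identity; what one actually clears is the common factor $1/\theta^{\prime}\left[\begin{array}{c}1\\1\end{array}\right]$ together with the common denominator $\theta\left[\begin{array}{c}1\\ \frac23\end{array}\right]\theta\left[\begin{array}{c}0\\ \frac13\end{array}\right]\theta\left[\begin{array}{c}0\\ \frac23\end{array}\right]$.
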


\begin{proof}
Consider the following elliptic function:
\begin{equation*}
\psi(z)
=
\frac
{
\theta^2
\left[
\begin{array}{c}
1 \\
1
\end{array}
\right](z, \tau)
\theta
\left[
\begin{array}{c}
0 \\
0
\end{array}
\right](z, \tau)
}
{
\theta
\left[
\begin{array}{c}
1 \\
\frac13
\end{array}
\right](z, \tau)
\theta
\left[
\begin{array}{c}
1 \\
\frac23
\end{array}
\right](z, \tau)
\theta
\left[
\begin{array}{c}
0 \\
1
\end{array}
\right](z, \tau)
}.
\end{equation*}
The theorem can be proved in the same way as Theorem \ref{thm:theta-constant-identity-Jacobi-(1/3,1/3)}
\end{proof}

\section{Derivative formulas of level 8}
\label{sec:derivative-level8}

\subsection{Derivative formulas for $(\epsilon,\epsilon^{\prime})=(0,1/4), (0,3/4)$}

\begin{theorem}
\label{thm:analogue-Jacobi-(0,1/4), (0,3/4)}
{\it
For every $\tau\in\mathbb{H}^2,$ we have
\begin{equation*}
\frac{
\theta^{\prime}
\left[
\begin{array}{c}
0 \\
\frac14
\end{array}
\right]
}
{
\theta
\left[
\begin{array}{c}
0 \\
\frac14
\end{array}
\right]
}
=
\frac
{\theta^{\prime}
\left[
\begin{array}{c}
1 \\
1
\end{array}
\right]
\theta^3
\left[
\begin{array}{c}
1 \\
\frac12
\end{array}
\right]
\left(
\theta^2
\left[
\begin{array}{c}
0 \\
\frac14
\end{array}
\right]
+
3
\theta^2
\left[
\begin{array}{c}
0 \\
\frac34
\end{array}
\right]
\right)
}
{
8
\theta^3
\left[
\begin{array}{c}
0 \\
\frac14
\end{array}
\right]
\theta^3
\left[
\begin{array}{c}
0 \\
\frac34
\end{array}
\right]
}
\end{equation*}
and
\begin{equation*}
\frac
{
\theta^{\prime}
\left[
\begin{array}{c}
0 \\
\frac34
\end{array}
\right]
}
{
\theta
\left[
\begin{array}{c}
0 \\
\frac34
\end{array}
\right]
}
=
\frac
{
\theta^{\prime}
\left[
\begin{array}{c}
1 \\
1
\end{array}
\right]
\theta^3
\left[
\begin{array}{c}
1 \\
\frac12
\end{array}
\right]
\left(
3
\theta^2
\left[
\begin{array}{c}
0 \\
\frac14
\end{array}
\right]
+
\theta^2
\left[
\begin{array}{c}
0 \\
\frac34
\end{array}
\right]
\right)
}
{
8
\theta^3
\left[
\begin{array}{c}
0 \\
\frac14
\end{array}
\right]
\theta^3
\left[
\begin{array}{c}
0 \\
\frac34
\end{array}
\right]
}.
\end{equation*}
}
\end{theorem}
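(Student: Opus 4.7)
The plan is to imitate the two-function strategy used in the proof of Theorem \ref{thm:analogue-Jacobi-(1/5,1/5), (3/5,3/5)}. I introduce
\begin{equation*}
\varphi(z)=\frac{\theta^{3}\left[\begin{array}{c}1\\1\end{array}\right](z,\tau)}{\theta^{2}\left[\begin{array}{c}0\\ \frac14\end{array}\right](z,\tau)\,\theta\left[\begin{array}{c}1\\ \frac12\end{array}\right](z,\tau)},\qquad \psi(z)=\frac{\theta^{3}\left[\begin{array}{c}1\\1\end{array}\right](z,\tau)}{\theta^{2}\left[\begin{array}{c}0\\ \frac34\end{array}\right](z,\tau)\,\theta\left[\begin{array}{c}1\\ -\frac12\end{array}\right](z,\tau)},
\end{equation*}
and verify ellipticity directly from (\ref{eqn:integer-char}). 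In the fundamental parallelogram $\varphi$ has a simple pole at $z=\frac14$ (from $\theta[1,\frac12]$) and a double pole at $z_{0}=\frac{\tau}{2}+\frac38$ (from $\theta^{2}[0,\frac14]$); by symmetry $\psi$ has a simple pole at $z=\frac34$ and a double pole at $z_{0}'=\frac{\tau}{2}+\frac18$.

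The simple-pole residues are routine: applying (\ref{eqn:real-char}) with $(m,n)=(0,\frac12)$ for $\varphi$ and with $(m,n)=(0,\frac32)$ for $\psi$ translates $\frac14$ and $\frac34$ to the origin, and gives closed expressions such as $\mathrm{Res}(\varphi,\frac14)=-\theta^{3}[1,\frac12]/\bigl(\theta^{2}[0,\frac34]\,\theta'[1,1]\bigr)$. The double-pole residue is the delicate step. For $\varphi$, applying (\ref{eqn:real-char}) with $(m,n)=(1,\frac34)$ rewrites $\varphi(z_{0}+w)$, up to an explicit overall sign, as a ratio of the form $\theta^{3}[0,-\frac14](w,\tau)/\bigl(\theta^{2}[1,1](w,\tau)\,\theta[0,-\frac34](w,\tau)\bigr)$. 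Because $\theta[1,1](w,\tau)=\theta'[1,1]\,w+O(w^{3})$, the function $w^{2}\varphi(z_{0}+w)$ extends analytically through $w=0$, so $\mathrm{Res}(\varphi,z_{0})$ is its derivative at $w=0$. A logarithmic derivative of $\theta^{3}[0,-\frac14]/\theta[0,-\frac34]$ combined with the parity relations $\theta[0,-\epsilon']=\theta[0,\epsilon']$ and $\theta'[0,-\epsilon']=-\theta'[0,\epsilon']$ converts the result into a linear combination of $X:=\theta'[0,\frac14]/\theta[0,\frac14]$ and $Y:=\theta'[0,\frac34]/\theta[0,\frac34]$.

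Invoking Theorem \ref{thm-fundamental-elliptic-function} for both $\varphi$ and $\psi$ then yields the pair of linear equations
\begin{equation*}
3X-Y=\frac{\theta'[1,1]\,\theta^{3}[1,\frac12]}{\theta^{3}[0,\frac14]\,\theta[0,\frac34]},\qquad -X+3Y=\frac{\theta'[1,1]\,\theta^{3}[1,\frac12]}{\theta[0,\frac14]\,\theta^{3}[0,\frac34]}.
\end{equation*}
This $2\times 2$ system has determinant $8$, and solving for $X$ and $Y$ produces the two formulas claimed in the theorem. The main obstacle will be the double-pole calculation: one has to keep track of all the phase factors generated by (\ref{eqn:real-char}) under the shift $z=z_{0}+w$ and apply the parity identities for $\theta'[0,\pm\epsilon']$ with the correct signs, so that the coefficient ratio emerges as $3:(-1)$ rather than its mirror image. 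Once the two linear relations are established, the remainder of the argument is elementary.
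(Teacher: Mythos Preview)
Your proposal is correct and follows exactly the approach indicated in the paper: the paper introduces the very same $\varphi$ and $\psi$ and simply says ``proved in the same way as Theorem \ref{thm:analogue-Jacobi-(1/5,1/5), (3/5,3/5)}'', and your filled-in details (the residue computations, the resulting $2\times2$ system $3X-Y=\cdots$, $-X+3Y=\cdots$, and its solution) are accurate.
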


\begin{proof}
Consider the following elliptic functions:
\begin{equation*}
\varphi(z)
=
\frac
{
\theta^3
\left[
\begin{array}{c}
1 \\
1
\end{array}
\right](z, \tau)
}
{
\theta^2
\left[
\begin{array}{c}
0 \\
\frac14
\end{array}
\right](z, \tau)
\theta
\left[
\begin{array}{c}
1 \\
\frac12
\end{array}
\right](z, \tau)
}
\,\,
\mathrm{and}
\,\,
\psi(z)=
\frac
{
\theta^3
\left[
\begin{array}{c}
1 \\
1
\end{array}
\right](z, \tau)
}
{
\theta^2
\left[
\begin{array}{c}
0 \\
\frac34
\end{array}
\right](z, \tau)
\theta
\left[
\begin{array}{c}
1 \\
-\frac12
\end{array}
\right](z, \tau)
}.
\end{equation*}
The theorem can be proved in the same way as Theorem \ref{thm:analogue-Jacobi-(1/5,1/5), (3/5,3/5)}.
\end{proof}

\subsection{Derivative formulas for $(\epsilon,\epsilon^{\prime})=(1,1/4), (1,3/4)$}

\begin{theorem}
\label{thm:analogue-Jacobi-(1,1/4), (1,3/4)}
{\it
For every $\tau\in\mathbb{H}^2,$ we have
\begin{equation*}
\frac{
\theta^{\prime}
\left[
\begin{array}{c}
1 \\
\frac14
\end{array}
\right]
}
{
\theta
\left[
\begin{array}{c}
1 \\
\frac14
\end{array}
\right]
}
=
\frac
{\theta^{\prime}
\left[
\begin{array}{c}
1 \\
1
\end{array}
\right]
\theta^3
\left[
\begin{array}{c}
1 \\
\frac12
\end{array}
\right]
\left(
\theta^2
\left[
\begin{array}{c}
1 \\
\frac14
\end{array}
\right]
-
3
\theta^2
\left[
\begin{array}{c}
1 \\
\frac34
\end{array}
\right]
\right)
}
{
8
\theta^3
\left[
\begin{array}{c}
1 \\
\frac14
\end{array}
\right]
\theta^3
\left[
\begin{array}{c}
1 \\
\frac34
\end{array}
\right]
}
\end{equation*}
and
\begin{equation*}
\frac
{
\theta^{\prime}
\left[
\begin{array}{c}
1 \\
\frac34
\end{array}
\right]
}
{
\theta
\left[
\begin{array}{c}
1 \\
\frac34
\end{array}
\right]
}
=
\frac
{
\theta^{\prime}
\left[
\begin{array}{c}
1 \\
1
\end{array}
\right]
\theta^3
\left[
\begin{array}{c}
1 \\
\frac12
\end{array}
\right]
\left(
3
\theta^2
\left[
\begin{array}{c}
1 \\
\frac14
\end{array}
\right]
-
\theta^2
\left[
\begin{array}{c}
1 \\
\frac34
\end{array}
\right]
\right)
}
{
8
\theta^3
\left[
\begin{array}{c}
1 \\
\frac14
\end{array}
\right]
\theta^3
\left[
\begin{array}{c}
1 \\
\frac34
\end{array}
\right]
}.
\end{equation*}
}
\end{theorem}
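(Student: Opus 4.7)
The plan is to apply the same residue-theorem scheme used in Theorem~\ref{thm:analogue-Jacobi-(0,1/4), (0,3/4)} (which in turn mirrors Theorem~\ref{thm:analogue-Jacobi-(1/5,1/5), (3/5,3/5)}), only shifting the first characteristic from $0$ to $1$. Introduce the two elliptic functions
\begin{equation*}
\varphi(z)=\frac{\theta^{3}\left[\begin{array}{c}1\\1\end{array}\right](z,\tau)}{\theta^{2}\left[\begin{array}{c}1\\\frac14\end{array}\right](z,\tau)\,\theta\left[\begin{array}{c}1\\\frac12\end{array}\right](z,\tau)},\qquad
\psi(z)=\frac{\theta^{3}\left[\begin{array}{c}1\\1\end{array}\right](z,\tau)}{\theta^{2}\left[\begin{array}{c}1\\\frac34\end{array}\right](z,\tau)\,\theta\left[\begin{array}{c}1\\-\frac12\end{array}\right](z,\tau)}.
\end{equation*}
Using (\ref{eqn:integer-char}), one first checks that the sums of the upper and lower characteristics in each denominator are arranged so that the automorphy factors under $z\mapsto z+1$ and $z\mapsto z+\tau$ cancel between numerator and denominator; hence $\varphi$ and $\psi$ are genuinely elliptic for the lattice $\mathbb{Z}+\mathbb{Z}\tau$.

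Next I would locate the poles in the fundamental parallelogram, using the fact that $\theta[\epsilon;\epsilon'](z,\tau)$ has its unique simple zero at $z=\frac{1-\epsilon}{2}\tau+\frac{1-\epsilon'}{2}$: the function $\varphi$ has a double pole at $z=3/8$ and a simple pole at $z=1/4$, while $\psi$ has a double pole at $z=1/8$ and a simple pole at $z=3/4$. At each pole I would expand the theta factors via the half-integer shift formula (\ref{eqn:real-char}), which converts $\theta[\epsilon;\epsilon'](z_0+w,\tau)$ into a theta function of a shifted characteristic evaluated at $w$, and then Taylor-expand at $w=0$ using the parity identities of Section~\ref{sec:properties}. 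The simple-pole residue will be expressible purely in terms of $\theta^{\prime}[1;1]$ and the theta constants $\theta[1;\tfrac12]$, $\theta[1;\tfrac14]$, $\theta[1;\tfrac34]$; the double-pole residue will carry the two unknown logarithmic derivatives $\theta^{\prime}[1;\tfrac14]/\theta[1;\tfrac14]$ and $\theta^{\prime}[1;\tfrac34]/\theta[1;\tfrac34]$, combined linearly with coefficients $\pm 1$ and $\pm 3$ that originate from differentiating the cubed numerator.

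Applying the residue theorem (Theorem~\ref{thm-fundamental-elliptic-function}) to $\varphi$ and $\psi$ separately then produces two linear equations in these two logarithmic derivatives; the resulting $2\times 2$ coefficient matrix turns out to be nonsingular, and Cramer's rule will yield the stated formulas, with the common denominator $8\,\theta^{3}[1;\tfrac14]\,\theta^{3}[1;\tfrac34]$ and the two numerator combinations $\theta^{2}[1;\tfrac14]-3\theta^{2}[1;\tfrac34]$ and $3\theta^{2}[1;\tfrac14]-\theta^{2}[1;\tfrac34]$ emerging automatically. The main technical obstacle will be the careful bookkeeping of the phase and sign factors produced by (\ref{eqn:real-char}) and the two-periodicity identity; in particular, compared with Theorem~\ref{thm:analogue-Jacobi-(0,1/4), (0,3/4)}, shifting the first characteristic from $0$ to $1$ changes the parity of several intermediate theta functions at $z=0$, and these sign flips are precisely what turns the $+3$'s of that earlier theorem into the $-3$'s appearing here.
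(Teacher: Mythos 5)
Your proposal matches the paper's proof: the paper uses exactly the two elliptic functions $\varphi$ and $\psi$ you write down and reduces the claim to the residue-theorem computation of Theorem \ref{thm:analogue-Jacobi-(1/5,1/5), (3/5,3/5)}, which is precisely the scheme (two linear relations in the two logarithmic derivatives, then solve) that you describe. Your identification of the pole locations and of the roles of the simple and double poles is also consistent with that model computation, so no further comment is needed.
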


\begin{proof}
Consider the following elliptic functions:
\begin{equation*}
\varphi(z)
=
\frac
{
\theta^3
\left[
\begin{array}{c}
1 \\
1
\end{array}
\right](z, \tau)
}
{
\theta^2
\left[
\begin{array}{c}
1 \\
\frac14
\end{array}
\right](z, \tau)
\theta
\left[
\begin{array}{c}
1 \\
\frac12
\end{array}
\right](z, \tau)
}
\,\,
\mathrm{and}
\,\,
\psi(z)=
\frac
{
\theta^3
\left[
\begin{array}{c}
1 \\
1
\end{array}
\right](z, \tau)
}
{
\theta^2
\left[
\begin{array}{c}
1 \\
\frac34
\end{array}
\right](z, \tau)
\theta
\left[
\begin{array}{c}
1 \\
-\frac12
\end{array}
\right](z, \tau)
}.
\end{equation*}
The theorem can be proved in the same way as Theorem \ref{thm:analogue-Jacobi-(1/5,1/5), (3/5,3/5)}.
\end{proof}

\subsection{Preliminary results on number theory}

\begin{theorem}
\label{thm:2squares-2triangular}
{\it
For each $n\in\mathbb{N}_0,$ set
\begin{equation*}
S_2(n)=\sharp
\left\{
(x,y)\in\mathbb{Z}^2  \, | \,
x^2+y^2=n
\right\}, \,\,
T_2(n)=\sharp
\left\{
(x,y)\in\mathbb{Z}^2  \, | \,
t_x+t_y=n
\right\}.
\end{equation*}
Then,
\begin{equation*}
S_2(2n)=S_2(n), \,\,
S_2(4n+1)=T_2(n), \,\,
S_2(4n+3)=0.
\end{equation*}
}
\end{theorem}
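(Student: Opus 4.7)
The three claims split naturally into three independent arguments, all elementary.

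The plan is to start with the easiest piece: $S_2(4n+3) = 0$ follows immediately from the observation that squares are congruent to $0$ or $1$ modulo $4$, so $x^2 + y^2 \pmod 4 \in \{0,1,2\}$, and $3$ is never attained. I would dispatch this in a single line.

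Next I would prove $S_2(2n) = S_2(n)$ by exhibiting an explicit bijection between the two solution sets. The map to use is
$$\Phi: \{(x,y) \in \Z^2 : x^2+y^2 = n\} \longrightarrow \{(u,v) \in \Z^2 : u^2+v^2 = 2n\}, \qquad \Phi(x,y) = (x+y,\, x-y),$$
which is clearly injective and lands in the right set since $(x+y)^2+(x-y)^2 = 2(x^2+y^2)$. For surjectivity, note that $u^2+v^2 = 2n$ forces $u \equiv v \pmod 2$ (otherwise the sum is odd), so $u+v$ and $u-v$ are both even, and the preimage is $((u+v)/2,(u-v)/2)$, which lies in $\Z^2$ and satisfies the defining equation. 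This is routine, and the only thing to check carefully is the parity argument that guarantees the inverse is integer-valued.

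Finally, for $S_2(4n+1) = T_2(n)$, the key trick is to rewrite $T_2(n)$ using the identity $x(x+1)/2 + y(y+1)/2 = n \iff (2x+1)^2 + (2y+1)^2 = 8n+2$. Since the map $x \mapsto 2x+1$ is a bijection $\Z \to (\text{odd integers})$, this gives
$$T_2(n) = \#\{(u,v) \in \Z^2 : u,v \text{ odd},\ u^2+v^2 = 8n+2\}.$$
But $8n+2 \equiv 2 \pmod 4$, and the only way to write a number congruent to $2 \pmod 4$ as $u^2+v^2$ is with both $u$ and $v$ odd (examining squares mod $4$ again). Hence the restriction ``$u,v$ odd'' is automatic, giving $T_2(n) = S_2(8n+2)$. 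Two applications of the doubling relation then yield $S_2(8n+2) = S_2(4n+1)$, finishing the proof.

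The main obstacle is essentially bookkeeping: one must be careful that the parameterizations $x \mapsto 2x+1$ and $(x,y) \mapsto (x+y,x-y)$ really are bijections at the level of integer points (not just up to sign or up to factor $2$), and that the mod-$4$ constraints automatically force the needed parities. There is no deeper difficulty; the result is a concrete instance of the classical parity manipulations that relate sums of squares and sums of triangular numbers.
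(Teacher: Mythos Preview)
Your argument is correct. Each of the three elementary steps (the mod-$4$ obstruction, the bijection $(x,y)\mapsto(x+y,x-y)$, and the rewriting $t_x+t_y=n\iff(2x+1)^2+(2y+1)^2=8n+2$ combined with the doubling relation) is sound, and the parity checks you flag are exactly what is needed.

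The paper, however, takes a different route: it proves all three identities simultaneously from a single theta-function identity. Using Lemma~\ref{lem:Farkas-Kra} with $[\epsilon,\epsilon']=[\delta,\delta']=[0,0]$ gives
\[
\theta^2\!\left[\begin{array}{c}0\\0\end{array}\right](0,\tau)
=\theta^2\!\left[\begin{array}{c}0\\0\end{array}\right](0,2\tau)
+\theta^2\!\left[\begin{array}{c}1\\0\end{array}\right](0,2\tau),
\]
which, read as an identity of $q$-series, becomes
\[
\sum_{n\ge0}S_2(n)\,x^n=\sum_{n\ge0}S_2(n)\,x^{2n}+\sum_{n\ge0}T_2(n)\,x^{4n+1},
\]
and comparing coefficients of $x^{2n}$, $x^{4n+1}$, $x^{4n+3}$ yields the three claims at once. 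Your approach is more elementary and self-contained, requiring no theta machinery and providing explicit bijections; the paper's approach is more unified and sits naturally inside the theta-constant framework that drives the rest of the article.
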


\begin{proof}
Set $x=\exp(\pi i \tau).$
By Lemma \ref{lem:Farkas-Kra},
we have
\begin{equation*}
\theta^2
\left[
\begin{array}{c}
0 \\
0
\end{array}
\right](0,\tau)
=
\theta^2
\left[
\begin{array}{c}
0 \\
0
\end{array}
\right](0,2\tau)
+
\theta^2
\left[
\begin{array}{c}
1 \\
0
\end{array}
\right](0,2\tau).
\end{equation*}
Therefore, it follows that
\begin{equation*}
\left(
\sum_{n\in\mathbb{Z}} x^{n^2}
\right)^2
=
\left(
\sum_{n\in\mathbb{Z}} x^{2n^2}
\right)^2
+
\left(
\sum_{n\in\mathbb{Z}} x^{2(n+\frac12)^2}
\right)^2
=
\left(
\sum_{n\in\mathbb{Z}} x^{2n^2}
\right)^2
+
\left(
x^{\frac12}
\sum_{n\in\mathbb{Z}} x^{4\frac{n(n+1)}{2}}
\right)^2,
\end{equation*}
which implies that
\begin{equation*}
\sum_{n=0}^{\infty} S_2(n) x^n
=
\sum_{n=0}^{\infty} S_2(n) x^{2n}
+
\sum_{n=0}^{\infty} T_2(n) x^{4n+1}.
\end{equation*}
The theorem can be obtained by
comparing the coefficients of the terms $x^{2n}, \, x^{4n+1}, \, x^{4n+3}.$
\end{proof}

\begin{lemma}
\label{lem:square-triangular}
{\it
For
every $\tau\in\mathbb{H}^2,$
we have
\begin{equation*}
\theta
\left[
\begin{array}{c}
0 \\
0
\end{array}
\right](0,\tau)
=
\theta
\left[
\begin{array}{c}
0 \\
0
\end{array}
\right](0,4\tau)
+
\theta
\left[
\begin{array}{c}
1 \\
0
\end{array}
\right](0,4\tau).
\end{equation*}
}
\end{lemma}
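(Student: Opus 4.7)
The plan is to prove the identity by direct manipulation of the $q$-series, splitting the sum defining $\theta\bigl[\begin{smallmatrix}0\\0\end{smallmatrix}\bigr](0,\tau)$ according to the parity of the summation index. Setting $x=\exp(\pi i\tau)$, the definition of the theta function with characteristics gives
\begin{equation*}
\theta\left[\begin{array}{c}0\\0\end{array}\right](0,\tau)=\sum_{n\in\mathbb{Z}} x^{n^2},
\end{equation*}
and I would split this as $\sum_{n\,\mathrm{even}}x^{n^2}+\sum_{n\,\mathrm{odd}}x^{n^2}$, i.e., write $n=2k$ or $n=2k+1$.

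Next, I would identify each piece with a theta constant evaluated at $4\tau$. From the series representation with $\tau$ replaced by $4\tau$, one has
\begin{equation*}
\theta\left[\begin{array}{c}0\\0\end{array}\right](0,4\tau)=\sum_{k\in\mathbb{Z}}x^{4k^2}=\sum_{n\,\mathrm{even}}x^{n^2},
\end{equation*}
and
\begin{equation*}
\theta\left[\begin{array}{c}1\\0\end{array}\right](0,4\tau)=\sum_{k\in\mathbb{Z}}\exp\!\Bigl(2\pi i\cdot\tfrac{1}{2}(k+\tfrac{1}{2})^2\cdot 4\tau\Bigr)=\sum_{k\in\mathbb{Z}}x^{(2k+1)^2}=\sum_{n\,\mathrm{odd}}x^{n^2}.
\end{equation*}
Combining these two identifications reproduces exactly the partition of $\sum_{n\in\mathbb{Z}}x^{n^2}$ into even and odd indices, which is the claimed lemma.

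Since the argument is purely a rearrangement of an absolutely convergent series on $\mathbb{H}^2$, there is no analytic obstacle; the only thing to watch is the normalization convention in the definition of $\theta\bigl[\begin{smallmatrix}\epsilon\\\epsilon'\end{smallmatrix}\bigr]$, where the factor of $\tfrac{1}{2}$ inside the exponential means that $\theta\bigl[\begin{smallmatrix}1\\0\end{smallmatrix}\bigr](0,4\tau)$ contributes exactly the odd squares in the variable $x=\exp(\pi i\tau)$, not something like $x^{(2k+1)^2/2}$. Carrying out this bookkeeping is the only step to check carefully; everything else is an immediate rewriting.
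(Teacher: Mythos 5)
Your proof is correct and is essentially identical to the paper's own argument: both split $\sum_{n\in\mathbb{Z}}x^{n^2}$ by parity of $n$ and identify the even part with $\theta\left[\begin{smallmatrix}0\\0\end{smallmatrix}\right](0,4\tau)$ and the odd part with $\theta\left[\begin{smallmatrix}1\\0\end{smallmatrix}\right](0,4\tau)=\sum_{k}x^{4(k+\frac12)^2}$. The normalization check you flag is exactly the right one, and it works out as you state.
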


\begin{proof}
Set $x=\exp(\pi i \tau).$
From the definition,
it follows that
\begin{align*}
\theta
\left[
\begin{array}{c}
0 \\
0
\end{array}
\right](0,\tau)
=&
\sum_{n\in\mathbb{Z}} x^{n^2}
=
\sum_{n\in\mathbb{Z}} x^{(2n)^2}+ \sum_{n\in\mathbb{Z}} x^{(2n+1)^2} \\
=&
\sum_{n\in\mathbb{Z}} x^{4n^2}+ \sum_{n\in\mathbb{Z}} x^{4(n+\frac12)^2}
=
\theta
\left[
\begin{array}{c}
0 \\
0
\end{array}
\right](0,4\tau)
+
\theta
\left[
\begin{array}{c}
1 \\
0
\end{array}
\right](0,4\tau).
\end{align*}
\end{proof}

\begin{theorem}
\label{thm:s12-t12}
{\it
For fixed positive integers $a,b$ and each $n\in\mathbb{N}_0,$
set
\begin{align*}
S_{a,b} (n)=&\,
\sharp
\left\{
(x,y)\in\mathbb{Z}^2 \, | \,
ax^2+by^2=n
\right\}, \,\,
T_{a,b} (n)= \,
\sharp
\left\{
(x,y)\in\mathbb{Z}^2 \, | \,
at_x+b t_y=n
\right\}, \\
M_{a\textrm{-}b} (n)=& \,
\sharp
\left\{
(x,y)\in\mathbb{Z}^2 \, | \,
ax^2+b t_y=n
\right\}.
\end{align*}
Then,
\begin{align*}
&
S_{1,2}(8n+1)=M_{1\textrm{-}1} (n), \,
S_{1,2}(8n+3)=T_{1,2} (n), \,
S_{1,2}(8n+5)=S_{1,2}(8n+7)=0,  \\
&
S_{1,2}(4n)=S_{1,2}(n), \,
S_{1,2}(4n+2)=M_{1\textrm{-}4} (n).
\end{align*}
}
\end{theorem}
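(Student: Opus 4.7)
The plan is to imitate the proof of Theorem \ref{thm:2squares-2triangular} by splitting the generating function of $S_{1,2}$ according to residue classes modulo $8$. Set $x=\exp(\pi i\tau)$; then
\[
\theta\left[\begin{array}{c}0\\0\end{array}\right](0,\tau)\,\theta\left[\begin{array}{c}0\\0\end{array}\right](0,2\tau)
=\sum_{n\ge 0}S_{1,2}(n)\,x^{n}.
\]
First I would apply Lemma \ref{lem:square-triangular} to the first factor, and to the second factor after replacing $\tau$ by $2\tau$, to obtain
\[
\theta\left[\begin{array}{c}0\\0\end{array}\right](0,\tau)=\theta\left[\begin{array}{c}0\\0\end{array}\right](0,4\tau)+\theta\left[\begin{array}{c}1\\0\end{array}\right](0,4\tau),
\]
\[
\theta\left[\begin{array}{c}0\\0\end{array}\right](0,2\tau)=\theta\left[\begin{array}{c}0\\0\end{array}\right](0,8\tau)+\theta\left[\begin{array}{c}1\\0\end{array}\right](0,8\tau),
\]
so that multiplying these out decomposes the generating function into four summands.

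Next I would analyze the $x$-exponents of each product using the key identity $(2m+1)^{2}=8t_{m}+1$. The four products carry exponents of the form $4m^{2}+8n^{2}$, $(2m+1)^{2}+8n^{2}=8(t_{m}+n^{2})+1$, $4m^{2}+2(2n+1)^{2}=4m^{2}+16t_{n}+2$, and $(2m+1)^{2}+2(2n+1)^{2}=8t_{m}+16t_{n}+3$, which lie in the disjoint residue classes $0\pmod 4$, $1\pmod 8$, $2\pmod 4$, and $3\pmod 8$ respectively. Since no summand contributes to $x^{8n+5}$ or $x^{8n+7}$, this already yields $S_{1,2}(8n+5)=S_{1,2}(8n+7)=0$.

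Finally I would identify each product as the generating function of the appropriate representation count and extract the remaining identities. The first product, $\theta\left[\begin{array}{c}0\\0\end{array}\right](0,4\tau)\theta\left[\begin{array}{c}0\\0\end{array}\right](0,8\tau)$, is just the original generating function with $\tau$ replaced by $4\tau$, hence equals $\sum_{k}S_{1,2}(k)x^{4k}$, giving $S_{1,2}(4n)=S_{1,2}(n)$. The remaining three products factor respectively as $x\sum_{k}M_{1\textrm{-}1}(k)x^{8k}$, $x^{2}\sum_{k}M_{1\textrm{-}4}(k)x^{4k}$, and $x^{3}\sum_{k}T_{1,2}(k)x^{8k}$, after using the same $(2m+1)^{2}=8t_{m}+1$ rewriting, and matching coefficients yields the remaining three relations. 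The proof is essentially bookkeeping once the decomposition is in place; the only point requiring genuine attention is checking that the four resulting arithmetic progressions really do exhaust all residues modulo $8$ except for $5$ and $7$, so that $S_{1,2}(8n+5)=S_{1,2}(8n+7)=0$ is a consequence rather than an assumption.
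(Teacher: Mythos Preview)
Your proposal is correct and follows essentially the same approach as the paper: both apply Lemma~\ref{lem:square-triangular} to each factor of $\theta\left[\begin{smallmatrix}0\\0\end{smallmatrix}\right](0,\tau)\,\theta\left[\begin{smallmatrix}0\\0\end{smallmatrix}\right](0,2\tau)$, expand into four products, identify each as a generating function supported on a single residue class modulo $4$ or $8$, and compare coefficients. Your explicit use of $(2m+1)^2=8t_m+1$ is exactly the computation the paper carries out when it rewrites $\theta\left[\begin{smallmatrix}1\\0\end{smallmatrix}\right](0,4\tau)$ and $\theta\left[\begin{smallmatrix}1\\0\end{smallmatrix}\right](0,8\tau)$ as triangular-number series.
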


\begin{proof}
Set $x=\exp(\pi i \tau).$
From the definition, it follows that
\begin{equation*}
\theta
\left[
\begin{array}{c}
0 \\
0
\end{array}
\right](0,\tau)
\theta
\left[
\begin{array}{c}
0 \\
0
\end{array}
\right](0,2\tau)
=
\left(
\sum_{m\in\mathbb{Z}} x^{m^2}
\right)
\left(
\sum_{n\in\mathbb{Z}} x^{2n^2}
\right)
=
\sum_{n=0}^{\infty} S_{1,2}(n) x^n.
\end{equation*}
\par
From Lemma \ref{lem:square-triangular},
it follows that
\begin{align*}
&
\theta
\left[
\begin{array}{c}
0 \\
0
\end{array}
\right](0,\tau)
\theta
\left[
\begin{array}{c}
0 \\
0
\end{array}
\right](0,2\tau)  \\
=&
\left(
\theta
\left[
\begin{array}{c}
0 \\
0
\end{array}
\right](0,4\tau)
+
\theta
\left[
\begin{array}{c}
1 \\
0
\end{array}
\right](0,4\tau)
\right)
\left(
\theta
\left[
\begin{array}{c}
0 \\
0
\end{array}
\right](0,8\tau)
+
\theta
\left[
\begin{array}{c}
1 \\
0
\end{array}
\right](0,8\tau)
\right)  \\
=&
\theta
\left[
\begin{array}{c}
0 \\
0
\end{array}
\right](0,4\tau)
\theta
\left[
\begin{array}{c}
0 \\
0
\end{array}
\right](0,8\tau)
+
\theta
\left[
\begin{array}{c}
0 \\
0
\end{array}
\right](0,4\tau)
\theta
\left[
\begin{array}{c}
1 \\
0
\end{array}
\right](0,8\tau)  \\
&+
\theta
\left[
\begin{array}{c}
0 \\
0
\end{array}
\right](0,8\tau)
\theta
\left[
\begin{array}{c}
1 \\
0
\end{array}
\right](0,4\tau)
+
\theta
\left[
\begin{array}{c}
1 \\
0
\end{array}
\right](0,4\tau)
\theta
\left[
\begin{array}{c}
1 \\
0
\end{array}
\right](0,8\tau) \\
=&
\left(
\sum_{m\in\mathbb{Z}} x^{4m^2}
\right)
\left(
\sum_{n\in\mathbb{Z}} x^{8n^2}
\right)
+
\left(
\sum_{m\in\mathbb{Z}} x^{4m^2}
\right)
\left(
x^2
\sum_{n\in\mathbb{Z}} x^{16\cdot\frac{n(n+1)}{2}}
\right)  \\
&+
\left(
\sum_{m\in\mathbb{Z}} x^{8m^2}
\right)
\left(
x
\sum_{n\in\mathbb{Z}} x^{8 \cdot \frac{n(n+1)}{2}}
\right)
+
\left(
x
\sum_{m\in\mathbb{Z}} x^{8 \cdot \frac{m(m+1)}{2}}
\right)
\left(
x^2
\sum_{n\in\mathbb{Z}} x^{16\cdot\frac{n(n+1)}{2}}
\right) \\
=&
\sum_{n=0}^{\infty} S_{1,2}(n) x^{4n}
+
\sum_{n=0}^{\infty} M_{1\textrm{-}4}(n) x^{4n+2}
+
\sum_{n=0}^{\infty} M_{1\textrm{-}1}(n) x^{8n+1}
+
\sum_{n=0}^{\infty} T_{1,2}(n) x^{8n+3}.
\end{align*}
\par
Thus, we have
\begin{equation*}
\sum_{n=0}^{\infty} S_{1,2}(n) x^n
=
\sum_{n=0}^{\infty} S_{1,2}(n) x^{4n}
+
\sum_{n=0}^{\infty} M_{1\textrm{-}4}(n) x^{4n+2}
+
\sum_{n=0}^{\infty} M_{1\textrm{-}1}(n) x^{8n+1}
+
\sum_{n=0}^{\infty} T_{1,2}(n) x^{8n+3}.
\end{equation*}
The theorem can be obtained by comparing the coefficients.
\end{proof}

Using Eqs. (\ref{eqn:2-squares}) and (\ref{eqn:1,2-squares}),
we show the following propositions:

\begin{proposition}
\label{prop:0,1/4-pm-3/4}
{\it
For every $\tau\in\mathbb{H}^2,$ we have
\begin{equation}
\label{eqn:log-diff-0-1/4-3/4(1)}
\frac
{
\theta^{\prime}
\left[
\begin{array}{c}
0 \\
\frac14
\end{array}
\right](0,\tau)
}
{
\theta
\left[
\begin{array}{c}
0 \\
\frac14
\end{array}
\right](0,\tau)
}
-
\frac
{
\theta^{\prime}
\left[
\begin{array}{c}
0 \\
\frac34
\end{array}
\right](0,\tau)
}
{
\theta
\left[
\begin{array}{c}
0 \\
\frac34
\end{array}
\right](0,\tau)
}
=
2\pi
\theta^2
\left[
\begin{array}{c}
1 \\
0
\end{array}
\right](0,4\tau)
\end{equation}
and
\begin{equation}
\label{eqn:log-diff-0-1/4-3/4(2)}
\frac
{
\theta^{\prime}
\left[
\begin{array}{c}
0 \\
\frac14
\end{array}
\right](0,\tau)
}
{
\theta
\left[
\begin{array}{c}
0 \\
\frac14
\end{array}
\right](0,\tau)
}
+
\frac
{
\theta^{\prime}
\left[
\begin{array}{c}
0 \\
\frac34
\end{array}
\right](0,\tau)
}
{
\theta
\left[
\begin{array}{c}
0 \\
\frac34
\end{array}
\right](0,\tau)
}
=
-2\sqrt{2}\pi
\theta
\left[
\begin{array}{c}
1 \\
0
\end{array}
\right](0,4\tau)
\theta
\left[
\begin{array}{c}
0 \\
0
\end{array}
\right](0,2\tau).
\end{equation}
}
\end{proposition}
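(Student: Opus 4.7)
The plan is to expand both sides as $q$-series in $q=\exp(\pi i\tau)$ and match them via the arithmetical formulas (\ref{eqn:2-squares}) and (\ref{eqn:1,2-squares}). First, I take the logarithmic $\zeta$-derivative of Jacobi's triple product (\ref{eqn:Jacobi-triple}) with $\epsilon=0$ and evaluate at $\zeta=0$. Pairing the conjugate factors $e^{\pm\pi i\epsilon'}$ and using $e^{\pi i\epsilon'}-e^{-\pi i\epsilon'}=i\sqrt{2}$ for $\epsilon'\in\{1/4,3/4\}$ together with $2\cos(\pi/4)=\sqrt{2}$ and $2\cos(3\pi/4)=-\sqrt{2}$ yields the compact series
\[
\frac{\theta'\left[\begin{array}{c}0\\ \epsilon'\end{array}\right](0,\tau)}{\theta\left[\begin{array}{c}0\\ \epsilon'\end{array}\right](0,\tau)}
=-2\sqrt{2}\,\pi\sum_{n\ge 1}\frac{q^{2n-1}}{1+2\cos(\pi\epsilon')\,q^{2n-1}+q^{4n-2}}.
\]

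The pivotal algebraic identity is $(1+q^{4n-2})^2-2q^{4n-2}=1+q^{8n-4}$, which collapses the combined denominator after taking sum and difference. The difference reduces to $8\pi\sum_{m\ge 1,\,m\textrm{ odd}}q^{2m}/(1+q^{4m})$ and the sum to $-4\sqrt{2}\,\pi\sum_{m\textrm{ odd}}(q^{m}+q^{3m})/(1+q^{4m})$. Expanding the geometric series and reorganizing by the total exponent $N=jm$ (with $j$ odd) converts these into divisor sums: the difference equals $8\pi\sum_{N\textrm{ odd}}(d_{1,4}(N)-d_{3,4}(N))q^{2N}=2\pi\sum_{N\textrm{ odd}}S_2(N)q^{2N}$ by (\ref{eqn:2-squares}), and the sum equals $-4\sqrt{2}\,\pi\sum_{N\textrm{ odd}}(d_{1,8}(N)+d_{3,8}(N)-d_{5,8}(N)-d_{7,8}(N))q^{N}=-2\sqrt{2}\,\pi\sum_{N\textrm{ odd}}S_{1,2}(N)q^{N}$ by (\ref{eqn:1,2-squares}).

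Next, I match these with the right-hand sides. For (\ref{eqn:log-diff-0-1/4-3/4(1)}), I expand $\theta^2\left[\begin{array}{c}1\\0\end{array}\right](0,4\tau)=\sum_{j,k\textrm{ odd}}q^{j^2+k^2}$ and substitute $j=2a+1,\,k=2b+1$ to get $j^2+k^2=8(t_a+t_b)+2$, so that this equals $\sum_M T_2(M)q^{8M+2}$; Theorem~\ref{thm:2squares-2triangular} ($S_2(4M+3)=0$ and $S_2(4M+1)=T_2(M)$) then matches it with $\sum_{N\textrm{ odd}}S_2(N)q^{2N}$. For (\ref{eqn:log-diff-0-1/4-3/4(2)}), I expand $\theta\left[\begin{array}{c}1\\0\end{array}\right](0,4\tau)\,\theta\left[\begin{array}{c}0\\0\end{array}\right](0,2\tau)=\sum_{k\textrm{ odd},\,m\in\mathbb{Z}}q^{k^2+2m^2}$ and split on the parity of $m$ to obtain $\sum_K M_{1\textrm{-}1}(K)q^{8K+1}+\sum_K T_{1,2}(K)q^{8K+3}$; Theorem~\ref{thm:s12-t12} then identifies this with $\sum_{N\textrm{ odd}}S_{1,2}(N)q^{N}$.

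The main obstacle is the combinatorial bookkeeping of these $q$-series manipulations---in particular, the reorganization of the rational-function form of the logarithmic derivatives into divisor sums indexed by residues modulo $8$, and the verification that the vanishing residue classes on the two sides agree. Once the collapsing identity $(1+q^{4n-2})^2-2q^{4n-2}=1+q^{8n-4}$ is noticed, the remaining work reduces to a systematic application of (\ref{eqn:2-squares}), (\ref{eqn:1,2-squares}), Theorem~\ref{thm:2squares-2triangular}, and Theorem~\ref{thm:s12-t12}.
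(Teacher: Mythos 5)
Your proposal is correct and follows essentially the same route as the paper: both extract a Lambert-type series for the logarithmic derivatives from Jacobi's triple product, reduce the sum and difference to the divisor sums $d_{1,4}-d_{3,4}$ and $d_{1,8}+d_{3,8}-d_{5,8}-d_{7,8}$, convert these via (\ref{eqn:2-squares}), (\ref{eqn:1,2-squares}), Theorem \ref{thm:2squares-2triangular}, and Theorem \ref{thm:s12-t12} into $T_2$, $M_{1\textrm{-}1}$, and $T_{1,2}$, and identify the results with the stated theta products. The only difference is cosmetic: you collapse the combined denominators with the algebraic identity $(1+q^{4n-2})^2-2q^{4n-2}=1+q^{8n-4}$ where the paper uses the equivalent trigonometric sum-to-product manipulations, and you verify the final theta identifications by direct series expansion rather than citing Lemma \ref{lem:square-triangular}.
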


\begin{proof}
Set $x=\exp(\pi i \tau)$ and  $q=x^2=\exp(2\pi i \tau).$
Jacobi's triple product identity (\ref{eqn:Jacobi-triple}) yields
\begin{align*}
\frac{
\theta^{\prime}
\left[
\begin{array}{c}
0 \\
\frac14
\end{array}
\right](0,\tau)
}
{
\theta
\left[
\begin{array}{c}
0 \\
\frac14
\end{array}
\right](0,\tau)
}
=&
-4\pi
\sum_{m,n=1}^{\infty} \sin \left( \frac{3\pi m}{4} \right) x^{n(2m-1)},    \\
\frac{
\theta^{\prime}
\left[
\begin{array}{c}
0 \\
\frac34
\end{array}
\right](0,\tau)
}
{
\theta
\left[
\begin{array}{c}
0 \\
\frac34
\end{array}
\right](0,\tau)
}
=&
-4\pi
\sum_{m,n=1}^{\infty} \sin \left( \frac{\pi m}{4} \right) x^{n(2m-1)}.
\end{align*}
\par
We first treat Eq. (\ref{eqn:log-diff-0-1/4-3/4(1)}).
For this purpose, we have
\begin{align*}
\frac{
\theta^{\prime}
\left[
\begin{array}{c}
0 \\
\frac14
\end{array}
\right](0,\tau)
}
{
\theta
\left[
\begin{array}{c}
0 \\
\frac14
\end{array}
\right](0,\tau)
}
-
\frac{
\theta^{\prime}
\left[
\begin{array}{c}
0 \\
\frac34
\end{array}
\right](0,\tau)
}
{
\theta
\left[
\begin{array}{c}
0 \\
\frac34
\end{array}
\right](0,\tau)
}
=&
-4\pi
\sum_{m,n=1}^{\infty}
\left(
\sin \left( \frac{3\pi m}{4} \right)
-
\sin \left( \frac{\pi m}{4}  \right)
\right)
x^{m(2n-1)}
\\
=&
-8\pi
\sum_{m,n=1}^{\infty}
\cos \left(\frac{\pi m}{2} \right) \sin \left(\frac{\pi m}{4} \right)
x^{m(2n-1)} \\
=&
-8\pi
\sum_{m,n=1}^{\infty}
\cos \left(\pi m \right) \sin \left(\frac{\pi m}{2} \right)
q^{m(2n-1)} \\
=&
8\pi
\sum_{N=0}^{\infty}
\left( d_{1,4}(2N+1)-d_{3,4}(2N+1) \right)
q^{2N+1}   \\
=&
2\pi
\sum_{N=0}^{\infty} S_2(4N+1) q^{4N+1}.
\end{align*}
By Theorem \ref{thm:2squares-2triangular}, we have
\begin{equation*}
\frac{
\theta^{\prime}
\left[
\begin{array}{c}
0 \\
\frac14
\end{array}
\right](0,\tau)
}
{
\theta
\left[
\begin{array}{c}
0 \\
\frac14
\end{array}
\right](0,\tau)
}
-
\frac{
\theta^{\prime}
\left[
\begin{array}{c}
0 \\
\frac34
\end{array}
\right](0,\tau)
}
{
\theta
\left[
\begin{array}{c}
0 \\
\frac34
\end{array}
\right](0,\tau)
}
=
2\pi
\sum_{N=0}^{\infty} T_2(N) q^{4N+1}
=
2\pi
\theta^2
\left[
\begin{array}{c}
1 \\
0
\end{array}
\right](0,4\tau).
\end{equation*}
\par
We then next deal with Eq. (\ref{eqn:log-diff-0-1/4-3/4(2)}).
For this purpose, we have
\begin{align*}
&
\frac{
\theta^{\prime}
\left[
\begin{array}{c}
0 \\
\frac14
\end{array}
\right](0,\tau)
}
{
\theta
\left[
\begin{array}{c}
0 \\
\frac14
\end{array}
\right](0,\tau)
}
+
\frac{
\theta^{\prime}
\left[
\begin{array}{c}
0 \\
\frac34
\end{array}
\right](0,\tau)
}
{
\theta
\left[
\begin{array}{c}
0 \\
\frac34
\end{array}
\right](0,\tau)
}  \\
=&
-4\pi
\sum_{m,n=1}^{\infty}
\left(
\sin \left( \frac{3\pi m}{4} \right)
+
\sin \left( \frac{\pi m}{4}  \right)
\right)
x^{m(2n-1)}
=
-8\pi
\sum_{m,n=1}^{\infty}
\sin \left( \frac{\pi m}{2} \right)
\cos \left( \frac{\pi m}{4}  \right)
x^{m(2n-1)}\\
=&
-4\sqrt{2} \pi
\sum_{N=0}^{\infty}
\left(
d_{1,8}(2N+1)
+
d_{3,8}(2N+1)
-
d_{5,8}(2N+1)
-
d_{7,8}(2N+1)
\right)
x^{2N+1}  \\
=&
-2\sqrt{2} \pi
\sum_{N=0}^{\infty}
S_{1,2}(2N+1)
x^{2N+1}.
\end{align*}
By Theorem \ref{thm:s12-t12},
we obtain
\begin{align*}
&
\frac{
\theta^{\prime}
\left[
\begin{array}{c}
0 \\
\frac14
\end{array}
\right](0,\tau)
}
{
\theta
\left[
\begin{array}{c}
0 \\
\frac14
\end{array}
\right](0,\tau)
}
+
\frac{
\theta^{\prime}
\left[
\begin{array}{c}
0 \\
\frac34
\end{array}
\right](0,\tau)
}
{
\theta
\left[
\begin{array}{c}
0 \\
\frac34
\end{array}
\right](0,\tau)
} \\
=&
-2\sqrt{2} \pi
\left\{
\sum_{N=0}^{\infty}
S_{1,2}(8N+1)
x^{8N+1}
+
\sum_{N=0}^{\infty}
S_{1,2}(8N+3)
x^{8N+3}
\right\}   \\
=&
-2\sqrt{2} \pi
\left\{
\sum_{N=0}^{\infty}
M_{1\textrm{-}1}(N)
x^{8N+1}
+
\sum_{N=0}^{\infty}
T_{1,2}(N)
x^{8N+3}
\right\} \\
=&
-2\sqrt{2} \pi
\left\{
\theta
\left[
\begin{array}{c}
0\\
0
\end{array}
\right](0,8\tau)
\theta
\left[
\begin{array}{c}
1\\
0
\end{array}
\right](0,4\tau)
+
\theta
\left[
\begin{array}{c}
1\\
0
\end{array}
\right](0,4\tau)
\theta
\left[
\begin{array}{c}
1\\
0
\end{array}
\right](0,8\tau)
\right\}   \\
=&
-2\sqrt{2} \pi
\theta
\left[
\begin{array}{c}
1\\
0
\end{array}
\right](0,4\tau)
\left\{
\theta
\left[
\begin{array}{c}
0\\
0
\end{array}
\right](0,8\tau)
+
\theta
\left[
\begin{array}{c}
1\\
0
\end{array}
\right](0,8\tau)
\right\}   \\
=&
-2\sqrt{2} \pi
\theta
\left[
\begin{array}{c}
1\\
0
\end{array}
\right](0,4\tau)
\theta
\left[
\begin{array}{c}
0\\
0
\end{array}
\right](0,2\tau).
\end{align*}
\end{proof}

\subsection{Another derivative formulas for $(\epsilon, \epsilon^{\prime})=(0,1/4), (0,3/4)$ }
\begin{theorem}
\label{thm:analogue-Jacobi-0-1/4,3/4}
{\it
For every $\tau\in\mathbb{H}^2,$
we have
\begin{equation*}
\theta^{\prime}
\left[
\begin{array}{c}
0 \\
\frac14
\end{array}
\right](0,\tau)
=
-
\pi
\theta
\left[
\begin{array}{c}
0 \\
\frac14
\end{array}
\right](0,\tau)
\theta
\left[
\begin{array}{c}
1 \\
0
\end{array}
\right](0,4\tau)
\left\{
\sqrt{2}
\theta
\left[
\begin{array}{c}
0 \\
0
\end{array}
\right](0,2\tau)
-
\theta
\left[
\begin{array}{c}
1 \\
0
\end{array}
\right](0,4\tau)
\right\}
\end{equation*}
and
\begin{equation*}
\theta^{\prime}
\left[
\begin{array}{c}
0 \\
\frac34
\end{array}
\right](0,\tau)
=
-
\pi
\theta
\left[
\begin{array}{c}
0 \\
\frac34
\end{array}
\right](0,\tau)
\theta
\left[
\begin{array}{c}
1 \\
0
\end{array}
\right](0,4\tau)
\left\{
\sqrt{2}
\theta
\left[
\begin{array}{c}
0 \\
0
\end{array}
\right](0,2\tau)
+
\theta
\left[
\begin{array}{c}
1 \\
0
\end{array}
\right](0,4\tau)
\right\}.
\end{equation*}
}
\end{theorem}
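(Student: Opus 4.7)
The plan is to derive Theorem \ref{thm:analogue-Jacobi-0-1/4,3/4} directly from Proposition \ref{prop:0,1/4-pm-3/4} by a simple linear combination of the two identities proven there. Denote for brevity
$$A = \frac{\theta^{\prime}\left[\begin{array}{c} 0 \\ \frac14 \end{array}\right]}{\theta\left[\begin{array}{c} 0 \\ \frac14 \end{array}\right]}, \quad B = \frac{\theta^{\prime}\left[\begin{array}{c} 0 \\ \frac34 \end{array}\right]}{\theta\left[\begin{array}{c} 0 \\ \frac34 \end{array}\right]}, \quad U = \theta\left[\begin{array}{c} 1 \\ 0 \end{array}\right](0,4\tau), \quad V = \theta\left[\begin{array}{c} 0 \\ 0 \end{array}\right](0,2\tau).$$
Proposition \ref{prop:0,1/4-pm-3/4} gives $A - B = 2\pi U^{2}$ and $A + B = -2\sqrt{2}\pi U V$.

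First I would add these two equations to get $2A = 2\pi U^{2} - 2\sqrt{2}\pi U V$, which simplifies to
$$A = -\pi U\bigl\{\sqrt{2}\, V - U\bigr\}.$$
Multiplying both sides by $\theta\left[\begin{array}{c} 0 \\ \frac14 \end{array}\right](0,\tau)$ yields the first claimed formula exactly in the form stated.

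Next I would subtract the first identity of Proposition \ref{prop:0,1/4-pm-3/4} from the second to obtain $2B = -2\sqrt{2}\pi U V - 2\pi U^{2}$, that is
$$B = -\pi U\bigl\{\sqrt{2}\, V + U\bigr\},$$
and multiplying by $\theta\left[\begin{array}{c} 0 \\ \frac34 \end{array}\right](0,\tau)$ yields the second formula.

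Since Proposition \ref{prop:0,1/4-pm-3/4} has already been established (using the arithmetic machinery of Theorem \ref{thm:2squares-2triangular}, Theorem \ref{thm:s12-t12} and Jacobi's triple product identity), no genuine obstacle remains here; the theorem is a purely algebraic consequence. The only point that requires a moment's care is the correct choice of signs, so I would double-check the $\pm$ in each bracket against the signs of the $U^2$ terms after adding and subtracting. Beyond that bookkeeping, the derivation is automatic.
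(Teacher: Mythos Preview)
Your proposal is correct and is exactly the approach the paper takes: the paper's own proof consists of the single line ``The theorem follows from Proposition \ref{prop:0,1/4-pm-3/4},'' and you have simply written out the (trivial) linear-combination algebra behind that sentence.
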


\begin{proof}
The theorem follows from Proposition \ref{prop:0,1/4-pm-3/4}.
\end{proof}

\end{document}